\newcommand{\thh}{\ensuremath{^\text{th}}}
\newcommand{\Z}{\ensuremath{\mathbb{Z}}}
\theoremstyle{definition}
\newtheorem{defn}{Definition}
\newtheorem{lem}{Lemma}
\newtheorem{remark}{Remark}
\newtheorem{prop}{Proposition}
\newtheorem{experiment}{Experiment}
\DeclareMathOperator{\std}{std}
\title{On the Convergence of the 
Iterative Shrinkage/Thresholding Algorithm With a Weakly Convex Penalty}
\author{\.Ilker Bayram 
 \thanks{ \.{I}. Bayram is with the Dept. of Electronics and Telecommunications Engineering, Istanbul Technical University, Istanbul, Turkey (e-mail~:~ibayram@itu.edu.tr). }
}
\date{}
\begin{document}
\maketitle
\begin{abstract}
We consider the iterative shrinkage/thresholding algorithm (ISTA) applied to a cost function composed of a data fidelity term and a penalty term. The penalty is non-convex but the concavity of the penalty is accounted for by the data fidelity term so that the overall cost function is convex. We provide a generalization of the convergence result for ISTA viewed as a forward-backward splitting algorithm.
We also demonstrate experimentally that for the current setup, using large stepsizes in ISTA can accelerate convergence more than existing schemes proposed for the convex case, like TwIST or FISTA.
\end{abstract}

\section{Introduction}

The iterative shrinkage/thresholding algorithm (ISTA) is widely used for solving  minimization problems of the form
\begin{equation}\label{eqn:C}
\min_x\,\Bigl\{C(x) = \frac{1}{2}\,\|y - H\,x\|_2^2 + P(x) \Bigr\}
\end{equation}
where $y$ is an observed signal, $H$ is a linear operator, and $P(x)$ is a penalty term reflecting our prior knowledge about the object to be recovered \cite{dau04p413,fig03p906,DeMol}. 
For minimizing $C(x)$, ISTA employs iterations of the form
\begin{equation}\label{eqn:ISTA}
x^{k+1} = T_{\alpha}\,\Bigl( x^k + \alpha\,H^T\,(y - H\,x^k)\Bigr),
\end{equation}
where $T_{\alpha}$ is the shrinkage/thresholding operator associated with $P$, defined as,
\begin{equation}\label{eqn:T}
T_{\alpha}(z) = \arg \min_x\, \frac{1}{2\alpha}\| x - z\|_2^2 + P(x).
\end{equation}
When $P(x)$ is convex, $T$ is also referred to as the proximity operator of $P$ \cite{combettes_chp}. We remark that if the minimization problem in \eqref{eqn:T} is strictly convex, then it has a unique solution. In that case, $T_{\alpha}$ will be well-defined. In the following, we denote the least and greatest eigenvalue of $H^T\,H$ with $\sigma_m$ and $\sigma_M$ respectively. We also assume throughout the paper that the functions of interest are proper, lower semi-continuous and $C(x)$ has a non-empty set of minimizers. 

For convex $P$, ISTA can be derived through different approaches. The majorization-minimization (MM) framework \cite{fig07p980,hun04p30, Lange}, enforces the constraint $\alpha < 1/\sigma_M$, and implies that the iterates achieve monotone descent in the cost. The descent property achieved by MM does not directly imply that the algorithm converges. However, the following proposition can be shown by making use of this descent property \cite{byr14p681} and interpreting the algorithm as an instance of a sequential unconstrained minimization algorithm (SUMMA) \cite{ByrneItOpt}.
\begin{prop}\label{prop:MM0}
Suppose $P(x)$ is convex, proper, lower semi-continuous and the set of minimizers of $C(x)$ is non-empty. If ${0< \alpha \leq 1/\sigma_M}$, then the sequence $x^k$ in \eqref{eqn:ISTA} monotonically reduces the cost, i.e., satisfies $C(x^{k+1}) \leq C(x^k)$, and converges to a minimizer of $C(x)$. \qed
\end{prop}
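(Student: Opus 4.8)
The plan is to realize ISTA as a majorization--minimization (MM) scheme and then promote the resulting monotone descent to convergence through the SUMMA framework of \cite{ByrneItOpt,byr14p681}. Writing $Q = \tfrac{1}{\alpha}\,I - H^T H$ and $g_k(x) = \tfrac12\,(x-x^k)^T Q\,(x-x^k)$, I would first introduce the surrogate $G_k(x) = C(x) + g_k(x)$. The hypothesis $\alpha \leq 1/\sigma_M$ is exactly what forces every eigenvalue $\tfrac{1}{\alpha} - \sigma_i$ of $Q$ to be nonnegative, so $Q \succeq 0$, $g_k(x) \geq 0$, and $g_k(x^k) = 0$; hence $G_k$ majorizes $C$ and touches it at $x^k$. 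A completion of squares shows that the $x$-dependent quadratic part of $G_k$ reduces to $\tfrac{1}{2\alpha}\|x - z^k\|_2^2$ with $z^k = x^k + \alpha\,H^T(y - H\,x^k)$, so the unique minimizer of $G_k$ is $T_\alpha(z^k) = x^{k+1}$. Monotone descent is then immediate from $C(x^{k+1}) \leq G_k(x^{k+1}) \leq G_k(x^k) = C(x^k)$.

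For convergence I would establish the SUMMA inequality $G_k(x) - G_k(x^{k+1}) \geq g_{k+1}(x)$ for all $x$. Since $C$ is convex and $g_k$ is a convex quadratic, $G_k$ is convex with global minimizer $x^{k+1}$, so the optimality condition reads $-Q\,(x^{k+1} - x^k) \in \partial C(x^{k+1})$. Feeding this subgradient into the convexity inequality for $C$ and combining it with the algebraic identity
\[
\tfrac12\|x - x^k\|_Q^2 - \tfrac12\|x^{k+1} - x^k\|_Q^2 = \tfrac12\|x - x^{k+1}\|_Q^2 + \langle Q\,(x^{k+1}-x^k),\,x - x^{k+1}\rangle
\]
causes the cross terms to cancel, leaving precisely $\tfrac12\|x - x^{k+1}\|_Q^2 = g_{k+1}(x)$. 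This is the one place where convexity of the whole cost is genuinely exploited.

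With the SUMMA inequality available, evaluating at any minimizer $\hat x$ of $C$ gives $C(x^{k+1}) - C(\hat x) \leq g_k(\hat x) - g_{k+1}(\hat x)$, so the nonnegative sequence $g_k(\hat x) = \tfrac12\|x^k - \hat x\|_Q^2$ is nonincreasing and telescopes, which forces $C(x^k) \to C(\hat x)$. The step I expect to be the main obstacle is upgrading this cost convergence to convergence of the iterates themselves. When $\alpha < 1/\sigma_M$ the matrix $Q$ is positive definite, so $\|\cdot\|_Q$ is a genuine norm, $\{x^k\}$ is bounded and Fej\'er-monotone with respect to the minimizer set, and lower semicontinuity identifies every limit point as a minimizer, pinning the sequence to a single one. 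The boundary case $\alpha = 1/\sigma_M$ is delicate because $Q$ becomes singular and the weighted distance no longer controls the components of $x^k - \hat x$ along the top eigenspace of $H^T H$; here I would invoke the SUMMA convergence theorem directly, or argue separately that those residual directions are already pinned down by the data-fidelity term.
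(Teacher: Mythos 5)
Your strategy (MM surrogate plus a SUMMA inequality, following Byrne) is the same one the paper uses: Prop.~\ref{prop:MM0} is stated there by citation to \cite{byr14p681,ByrneItOpt}, and the closest in-house argument is the proof of the weakly convex generalization Prop.~\ref{prop:MMIntro} in Sec.~\ref{sec:MM}, which is your argument with $\rho=0$. Everything you write is correct on the open range $0<\alpha<1/\sigma_M$: the majorization and descent chain, the SUMMA inequality obtained from $-Q\,(x^{k+1}-x^k)\in\partial C(x^{k+1})$, the telescoping of $g_k(\hat x)$ giving $C(x^k)\to C(\hat x)$, and the Fej\'er-monotonicity argument in the $\|\cdot\|_Q$ norm.

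The genuine gap is exactly where you suspect it: the endpoint $\alpha=1/\sigma_M$, which the proposition includes. Neither of your proposed outs closes it as stated --- the basic SUMMA convergence theorem yields only $C(x^k)\to\inf C$, not convergence of the iterates, and ``the residual directions are pinned down by the data-fidelity term'' is a hope, not an argument. The fix is concrete and is what the paper's Lemma~\ref{lem:M} actually establishes. Your lower bound on $G_k(x)-G_k(x^{k+1})$ is only the seminorm $\tfrac12\|x-x^{k+1}\|_Q^2$ because you routed the entire comparison through a single subgradient inequality for $C$, thereby discarding the exact quadratic expansion of the data term. If instead you use that $x^{k+1}$ minimizes the surrogate written as $\tfrac{1}{2\alpha}\|x-z^k\|_2^2+P(x)$, which is $\tfrac{1}{\alpha}$-strongly convex in the Euclidean norm, the same cross-term cancellation yields the strictly stronger bound
\begin{equation*}
G_k(x)-G_k(x^{k+1})\;\geq\;\frac{1}{2\alpha}\,\|x-x^{k+1}\|_2^2\;\geq\;g_{k+1}(x),
\end{equation*}
whose leading coefficient equals $\sigma_M/2>0$ even when $Q$ is singular. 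This is inequality \eqref{ineq1} of the paper with $\rho=0$, and it is the inequality on which boundedness of $\{x^k\}$ and convergence of the whole sequence actually rest in \cite{byr14p681}; with it, your argument goes through uniformly on $0<\alpha\leq 1/\sigma_M$ and no separate boundary case is needed.
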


The forward-backward splitting algorithm \cite{com05p168} gives exactly the same iterations as \eqref{eqn:ISTA}, but allows $\alpha < 2/\sigma_M$. In practice, larger step-sizes accelerate  convergence to a minimizer. Although the MM interpretation is not valid when $ \alpha > 1/\sigma_M$, it can be shown also for ${1/\sigma_M < \alpha < 2/\sigma_M}$ that the cost decreases monotonically with each iteration \cite{she09p384}. However, the convergence proofs in \cite{com05p168,Bauschke} do not make use of this monotone descent property. Rather, the main object of interest in \cite{com05p168,Bauschke} is the distance to the set of minimizers, which shows a monotone behavior.
\begin{prop}\cite{com05p168,Bauschke} \label{prop:FB}
Suppose $P(x)$ is convex, proper, lower semi-continuous and the set of minimizers of $C(x)$ is non-empty. If ${0< \alpha < 2/\sigma_M}$, then the iterates $x^k$ in \eqref{eqn:ISTA} converge to a minimizer of $C(x)$. \qed
\end{prop}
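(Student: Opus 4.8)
The plan is to recognize \eqref{eqn:ISTA} as a forward-backward splitting iteration and to exploit the nonexpansiveness properties of its two constituent operators. Writing $f(x) = \tfrac12\|y - Hx\|_2^2$ and $g = P$, the cost splits as $C = f + g$, with $f$ convex and differentiable and $g$ convex, proper and lower semi-continuous. Since $\nabla f(x) = H^T(Hx - y)$ is affine with Jacobian $H^T H$, it is Lipschitz with constant $\sigma_M$. With this notation the iteration reads $x^{k+1} = F(x^k)$, where $F = T_\alpha \circ (I - \alpha\,\nabla f)$ and $T_\alpha = \operatorname{prox}_{\alpha P}$ is the proximal map of $\alpha P$ as defined in \eqref{eqn:T}.

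First I would verify that the fixed points of $F$ are exactly the minimizers of $C$. This follows from the first-order optimality condition $0 \in \nabla f(x^\ast) + \partial g(x^\ast)$, which rearranges to $x^\ast - \alpha\,\nabla f(x^\ast) \in x^\ast + \alpha\,\partial g(x^\ast)$, i.e.\ $x^\ast = \operatorname{prox}_{\alpha P}\bigl(x^\ast - \alpha\,\nabla f(x^\ast)\bigr) = F(x^\ast)$. Since the set of minimizers is assumed non-empty, $F$ has at least one fixed point $x^\ast$.

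The core of the argument is to show that $F$ is an averaged operator. The proximal map $T_\alpha$ is firmly nonexpansive. For the forward step I would invoke the Baillon--Haddad theorem \cite{Bauschke}: since $f$ is convex with $\sigma_M$-Lipschitz gradient, $\nabla f$ is $(1/\sigma_M)$-cocoercive, which makes $I - \alpha\,\nabla f$ an averaged operator precisely when $0 < \alpha < 2/\sigma_M$. As the composition of averaged operators is again averaged, $F$ is averaged; concretely there is a constant $\kappa \in (0,1)$ with
\[
\|F(u) - F(v)\|_2^2 \le \|u - v\|_2^2 - \tfrac{1-\kappa}{\kappa}\,\bigl\|(I-F)(u) - (I-F)(v)\bigr\|_2^2
\]
for all $u, v$. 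This averagedness step is the main obstacle: the improvement from the MM range $\alpha \le 1/\sigma_M$ of Proposition~\ref{prop:MM0} to the full range $\alpha < 2/\sigma_M$ rests entirely on the cocoercivity of $\nabla f$ supplied by Baillon--Haddad, not on monotone descent of the cost.

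Finally I would run the standard Krasnoselskii--Mann / Opial argument. Setting $v = x^\ast$ and $u = x^k$ and using $F(x^\ast) = x^\ast$, the displayed inequality yields $\|x^{k+1} - x^\ast\|_2^2 \le \|x^k - x^\ast\|_2^2 - \tfrac{1-\kappa}{\kappa}\|x^{k+1} - x^k\|_2^2$. Hence $\|x^k - x^\ast\|_2$ is non-increasing, so $\{x^k\}$ is bounded, and summing over $k$ gives $\sum_k \|x^{k+1} - x^k\|_2^2 < \infty$, whence $x^{k+1} - x^k \to 0$. By boundedness some subsequence satisfies $x^{k_j} \to \bar{x}$, and continuity of the nonexpansive map $F$ together with $x^{k+1}-x^k\to 0$ forces $\bar{x} = F(\bar{x})$, so $\bar{x}$ is a minimizer of $C$. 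Since $\|x^k - \bar{x}\|_2$ is then itself non-increasing and admits a subsequence tending to $0$, the whole sequence converges to $\bar{x}$, completing the proof.
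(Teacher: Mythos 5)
Your proof is correct and follows exactly the route the paper itself relies on: the paper cites \cite{com05p168,Bauschke} for this proposition, and its own machinery in Section~\ref{sec:avg} (fixed points of $T_\alpha U_\alpha$ coincide with minimizers, $T_\alpha$ is $1/2$-averaged, the forward step is averaged for $\alpha<2/(\sigma_M+\rho)$ via a Baillon--Haddad-type bound, composition of averaged operators is averaged, then Krasnosels'ki\u{\i}--Mann) is precisely your argument generalized to $\rho$-weakly convex $P$; setting $\rho=0$ recovers your proof verbatim. No gaps.
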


When $P(x)$ is non-convex, ISTA is still applicable but convergence to a global minimizer is not guaranteed in general. Further, the convergence proof for the forward-backward scheme in \cite{com05p168,Bauschke} is not directly valid. In this paper, instead of an arbitrary non-convex penalty, we consider weakly-convex penalties  \cite{via83p231}\footnote{See specifically Defn.1 and Prop.~4.3 in \cite{via83p231}. We use a slightly different definition than \cite{via83p231}, in order to simplify notation. Our $\rho$-weakly convex functions correspond to $(-\rho/2)$-convex functions of \cite{via83p231}. }. 
\begin{defn} \label{defn:weaklyconvex}
A function $f:\mathbb{R}^n \rightarrow \mathbb{R}$ is said to be $\rho$-weakly convex  if 
\begin{equation*}
h(x) = \frac{s}{2} \, \|x\|_2^2 + f(x)
\end{equation*}
is convex when $ s\geq \rho \geq 0$. \qed
\end{defn}
If $P(x)$ is $\rho$-weakly convex and $0 < \rho \leq \sigma_m$, then $C(x)$ can be shown to be convex, even though $P(x)$ is not. The  minimization problem that arises in this case is not merely of academic interest -- see for instance \cite{NonConvex} for an iterative scheme where each iteration requires solving such a problem, \cite{nik98ICIP} for a binary denoising formulation which employs weakly penalties. More generally, weakly convex penalties are of interest in sparse signal recovery, because they allow to reduce the bias in the estimates, which arise when convex penalties such as the $\ell_1$ norm are utilized \cite{NonConvex,GNC,cha14ICASSP,bay14ICASSP}. Also, weakly convex penalties are general enough to produce any separable monotone threshold function \cite{ant07p16, bay15p265,cha14ICASSP}.

Through the MM scheme, for $\alpha < 1/\sigma_M$, we obtain in Sec.~\ref{sec:MM} the iterations in \eqref{eqn:ISTA} for a weakly convex penalty and show that the algorithm converges to a minimizer by adapting the proof in \cite{byr14p681}. Specifically, we show the following.
\begin{prop}\label{prop:MMIntro}
Suppose $P$ is proper, lower semi-continuous,  $\rho$-weakly convex with ${0 \leq \rho \leq \sigma_m}$, and the set of minimizers of $C(x)$ is non-empty. If $0 < \alpha < 1/\sigma_M$, then 
the sequence $x^k$ in \eqref{eqn:ISTA} monotonically reduces the cost $C(x)$ and converges to a minimizer of $C(x)$. \qed
\end{prop}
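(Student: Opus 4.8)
The plan is to reproduce the sequential unconstrained minimization (SUMMA) argument behind Proposition~\ref{prop:MM0}, taking care that the weak convexity of $P$ does not spoil the properties needed at each step. First I would record the observation that makes everything work: since $0\leq\rho\leq\sigma_m\leq\sigma_M$ and $0<\alpha<1/\sigma_M$, we have $1/\alpha>\sigma_M\geq\rho$, so the map $x\mapsto \frac{1}{2\alpha}\|x-z\|_2^2+P(x)$ is strongly convex with modulus $1/\alpha-\rho>0$; in particular $T_\alpha$ is well defined. Writing $W=\frac{1}{\alpha}I-H^TH\succ0$ (positive definite because $\alpha<1/\sigma_M$), I would introduce the majorizer
\begin{equation*}
G_k(x)=C(x)+g_k(x),\qquad g_k(x)=\tfrac{1}{2}(x-x^{k-1})^T W\,(x-x^{k-1}),
\end{equation*}
and verify by completing the square that the unique minimizer of $G_k$ is exactly the ISTA iterate $x^k=T_\alpha\bigl(x^{k-1}+\alpha H^T(y-Hx^{k-1})\bigr)$. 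Note that $g_k\geq0$ and $g_k(x^{k-1})=0$.

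The crux is the SUMMA inequality $G_k(x)-G_k(x^k)\geq g_{k+1}(x)$ for all $x$, and this is where the hypothesis $\rho\leq\sigma_m$ enters. The argument is that $G_k$ differs from the convex function $\tfrac{\rho}{2}\|x\|_2^2+P(x)$ by a quadratic with Hessian $(1/\alpha-\rho)I$, so $G_k$ is strongly convex with modulus $1/\alpha-\rho$; since $x^k$ minimizes it, the strong-convexity lower bound (valid even for non-smooth convex functions) gives $G_k(x)-G_k(x^k)\geq\tfrac{1}{2}(1/\alpha-\rho)\|x-x^k\|_2^2$. It then suffices to check $(1/\alpha-\rho)I\succeq W$, i.e.\ $H^TH\succeq\rho I$, which is precisely $\sigma_m\geq\rho$. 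I expect this to be the main obstacle: without convexity of $P$ one cannot majorize the data term and add $P$ in the usual convex fashion, so the weak convexity must instead be absorbed into the strong convexity of the surrogate, and the eigenvalue gap $\sigma_m-\rho\geq0$ is exactly what closes the inequality.

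Given the SUMMA inequality, monotone descent is immediate: $G_k(x^k)\leq G_k(x^{k-1})=C(x^{k-1})$ together with $G_k(x^k)\geq C(x^k)$ yields $C(x^k)\leq C(x^{k-1})$. To show $C(x^k)\to\inf C$, I would fix a minimizer $\hat x$ and set $x=\hat x$ in the SUMMA inequality; combined with $G_k(x^k)\geq C(x^k)$ this gives $g_k(\hat x)-g_{k+1}(\hat x)\geq C(x^k)-C(\hat x)\geq0$. Hence $\{g_k(\hat x)\}$ is non-increasing and non-negative, the successive differences telescope and are summable, and therefore $C(x^k)-C(\hat x)\to0$.

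Finally I would upgrade convergence of the cost to convergence of the iterates, mirroring Proposition~\ref{prop:FB}. Since $g_k(\hat x)=\tfrac{1}{2}\|x^{k-1}-\hat x\|_W^2$ converges, the sequence is Fej\'er-monotone in the norm $\|\cdot\|_W$ induced by $W\succ0$; in particular $\{x^k\}$ is bounded, so it admits a subsequence $x^{k_j}\to x^*$. Lower semi-continuity of $C$ together with $C(x^{k_j})\to\inf C$ forces $C(x^*)=\inf C$, so $x^*$ is a minimizer. Applying the Fej\'er property with $\hat x=x^*$, the convergent quantity $\|x^k-x^*\|_W$ has a subsequence tending to $0$ and hence tends to $0$, so the whole sequence converges to the minimizer $x^*$. \qed
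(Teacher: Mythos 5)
Your proposal is correct and follows essentially the same route as the paper: the same MM surrogate $C+g_k$ built from $W=\alpha^{-1}I-H^TH\succ0$, the same key SUMMA inequality $G_k(x)-G_k(x^k)\geq\tfrac{1}{2}(\alpha^{-1}-\rho)\|x-x^k\|_2^2\geq g_{k+1}(x)$ hinging on $H^TH\succeq\rho I$, and the same descent--boundedness--subsequence--whole-sequence scheme. The only cosmetic differences are that you obtain the key inequality from the abstract strong-convexity lower bound at a minimizer (the paper computes the identical bound explicitly via a subgradient of $P+\tfrac{\rho}{2}\|\cdot\|_2^2$ in Lemma~\ref{lem:aux}), and you close with the telescoping sum giving $C(x^k)\to\inf C$ followed by Fej\'er monotonicity of $\langle x^k-\hat x,\,W(x^k-\hat x)\rangle$ plus lower semi-continuity, whereas the paper tracks the non-increasing quantity $M(x^*,x^k)-M(x^{k+1},x^k)$ and establishes $\lim_k C(x^k)=\inf C$ by a separate contradiction argument.
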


When the penalty is convex, we noted that the  step-size allowed by MM can be doubled. Therefore, one is tempted to ask if larger stepsizes could be used to accelerate the algorithm for when $P$ is weakly convex. The answer is not trivial because, when $\rho >0$, the operator $T_{\alpha}$ loses some of its properties. In particular, it is not firmly non-expansive \cite{Bauschke}, a property which is instrumental in proving Prop.~\ref{prop:FB}. Loss of non-expansivity can be visibly seen by noting that the derivatives of the threshold functions exceed unity when $\rho >0$ (see Figs.~\ref{fig:DemoTHold},~\ref{fig:QuantaPenalty} in this paper or Fig.~2 in \cite{NonConvex}). On the other hand, weak convexity is a mild departure from convexity and one expects some generalization of Prop.~\ref{prop:FB} to  hold. We have the following result  in this direction.
\begin{prop}\label{prop:FB2}
Suppose $P$ is lower semi-continuous, $\rho$-weakly convex with ${0 \leq \rho \leq \sigma_m}$ and the set of minimizers of $C(x)$ is non-empty. If $0<\alpha < 2/(\sigma_M + \rho)$, then  
the sequence $x^k$ in \eqref{eqn:ISTA} converges to a minimizer of $C(x)$. \qed\end{prop}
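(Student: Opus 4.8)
The plan is to recast the weakly convex shrinkage operator $T_\alpha$ as a genuine convex proximity operator precomposed with a dilation, and then to recognize the resulting ISTA map as a composition of averaged operators. Write $f(x) = \tfrac12\|y-Hx\|_2^2$, so that \eqref{eqn:ISTA} reads $x^{k+1}=T_\alpha(x^k-\alpha\nabla f(x^k))$, and set $\tilde P(x) = P(x) + \tfrac{\rho}{2}\|x\|_2^2$, which is convex by Definition~\ref{defn:weaklyconvex}. Completing the square in \eqref{eqn:T} (i.e. absorbing the $-\tfrac{\rho}{2}\|x\|_2^2$ coming from $P=\tilde P-\tfrac{\rho}{2}\|\cdot\|_2^2$ into the quadratic) gives the identity
\begin{equation*}
T_\alpha(z) = \mathrm{prox}_{\tilde P/\gamma}(\mu\,z), \qquad \gamma = \tfrac{1}{\alpha}-\rho, \quad \mu = \tfrac{1}{1-\alpha\rho}.
\end{equation*}
Since $\alpha < 2/(\sigma_M+\rho) \le 1/\rho$, we have $\gamma>0$ and $\mu>0$, so the objective in \eqref{eqn:T} is strongly convex (hence $T_\alpha$ is well defined) and $\Pi := \mathrm{prox}_{\tilde P/\gamma}$ is the proximity operator of a proper lsc convex function, therefore firmly non-expansive. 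Thus \eqref{eqn:ISTA} becomes $x^{k+1} = \Pi(B\,x^k)$, where $B(x) = \mu\bigl(x + \alpha H^T(y-Hx)\bigr)$ is affine with symmetric linear part $L = \mu(I - \alpha H^T H)$.

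Next I would show that $B$ is an averaged operator. Because $L$ is symmetric its eigenvalues are $\mu(1-\alpha\sigma)$ for $\sigma\in[\sigma_m,\sigma_M]$, so they lie in $[\mu(1-\alpha\sigma_M),\,\mu(1-\alpha\sigma_m)]$. A one-line computation shows that the right endpoint satisfies $\mu(1-\alpha\sigma_m)\le 1$ exactly when $\rho\le\sigma_m$, while the left endpoint satisfies $\mu(1-\alpha\sigma_M)>-1$ exactly when $\alpha(\sigma_M+\rho)<2$; these are precisely the two hypotheses. Hence $\mathrm{spec}(L)\subseteq(-1,1]$, and an affine operator whose symmetric linear part has spectrum in $(-1,1]$ is $\beta$-averaged for a suitable $\beta\in(0,1)$ (only the linear part matters for averagedness). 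This is the step where the exact step-size bound $2/(\sigma_M+\rho)$ enters.

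With $B$ averaged and $\Pi$ firmly non-expansive (i.e. $\tfrac12$-averaged), the composite map $S = \Pi\circ B$ is itself averaged, by the standard fact that a composition of averaged operators is averaged \cite{Bauschke}. I would then verify that the fixed points of $S$ coincide with the minimizers of $C$: unwinding $x^* = \Pi(Bx^*)$ through the subdifferential characterization of the prox and using $\gamma\mu = 1/\alpha$ yields $0 \in \nabla f(x^*) + \partial P(x^*) = \partial C(x^*)$, which, since $\rho\le\sigma_m$ forces $C$ to be convex, is equivalent to $x^*$ minimizing $C$. As the minimizer set is non-empty by assumption, $\mathrm{Fix}(S)\neq\emptyset$, and the Krasnoselskii--Mann convergence theorem for averaged operators (the same engine underlying Prop.~\ref{prop:FB}, see \cite{com05p168,Bauschke}) gives $x^k\to x^\star$, a minimizer of $C$.

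I expect the main obstacle to be the reformulation together with the averagedness of $B$, rather than the final convergence invocation. The delicate point is that merely knowing $B$ is non-expansive would not suffice, since the composition of a non-expansive map with a firmly non-expansive one need not be averaged and may fail to converge; one genuinely needs the stronger property that $B$ is averaged, i.e. that $\mathrm{spec}(L)$ stays strictly above $-1$. This is exactly what $\alpha<2/(\sigma_M+\rho)$ buys, and it is the weakly-convex analogue of the role of $\alpha<2/\sigma_M$ in the convex case, with the correction term $\rho$ accounting for the loss of firm non-expansiveness of $T_\alpha$. A secondary point requiring care is the subdifferential bookkeeping for the non-convex $P$ in the fixed-point characterization, which is cleanest if carried out entirely in terms of the convex function $\tilde P$ and only translated back to $P$ at the very end.
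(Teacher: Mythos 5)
Your proposal is correct and takes essentially the same route as the paper: your identity $T_\alpha(z)=\mathrm{prox}_{\tilde P/\gamma}(\mu z)$ with $\mu=1/(1-\alpha\rho)$ is exactly the paper's factorization $T_\alpha U_\alpha=S_\alpha V_\alpha$ into a $1/2$-averaged proximity operator of the convexified penalty and a rescaled affine map whose symmetric linear part has spectrum in $(-1,1]$ under $\rho\le\sigma_m$ and $\alpha<2/(\sigma_M+\rho)$, followed by the composition-of-averaged-operators argument, the fixed-point characterization, and the Krasnosels'ki\u{\i}--Mann theorem. The only cosmetic difference is where the scale factor is attached (the paper scales the input of $T_\alpha$ and divides $U_\alpha$ by $1-\alpha\rho$, while you absorb it into the affine map).
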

If we regard $\rho$ as a measure of the deviation of $P$ from being convex, then we see that this deviation from convexity shows itself in the maximum step-size allowed. Note that since $\rho \leq \sigma_m \leq \sigma_M$, the maximum step-size $\alpha$ allowed by Prop.~\ref{prop:FB2} is in general greater than that allowed by Prop~\ref{prop:MMIntro}. We also note that as in the convex case, it can be shown that the cost monotonically decreases with each iteration but this property is not used in the proof of the proposition.

\subsection*{Generalization to an Arbitrary Data Term}
Application of ISTA is not restricted to cost functions that employ quadratic data terms. More generally, consider a generic cost function of the form
\begin{equation}\label{eqn:D}
D(x) = f(x) + P(x),
\end{equation}
where $f:\mathbb{R}^n \to \mathbb{R}$ is a differentiable function. To minimize $D$, ISTA constructs a sequence as
\begin{equation}\label{eqn:ISTAgen}
x^{k+1} = T_{\alpha} \Bigl( x^k - \alpha\,\nabla f(x^k) \Bigr).
\end{equation}
In this setup, the following proposition applies for when $f$ and $P$ are convex \cite{com05p168,Bauschke}.
\begin{prop}\cite{com05p168,Bauschke}\label{prop:FBgeneral}
Suppose $f(x)$ and $P(x)$ are proper, lower semi-continuous, convex and for $\sigma >0$,
\begin{equation*}
\|\nabla f(x) - \nabla f(y) \|_2 \leq \sigma \|x -y \|_2 \text{ for all }x,y.
\end{equation*}
Suppose also that the set of minimizers of $D(x)$ in \eqref{eqn:D} is non-empty.
 If $0< \alpha < 2/\sigma$, then the iterates $x^k$ in \eqref{eqn:ISTAgen} converge to a  minimizer of $D(x)$. \qed
\end{prop}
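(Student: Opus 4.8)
The plan is to cast the iteration \eqref{eqn:ISTAgen} as the repeated application of a single averaged nonexpansive operator and then invoke the standard Krasnoselskii--Mann machinery, exactly as in the quadratic case underlying Prop.~\ref{prop:FB}. Write the forward step as $F_\alpha = I - \alpha\,\nabla f$ and the backward step as the proximity operator $T_\alpha$ of \eqref{eqn:T}, so that one iteration reads $x^{k+1} = G(x^k)$ with $G = T_\alpha \circ F_\alpha$. The first step is to identify the fixed points of $G$ with the minimizers of $D$. Since $f$ is differentiable and convex and $P$ is convex, $x^\ast$ minimizes $D$ if and only if $0 \in \nabla f(x^\ast) + \partial P(x^\ast)$; using the characterization $x = T_\alpha(z) \iff (z - x)/\alpha \in \partial P(x)$ of the proximity operator, this is seen to be equivalent to $x^\ast = G(x^\ast)$. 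Thus $\mathrm{Fix}(G)$ coincides with the assumed nonempty minimizer set.

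The second step is to show that $G$ is averaged. The backward operator $T_\alpha$ is firmly nonexpansive, being the proximity operator of the convex function $P$ (see \cite{Bauschke}), hence $1/2$-averaged. For the forward operator, the key ingredient is the Baillon--Haddad theorem (see \cite{Bauschke}): because $f$ is convex with $\sigma$-Lipschitz gradient, $\nabla f$ is $(1/\sigma)$-cocoercive, i.e. $\langle \nabla f(x) - \nabla f(y),\, x - y\rangle \geq (1/\sigma)\,\|\nabla f(x) - \nabla f(y)\|_2^2$. Expanding $\|F_\alpha(x) - F_\alpha(y)\|_2^2$ and substituting this inequality shows that $F_\alpha$ is $(\alpha\sigma/2)$-averaged precisely when $\alpha\sigma/2 < 1$, that is, when $\alpha < 2/\sigma$. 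Since a composition of averaged operators is again averaged, $G = T_\alpha \circ F_\alpha$ is $\kappa$-averaged for some $\kappa \in (0,1)$, and we may write $G = (1-\kappa)\,I + \kappa\,R$ with $R$ nonexpansive.

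The final step extracts convergence from averagedness together with $\mathrm{Fix}(G) \neq \emptyset$. For any fixed point $x^\ast$ the averaged structure gives $\|x^{k+1} - x^\ast\|_2^2 \leq \|x^k - x^\ast\|_2^2 - \tfrac{1-\kappa}{\kappa}\,\|x^{k+1} - x^k\|_2^2$, which simultaneously yields Fej\'er monotonicity of $(x^k)$ with respect to $\mathrm{Fix}(G)$ and, upon summing over $k$, asymptotic regularity $\|x^{k+1} - x^k\|_2 \to 0$. Fej\'er monotonicity bounds the sequence, so some subsequence converges; asymptotic regularity together with the continuity of $G$ forces every cluster point to lie in $\mathrm{Fix}(G)$, hence to be a minimizer of $D$. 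Working in $\mathbb{R}^n$ removes any weak/strong distinction, so Fej\'er monotonicity relative to a single cluster point in $\mathrm{Fix}(G)$ pins down the entire sequence, and convergence follows.

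I expect the main obstacle to be the second step, and specifically the passage through the Baillon--Haddad theorem that converts the Lipschitz bound on $\nabla f$ into cocoercivity, since it is cocoercivity rather than Lipschitz continuity alone that renders $F_\alpha$ averaged and recovers the sharp threshold $\alpha < 2/\sigma$. Convexity of $f$ is essential here: without it the expansion of $\|F_\alpha(x)-F_\alpha(y)\|_2^2$ would only control the step for $\alpha < 1/\sigma$, reproducing the weaker MM-type restriction rather than the doubled range, much as weak convexity of $P$ forces the reduced range in Prop.~\ref{prop:FB2}.
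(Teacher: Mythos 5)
Your proof is correct and follows essentially the same route the paper takes: the paper states Prop.~\ref{prop:FBgeneral} as a cited result from \cite{com05p168,Bauschke} without reproving it, but its own proof of the generalization Prop.~\ref{prop:FBgeneralNC} is exactly your argument (fixed points of $T_\alpha\circ(I-\alpha\nabla f)$ equal minimizers, $T_\alpha$ is $1/2$-averaged, Baillon--Haddad makes the forward step $\alpha\sigma/2$-averaged for $\alpha<2/\sigma$, composition of averaged operators is averaged, then Krasnosels'ki\u{\i}--Mann) specialized to $\rho=0$. Your closing remark about why convexity of $f$ and cocoercivity, not mere Lipschitz continuity, yield the doubled step-size range matches the paper's own discussion.
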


We note that this is a generalization of Prop.~\ref{prop:FB} since for $f(x) = \frac{1}{2} \|y - H x\|_2^2$, we have $\nabla f (x) =  H^T\,(Hx - y)$, and thus $\|\nabla f (x) - \nabla f (y) \| \leq \sigma_M \|x - y \|_2$. Also, when ${1/ \sigma < \alpha < 2/\sigma}$,\,\,  even though the MM interpretation is not valid, it is possible to show that the cost decreases monotonically \cite{she09p384}.

For weakly convex $P$, we show in this paper that Prop.~\ref{prop:FBgeneral} generalizes as follows.
\begin{prop}\label{prop:FBgeneralNC}
Suppose $f(x)$, $P(x)$ are proper, lower semi-continuos, $P(x)$ is $\rho$-weakly convex, ${f(x) -\frac{\rho}{2}\| x \|_2^2}$ is convex and for $\sigma > 0$,
\begin{equation*}
\|\nabla f (x) - \nabla f (y) \|_2 \leq \sigma \|x - y\|_2 \text{ for all }x,y.
\end{equation*}
Suppose also that the set of minimizers of $D(x)$ in \eqref{eqn:D} is non-empty.
If $0 < \alpha < 2/(\sigma + \rho)$, then the iterates $x^k$ in \eqref{eqn:ISTAgen} converges to a minimizer of $D(x)$. \qed
\end{prop}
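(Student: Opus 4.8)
The plan is to reduce Proposition~\ref{prop:FBgeneralNC} to the already-established convex case, Proposition~\ref{prop:FBgeneral}, by transferring the non-convexity of $P$ onto $f$. Set $\tilde f(x) = f(x) - \frac{\rho}{2}\|x\|_2^2$ and $\tilde P(x) = P(x) + \frac{\rho}{2}\|x\|_2^2$, so that $D = \tilde f + \tilde P$ with both summands now convex: $\tilde f$ is convex by hypothesis, and $\tilde P$ is convex because $P$ is $\rho$-weakly convex. The first step is purely algebraic: I would show that one step of ISTA~\eqref{eqn:ISTAgen} applied to $(f,P)$ with step $\alpha$ coincides exactly with one forward-backward step applied to $(\tilde f,\tilde P)$, but with a rescaled step $\bar\alpha = \alpha/(1-\alpha\rho)$. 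This is seen by writing the defining minimization of $\mathrm{prox}_{\bar\alpha\tilde P}\bigl(x^k - \bar\alpha\nabla\tilde f(x^k)\bigr)$, expanding $\tilde P$ and $\nabla\tilde f = \nabla f - \rho I$, and completing the square; the extra $\frac{\rho}{2}\|x\|_2^2$ merely reweights the quadratic and, after normalization, reproduces precisely $T_{\alpha}\bigl(x^k-\alpha\nabla f(x^k)\bigr)$ with the original $\alpha$. Note that $\alpha<2/(\sigma+\rho)\le 1/\rho$ guarantees $1-\alpha\rho>0$, hence $\bar\alpha>0$, and also that $T_\alpha$ is well-defined since its defining problem is then strictly convex.

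Second, I would verify that $\nabla\tilde f = \nabla f - \rho I$ is $(\sigma-\rho)$-Lipschitz. This is the key quantitative step, and the naive two-point estimate is not good enough: from $\|\nabla f(x)-\nabla f(y)\|_2\le\sigma\|x-y\|_2$ together with monotonicity of $\nabla\tilde f$ one only extracts the weaker constant $\sqrt{\sigma^2-\rho^2}$, which would not yield the advertised threshold. The tight constant $\sigma-\rho$ relies on $\nabla f$ being a gradient field (symmetric Jacobian). I would obtain it from the standard interpolation inequality for a $\rho$-strongly convex function with $\sigma$-Lipschitz gradient, namely $\langle\nabla f(x)-\nabla f(y),x-y\rangle\ge \frac{\rho\sigma}{\rho+\sigma}\|x-y\|_2^2 + \frac{1}{\rho+\sigma}\|\nabla f(x)-\nabla f(y)\|_2^2$; substituting $\nabla f=\nabla\tilde f+\rho I$ collapses this to $\langle\nabla\tilde f(x)-\nabla\tilde f(y),x-y\rangle\ge\frac{1}{\sigma-\rho}\|\nabla\tilde f(x)-\nabla\tilde f(y)\|_2^2$, i.e.\ $\frac{1}{\sigma-\rho}$-cocoercivity, which in turn forces the Lipschitz bound $\sigma-\rho$.

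Finally, with $\tilde f,\tilde P$ convex, proper and lower semi-continuous, $\nabla\tilde f$ being $(\sigma-\rho)$-Lipschitz, and the minimizer set of $D=\tilde f+\tilde P$ non-empty, I would apply Proposition~\ref{prop:FBgeneral} to the pair $(\tilde f,\tilde P)$ with step $\bar\alpha$. Its hypothesis $0<\bar\alpha<2/(\sigma-\rho)$ is exactly equivalent to the assumed $0<\alpha<2/(\sigma+\rho)$, since $\frac{\alpha}{1-\alpha\rho}<\frac{2}{\sigma-\rho}\iff(\sigma-\rho)\alpha<2(1-\alpha\rho)\iff\alpha(\sigma+\rho)<2$. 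Because the two iterations generate the identical sequence $x^k$, Proposition~\ref{prop:FBgeneral} yields convergence of $x^k$ to a minimizer of $D$, which is the claim. The main obstacle is the sharp Lipschitz estimate of the second step together with the bookkeeping of the rescaled step size, as it is precisely the interplay of these two that makes the thresholds $2/(\sigma+\rho)$ and $2/(\sigma-\rho)$ match; the degenerate case $\sigma=\rho$, where $\tilde f$ is affine, would be handled separately, the iteration then reducing to a proximal-point scheme that converges for every $\bar\alpha>0$.
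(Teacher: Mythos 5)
Your proof is correct, and it takes a genuinely different route from the paper. You reduce the statement to the known convex result (Prop.~\ref{prop:FBgeneral}) by convexifying the pair: $\tilde f = f - \tfrac{\rho}{2}\|\cdot\|_2^2$, $\tilde P = P + \tfrac{\rho}{2}\|\cdot\|_2^2$, and observing that ISTA on $(f,P)$ with step $\alpha$ is \emph{identical} to forward--backward on $(\tilde f,\tilde P)$ with step $\bar\alpha=\alpha/(1-\alpha\rho)$; your algebra for the prox identity, the $(\sigma-\rho)$-Lipschitz bound on $\nabla\tilde f$, and the equivalence $\bar\alpha<2/(\sigma-\rho)\iff\alpha<2/(\sigma+\rho)$ all check out. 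The paper instead re-derives convergence from scratch in the spirit of \cite{com05p168}: it factors the iteration map as $S_\alpha V_\alpha$ with $S_\alpha(x)=T_\alpha((1-\alpha\rho)x)$ shown to be $1/2$-averaged (Prop.~\ref{prop:fn}) and $V_\alpha=\tfrac{1}{1-\alpha\rho}(I-\alpha\nabla f)$ shown to be averaged via Baillon--Haddad applied to $g=f-\tfrac{\rho}{2}\|\cdot\|_2^2$ (Lemma~\ref{lem:g}, Prop.~\ref{prop:gradient}), then invokes composition of averaged operators, the fixed-point characterization (Prop.~\ref{prop:fixedGeneral}), and Krasnosels'ki\u{\i}--Mann. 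Under the hood these are the same objects --- your $\mathrm{prox}_{\bar\alpha\tilde P}$ is exactly the paper's $S_\alpha$ and your forward step $I-\bar\alpha\nabla\tilde f$ is exactly $V_\alpha$ --- but your version makes the structural point transparent (weakly convex ISTA \emph{is} convex forward--backward in disguise) and is shorter, at the price of leaning on Prop.~\ref{prop:FBgeneral} as a black box and of needing the separate fixed-point argument implicitly contained in it; the paper's version is self-contained and additionally delivers Prop.~\ref{prop:fixedGeneral}, which it reuses elsewhere. One small remark: the interpolation inequality you cite for the sharp $(\sigma-\rho)$ constant is standard, but its usual proof already passes through the fact you are trying to establish; the paper's Lemma~\ref{lem:g} (Baillon--Haddad on $\nabla f$ plus a direct expansion of $\|\nabla\tilde f(x)-\nabla\tilde f(y)\|_2^2$) gives the same bound without that detour, and you may prefer to quote it to avoid any appearance of circularity. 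Your handling of the degenerate case $\sigma=\rho$ as a proximal-point iteration is also fine.
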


As in the case with a quadratic data fidelity term, it can also be shown \cite{kow14ICASSP,she09p384} that the cost decreases monotonically with each iteration. Although the proof of Prop.~\ref{prop:FBgeneralNC} does not depend on this descent property,  we will provide a proof of the following proposition for the sake of completeness. We note, however, that the proof we present follows \cite{kow14ICASSP,she09p384}.

\begin{prop}\label{prop:FBgeneralDescent}
Suppose the hypotheses of Prop.~\ref{prop:FBgeneralNC} hold. Then, $D(x^{k+1}) \leq D(x^k)$.
 \qed
\end{prop}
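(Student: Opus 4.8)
The plan is to run the standard majorization--minimization descent argument, while drawing the extra slack needed for step-sizes beyond $1/\sigma$ from the weak convexity of $P$. I would first introduce the surrogate
\begin{equation*}
Q(x) = f(x^k) + \bigl\langle \nabla f(x^k),\, x - x^k\bigr\rangle + \frac{1}{2\alpha}\,\|x - x^k\|_2^2 + P(x),
\end{equation*}
and note, by completing the square in the first three terms, that $Q$ differs only by an additive constant from the objective in \eqref{eqn:T} evaluated at $z = x^k - \alpha\,\nabla f(x^k)$. Hence the iterate $x^{k+1}$ produced by \eqref{eqn:ISTAgen} is precisely the minimizer of $Q$, and moreover $Q(x^k) = D(x^k)$. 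These two facts are the anchors of the argument.

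I would then bound $D(x^{k+1})$ from above and $Q(x^{k+1})$ from above. For the former, the $\sigma$-Lipschitz bound on $\nabla f$ yields the descent lemma $f(x^{k+1}) \leq f(x^k) + \langle \nabla f(x^k),\, x^{k+1}-x^k\rangle + \frac{\sigma}{2}\|x^{k+1}-x^k\|_2^2$, which, after adding $P(x^{k+1})$ to both sides and regrouping the quadratic terms, rearranges to
\begin{equation*}
D(x^{k+1}) \leq Q(x^{k+1}) + \Bigl(\frac{\sigma}{2} - \frac{1}{2\alpha}\Bigr)\,\|x^{k+1}-x^k\|_2^2 .
\end{equation*}
For the latter, since $P$ is $\rho$-weakly convex and $\alpha < 2/(\sigma+\rho) \leq 1/\rho$, the term $\frac{1}{2\alpha}\|\cdot - x^k\|_2^2$ over-compensates the concavity of $P$, so $Q$ is $(1/\alpha - \rho)$-strongly convex with $1/\alpha - \rho > 0$. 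Applying the strong-convexity lower bound at the point $x^k$, and using that $x^{k+1}$ minimizes $Q$ together with $Q(x^k)=D(x^k)$, gives
\begin{equation*}
Q(x^{k+1}) \leq D(x^k) - \frac{1}{2}\Bigl(\frac{1}{\alpha} - \rho\Bigr)\,\|x^{k+1}-x^k\|_2^2 .
\end{equation*}

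Adding the two displays collapses the $Q(x^{k+1})$ terms and leaves the coefficient $\frac{\sigma+\rho}{2} - \frac{1}{\alpha}$ multiplying $\|x^{k+1}-x^k\|_2^2$; this is non-positive exactly when $\alpha \leq 2/(\sigma+\rho)$, so the hypothesis $\alpha < 2/(\sigma+\rho)$ delivers $D(x^{k+1}) \leq D(x^k)$, with strict decrease unless $x^{k+1}=x^k$. The step I expect to require the most care is pinning down the strong-convexity modulus: the descent lemma alone only secures monotonicity for $\alpha \leq 1/\sigma$, and it is the negative $-\frac{1}{2}(1/\alpha-\rho)$ term harvested from Definition \ref{defn:weaklyconvex} that widens the admissible range to $2/(\sigma+\rho)$. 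Verifying from that definition that $P + \frac{1}{2\alpha}\|\cdot\|_2^2$ is indeed $(1/\alpha-\rho)$-strongly convex, and that $1/\alpha-\rho>0$ under the stated bound (which uses $\rho\leq\sigma$, forced by the Lipschitz and convexity hypotheses on $f$), is the crux of the calculation.
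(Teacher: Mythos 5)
Your proof is correct and is essentially the paper's argument: the quadratic bound you harvest from the $(1/\alpha-\rho)$-strong convexity of the surrogate $Q$ at its minimizer $x^{k+1}$ is, once $Q$ is expanded, exactly the paper's Lemma~\ref{lem1} (which the paper derives instead from the subgradient optimality condition of the prox step via Lemma~\ref{lem:aux}), and the other ingredient, the descent lemma for the $\sigma$-Lipschitz gradient of $f$, is the paper's Lemma~\ref{lem2}. Adding the two and noting that the resulting coefficient $\tfrac{\sigma+\rho}{2}-\tfrac{1}{\alpha}$ is negative for $\alpha<2/(\sigma+\rho)$ is precisely how the paper concludes.
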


Although Prop.~\ref{prop:FB2} is a corollary of Prop.~\ref{prop:FBgeneralNC}, we provide an independent proof for  Prop.~\ref{prop:FB2} because showing convergence in this special case allows a more elementary proof. Especially, if $\rho < \sigma_m$, rather than just $\rho \leq \sigma_m$, yet a simpler proof is valid. We will present this simple proof before considering the more general case $\rho \leq \sigma_m$. We also note that the analysis in this paper can be put in a more compact form by resorting to results from non-smooth (non-necessarily convex) analysis \cite{RockafellarWets,Clarke}. However, the additional technical requirements in non-smooth non-convex analysis can be avoided because we are not interested in an arbitrary non-convex problem. Rather, we work under a weak-convexity assumption and this in turn allows us to derive the results using convex analysis methods, simplifying the discussions.

\subsection*{Related Work and Contribution}
Convergence of ISTA with shrinkage/threshold functions other than the soft-threshold has been studied previously in \cite{kow14ICASSP,she09p384,vor13ICASSP}.  The results of  \cite{kow14ICASSP,she09p384} imply that descent in the cost is achieved for the sequence $x^k$, in the sense $C(x^{k+1}) \leq C(x^k)$, when the step-size satisfies $\alpha < 2/(\sigma + \rho)$ as in Prop.~\ref{prop:FBgeneralNC}. This property, along with the global convergence theorem (see Thm.~7.2.3 in \cite{Bazaraa}) implies that accumulation points of $x^k$ minimize the cost function. ISTA with firm-thresholding instead of soft-thresholding is studied in \cite{vor13ICASSP} and it is shown by an MM argument that the cost is reduced at each iteration for $\alpha < 1/ \sigma$. The authors then conclude that the algorithm is convergent by an argument based on the properties of the penalty function. 

In this paper, we present two independent proofs of convergence for ISTA with weakly-convex penalties. The first proof (Prop.~\ref{prop:MMIntro}) uses the MM and SUMMA interpretations of the algorithm and relies on the monotonic descent of the cost with each iteration. This proof is obtained by adapting the proof of convergence presented in \cite{ByrneItOpt,byr14p681} (where it is assumed that the penalty is convex, unlike the case considered here). Unfortunately, in the context of ISTA, this approach does not extend to the case where the algorithm falls out of the MM framework. Our second proof of convergence (for Prop.~\ref{prop:FB2} and Prop.~\ref{prop:FBgeneralNC}), which covers cases that fall out of the MM framework, does not rely on such a descent property. 
We instead study the mapping that ISTA employs, following the schema of \cite{com05p168}.  For weakly convex penalties, such a study has not appeared in the literature as far as we are aware.

\subsection*{Outline}

In Section~\ref{sec:pre}, we recall some definitions and results from convex analysis.
We provide an MM derivation of ISTA and prove Prop.~\ref{prop:MMIntro} in Section~\ref{sec:MM}. To address the cases that fall out of the MM framework, we study in Section~\ref{sec:Operator} the operator that maps $x^k$ to $x^{k+1}$ in \eqref{eqn:ISTA}. Specifically, we show the relation between the fixed points of this operator and the minimizers of the cost in Section~\ref{sec:fixedpt}, study the threshold operator for a weakly convex penalty in Section~\ref{sec:THold}, provide a short and simple convergence proof for the case where the cost is strictly convex in Section~\ref{sec:Strict} and finally provide the proof of Prop.~\ref{prop:FB2} in Section~\ref{sec:avg}. The proofs of Prop.~\ref{prop:FBgeneralNC} and Prop.~\ref{prop:FBgeneralDescent} are given in Section~\ref{sec:GeneralDataFid}. In order to demonstrate that larger stepsizes may be more favorable, we also present two experiments in Section~\ref{sec:experiment}. We conclude with a brief outlook in Section~\ref{sec:conc}.

Some of the frequently used terms/symbols are listed in Table~\ref{table:symb}. We also note that our discussion is restricted to functions defined on $\mathbb{R}^n$ for simplicity.

\begin{table}
\renewcommand{\arraystretch}{1.3}
\centering
\caption{Frequently Used Terms/Symbols \label{table:symb}}
\begin{tabular}{l}
\hline 
$P$ :  penalty function \\
$C$ : cost function from \eqref{eqn:C} \\
$T_\alpha$ :  threshold function defined in \eqref{eqn:T} 
\\
$\alpha$ : step-size for ISTA \\
$\sigma_m, \sigma_M$ :  least and greatest eigenvalues of $H^T\,H$ \\
$\rho$ : weak-convexity parameter of $P$ (see Defn.\ref{defn:weaklyconvex})\\
\hline
\end{tabular}
\end{table}

\section{Definitions and Results from Convex Analysis}\label{sec:pre}

For later reference, we briefly recall some definitions and results from convex analysis in this section. We refer to \cite{Rockafellar,HiriartFund,via83p231} for further discussion. 
\begin{defn}
Suppose $f:\mathbb{R}^n \to \mathbb{R}$ is convex. The subdifferential of $f$ at $x \in \mathbb{R}^n$ is denoted by $\partial f (x)$ and is defined to be the set of $z \in \mathbb{R}^n$ that satisfy
\begin{equation*}
f(x) + \langle y - x, z \rangle \leq f(y), \text{ for all }y. 
\end{equation*}
Any element of $\partial f (x)$ is said to be a subgradient of $f$ at $x$. \qed
\end{defn}
Using the notion of a subdifferential, the minimizer of a convex function can be easily characterized.
\begin{prop}
Suppose $f:\mathbb{R}^n \to \mathbb{R}$ is convex. $x$ minimizes $f$ if and only if $0 \in \partial f(x)$.
\qed
\end{prop}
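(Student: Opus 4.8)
The plan is to prove both implications directly from the defining inequality of the subdifferential, specialized to the subgradient $z = 0$. Recall that $0 \in \partial f(x)$ means precisely that the inequality in the preceding definition holds with $z$ replaced by $0$, namely
\begin{equation*}
f(x) + \langle y - x, 0 \rangle \leq f(y) \quad \text{for all } y.
\end{equation*}
Since $\langle y - x, 0 \rangle = 0$, this reduces to $f(x) \leq f(y)$ for all $y$, which is exactly the statement that $x$ is a global minimizer of $f$. So the entire content of the proposition is that these two conditions are verbatim restatements of one another once the inner product term vanishes.

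For the direction ($\Leftarrow$), I would assume $0 \in \partial f(x)$ and simply read off $f(x) \leq f(y)$ for all $y$ from the displayed inequality, concluding that $x$ minimizes $f$. For the direction ($\Rightarrow$), I would assume $x$ minimizes $f$, so that $f(x) \leq f(y)$ for every $y$; adding the identically zero term $\langle y - x, 0 \rangle$ to the left side leaves the inequality unchanged, which is precisely the condition for $0$ to belong to $\partial f(x)$.

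The honest assessment is that there is no genuine obstacle here: the result is an immediate unwinding of the definition of the subdifferential, and convexity is not even needed beyond guaranteeing that $\partial f$ is the appropriate object to consider (the equivalence $f(x) \leq f(y)\ \forall y \iff 0 \in \partial f(x)$ holds as a pure matter of the definition). I would therefore present the proof in a single short paragraph handling both directions at once, emphasizing that $0 \in \partial f(x)$ and ``$x$ minimizes $f$'' are two names for the same inequality.
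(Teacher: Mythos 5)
Your proof is correct: the equivalence is indeed an immediate unwinding of the definition of the subdifferential with the subgradient $z=0$, and the paper itself omits the proof precisely because it is this standard one-line observation. Nothing is missing, and your remark that convexity plays no essential role in the equivalence itself is also accurate.
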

In order to counter the concavity introduced by a weakly convex penalty function, we will need the data fidelity term to be strongly convex \cite{via83p231}\footnote{Our $\rho$-strongly convex functions correspond to $\rho/2$-convex functions of \cite{via83p231}. }.
\begin{defn}
For $\rho \geq 0$, a function $f:\mathbb{R}^n \rightarrow \mathbb{R}$ is said to be $\rho$-strongly convex if 
\begin{equation*}
h(x) = f(x) - \frac{s}{2} \, \|x\|_2^2
\end{equation*}
is convex when $s \leq \rho$. \qed
\end{defn}
The following lemma, which is of interest in the proximal algorithm, will be used in the sequel. The lemma follows by the optimality conditions.
\begin{lem}\label{lem:prox}
If $h:\mathbb{R}^n \to \mathbb{R} $ is convex and there exists some $x$  and some $\beta > 0$ such that 
\begin{equation*}
h(x) \leq \beta \|z-x\|_2^2 + h(z) \text{ for all } z,
\end{equation*}
then $h$ achieves its minimum at $x$. \qed
\end{lem}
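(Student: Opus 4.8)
The plan is to reinterpret the hypothesis as the statement that $x$ globally minimizes an auxiliary convex function, and then remove the auxiliary term using the optimality characterization already recalled in this section (namely that a convex function is minimized at a point precisely when $0$ belongs to its subdifferential). Concretely, I would set $g(z) = h(z) + \beta\,\|z-x\|_2^2$, which is convex as the sum of the convex $h$ and the convex quadratic $q(z) = \beta\,\|z-x\|_2^2$. Since $q(x) = 0$, we have $g(x) = h(x)$, so the hypothesis $h(x) \leq \beta\,\|z-x\|_2^2 + h(z)$ for all $z$ is exactly $g(x) \leq g(z)$ for all $z$; that is, $x$ minimizes $g$.

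From here the optimality condition gives $0 \in \partial g(x)$. To transfer this to $h$, I would use the subdifferential sum rule, which applies without qualification here because $q$ is everywhere finite and differentiable, so that $\partial g(x) = \partial h(x) + \{\nabla q(x)\}$. As $\nabla q(x) = 2\beta\,(x-x) = 0$, this reads $\partial g(x) = \partial h(x)$, hence $0 \in \partial h(x)$, and applying the optimality condition in the reverse direction yields that $x$ minimizes $h$, as claimed.

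The only delicate point is the appeal to the sum rule, which is a standard convex-analysis fact from the references rather than something proved in the excerpt. If one prefers an argument relying solely on the convexity of $h$ granted in the statement, a fully self-contained alternative is available and is the route I would fall back on: for an arbitrary $z$ and $t \in (0,1]$, apply the hypothesis at the segment point $x + t\,(z-x)$, whose distance to $x$ scales as $t^2\,\|z-x\|_2^2$, and bound $h\bigl(x + t\,(z-x)\bigr) \leq (1-t)\,h(x) + t\,h(z)$ by convexity; after cancelling a common $h(x)$ term, dividing by $t > 0$, and letting $t \to 0^+$, the quadratic remainder vanishes and one is left with $h(x) \leq h(z)$. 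I expect the main thing requiring care to be exactly this justification—either citing the sum rule in the first route, or verifying the limiting step in the elementary route—while the surrounding manipulations are routine rearrangements.
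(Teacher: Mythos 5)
Your proposal is correct, and your primary route is exactly the argument the paper has in mind: the paper gives no detailed proof, stating only that ``the lemma follows by the optimality conditions,'' which is precisely your observation that $x$ minimizes $g(z)=h(z)+\beta\|z-x\|_2^2$, so $0\in\partial g(x)=\partial h(x)$ since the quadratic has vanishing gradient at $x$. Your elementary fallback via the segment point $x+t(z-x)$ and the limit $t\to 0^+$ is also valid and fully self-contained, but it is not needed to match the paper.
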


\section{Convergence of ISTA  via Majorization Minimization and SUMMA (Proof of Prop.~\ref{prop:MMIntro})}\label{sec:MM}
In this section, we briefly recall the MM scheme \cite{fig07p980,hun04p30} to show that ISTA achieves monotone descent for a weakly-convex penalty provided that the step size is small enough. We remark however that achieving descent does not automatically imply that the algorithm converges \cite{mey76p108,jac07p411}. In order to show convergence to a minimizer, we follow the approach presented in \cite{byr14p681}, based on  sequential unconstrained minimization algorithms (SUMMA) \cite{ByrneItOpt}. 

\subsection{Descent Property via MM}\label{sec:DescentMM}
Suppose that at the $k\thh$ iteration, we have the estimate $x^k$. 
Let us define,
\begin{align}
g(x,x^k) &= \frac{1}{2} \left\langle x -  x^k, \left(\alpha^{-1} I - H^T\,H\right)\,(x -  x^k)\right\rangle, \label{eqn:gk}\\
M(x,x^k) &= C(x) + g(x,x^k). \label{eqn:M}
\end{align}
Observe that if $\sigma_M< 1/\alpha$, then the matrix $\alpha^{-1} I - H^T\,H$ is positive definite.
Therefore  $g(x,x^k) \geq  0$. It also follows from the definition in \eqref{eqn:gk} that $g(x^k,x^k) = 0$. These observations imply that
\begin{enumerate}[(i)]
\item $M(x,x^k) \geq C(x)$,
\item $M(x^k,x^k) = C(x^k)$.
\end{enumerate}
Thus, starting from $x^k$, we can achieve descent in $C(x)$ by minimizing $M(x,x^k)$ with respect to $x$. After rearranging, $M(x,x^k)$ can be written as,
\begin{equation}\label{eqn:tholdmm}
M(x,x^k) 
=\frac{1}{2\alpha}\,\left\| x -  z^k \right\|_2^2  + P(x)  + \text{const.}
\end{equation}
where $z^k = x^k - \alpha\, H^T(Hx^k - y)$ and `const.' is independent of $x$.
Now since
\begin{equation*}
\rho \leq \sigma_m \leq \sigma_M < 1/ \alpha,
\end{equation*}
the function in \eqref{eqn:tholdmm} is strictly convex with respect to $x$ (for fixed $x^k$) and its unique minimizer is $T_{\alpha}(z^k)$.

From the foregoing discussion, we conclude that if ${\sigma_M< 1/ \alpha}$ and 
\begin{equation}\label{eqn:MM}
x^{k+1} = T_{\alpha}\Bigl( x^k + \alpha\,H^T\,\bigl(y-H\,x^k \bigr)  \Bigr),
\end{equation}
then $C(x^k)$  is a non-increasing sequence. However, this observation alone does not directly imply that $x^k$\, converges to a minimizer. We need a more elaborate argument for proving Prop.~\ref{prop:MMIntro}. 

\subsection{Convergence to a Minimizer via SUMMA}
We start with a key observation.
\begin{lem}\label{lem:M}
For  $g$, $M$, $x^k$\, defined as in \eqref{eqn:gk}, \eqref{eqn:M}, \eqref{eqn:MM}, if ${\sigma_M < 1/\alpha}$, then for any $x$,
\begin{subequations}\label{eqn:ineqs}
\begin{align}
M(x,x^k) - M(x^{k+1},x^k) &\geq \left( \frac{1}{2\alpha} - \frac{\rho}{2} \right)\,\|x - x^{k+1}\|_2^2 \label{ineq1}\\
 & \geq g(x,x^{k+1}).\label{ineq2}
\end{align}
\end{subequations}
\end{lem}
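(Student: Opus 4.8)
The plan is to reduce both inequalities to elementary properties of the strongly convex function that ISTA implicitly minimizes. The starting point is the rearrangement \eqref{eqn:tholdmm}, which shows that $M(x,x^k)$ equals $\frac{1}{2\alpha}\|x-z^k\|_2^2 + P(x)$ up to an additive constant independent of $x$. Since $P$ is $\rho$-weakly convex, Definition~\ref{defn:weaklyconvex} lets me write $P(x) = \tilde{P}(x) - \frac{\rho}{2}\|x\|_2^2$ with $\tilde{P}$ convex. Substituting this, the coefficient of $\|x\|_2^2$ in $M(\cdot,x^k)$ becomes $\frac{1}{2\alpha} - \frac{\rho}{2}$, which is strictly positive because $\rho \leq \sigma_m \leq \sigma_M < 1/\alpha$. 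Hence $M(\cdot,x^k)$ is convex (indeed strongly convex with modulus $1/\alpha - \rho$), and by the discussion following \eqref{eqn:tholdmm} its unique minimizer is $x^{k+1}=T_\alpha(z^k)$.

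To establish \eqref{ineq1}, I would introduce the auxiliary function
\begin{equation*}
h(x) = M(x,x^k) - \left(\frac{1}{2\alpha} - \frac{\rho}{2}\right)\|x - x^{k+1}\|_2^2 .
\end{equation*}
The key observation is that subtracting this quadratic exactly cancels the $\frac{1}{2\alpha}$ curvature down to $\frac{\rho}{2}$, so that the coefficient of $\|x\|_2^2$ in $h$ is again $\frac{\rho}{2}$; combined with the weak convexity of $P$ this makes $h$ convex. Next I would argue that $x^{k+1}$ minimizes $h$: the subtracted quadratic is smooth with gradient vanishing at $x^{k+1}$, and $0 \in \partial M(\cdot,x^k)(x^{k+1})$ since $x^{k+1}$ minimizes $M(\cdot,x^k)$, so the smooth sum rule gives $0 \in \partial h(x^{k+1})$. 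By the subdifferential characterization of minimizers, $h(x) \geq h(x^{k+1}) = M(x^{k+1},x^k)$ for all $x$, which is precisely \eqref{ineq1}.

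Inequality \eqref{ineq2} is then a direct eigenvalue estimate. Writing $w = x - x^{k+1}$ in \eqref{eqn:gk} gives $g(x,x^{k+1}) = \tfrac{1}{2}\bigl(\alpha^{-1}\|w\|_2^2 - \langle w, H^T H\,w\rangle\bigr)$, and since $\langle w, H^T H\,w\rangle \geq \sigma_m \|w\|_2^2$ we obtain $g(x,x^{k+1}) \leq \tfrac{1}{2}(\alpha^{-1} - \sigma_m)\|w\|_2^2$. The hypothesis $\rho \leq \sigma_m$ then yields $\tfrac{1}{2}(\alpha^{-1}-\sigma_m) \leq \tfrac{1}{2}(\alpha^{-1}-\rho) = \frac{1}{2\alpha} - \frac{\rho}{2}$, which chains onto the right-hand side of \eqref{ineq1}.

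I expect the main obstacle to be the bookkeeping in \eqref{ineq1}, where one must verify that the quadratic being subtracted has exactly the right coefficient so that the residual function $h$ remains convex despite the nonconvexity of $P$; this is the point where the weak-convexity modulus $\rho$ and the proximal curvature $1/\alpha$ interact, and where the standing assumption $\rho \leq \sigma_m < 1/\alpha$ is used to keep $\frac{1}{2\alpha}-\frac{\rho}{2}>0$. By contrast, \eqref{ineq2} is a routine spectral bound once the hypothesis $\rho \leq \sigma_m$ is invoked.
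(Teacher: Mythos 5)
Your proof is correct, and for \eqref{ineq1} it takes a genuinely different route from the paper. The paper first isolates an auxiliary inequality (its Lemma~\ref{lem:aux}): writing $P_\rho = P + \tfrac{\rho}{2}\|\cdot\|_2^2$, it extracts a subgradient of $P_\rho$ at $\hat x = T_\alpha(z)$ from the optimality conditions of the prox subproblem, obtaining
\begin{equation*}
P(x) - P(\hat x) \geq -\Bigl\{ \tfrac{\rho}{2}\|x-\hat x\|_2^2 + \tfrac{1}{\alpha}\langle \hat x - z, x - \hat x\rangle \Bigr\},
\end{equation*}
and then proves \eqref{ineq1} by an explicit term-by-term expansion of $M(x,x^k)-M(x^{k+1},x^k)$ in which the cross terms cancel. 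You instead observe directly that $M(\cdot,x^k)$ is $(1/\alpha-\rho)$-strongly convex (the weak convexity of $P$ eats $\rho/2$ of the $\tfrac{1}{2\alpha}$ curvature) and is minimized at $x^{k+1}$, so the quadratic growth bound at the minimizer is immediate; your function $h$ and the smooth sum rule make this rigorous, since $\nabla q(x^{k+1})=0$ for the subtracted quadratic $q$ and hence $0\in\partial h(x^{k+1})$. The two arguments encode the same mathematical fact, but your version is shorter and makes transparent why the constant $\tfrac{1}{2\alpha}-\tfrac{\rho}{2}$ appears, whereas the paper's version produces Lemma~\ref{lem:aux} as a standalone tool that it reuses later (in Lemma~\ref{lem1}, for the descent property of Prop.~\ref{prop:FBgeneralDescent}); if you adopted your route throughout, that later proof would need its own argument. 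Your treatment of \eqref{ineq2} is the same spectral bound as the paper's ($H^TH \succeq \rho I$ since $\rho\leq\sigma_m$), just written with $\sigma_m$ as an intermediate step.
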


For the proof of this lemma, we use the following auxiliary result, which we will also refer to later.
\begin{lem}\label{lem:aux}
For a given $z$, let $\hat{x} = T_{\alpha}(z)$, where $P$\, is $\rho$-weakly convex. Then, for any $x$,
\begin{equation}\label{eqn:ineqP}
P(x) - P(\hat{x})  \geq -\left\{ \frac{\rho}{2}\|x  - \hat{x}\|_2^2 + \frac{1}{\alpha}\langle \hat{x} - z, x - \hat{x} \rangle \right\}.
\end{equation}
\begin{proof}
Let $P_{\rho}(x) = P(x) + \frac{\rho}{2} \|x\|_2^2$. Note that $P_{\rho}$ is convex. From the definition of $T_{\alpha}$ in \eqref{eqn:T} we have,
\begin{align*}
\hat{x} &= \arg \min_x \frac{1}{2\alpha} \left\| x  - z\right \|_2^2  + P(x) \\
& = \arg \min_x \left(\frac{1}{2\alpha} - \frac{\rho}{2} \right) \| x  - z \|_2^2  - \rho \langle x,z\rangle +  P_{\rho}(x)
\end{align*}
By the optimality conditions, we obtain
\begin{equation*}
\left(\frac{1}{\alpha} - \rho \right)(z- \hat{x}) + \rho z  \in   \partial P_{\rho}(\hat{x}).
\end{equation*}
But by the definition of subdifferential, this implies that
\begin{equation*}
P_{\rho}(x) \geq P_{\rho}(\hat{x})  + \left \langle \left(\alpha^{-1} - \rho \right)(z- \hat{x}) + \rho z,\, x - \hat{x} \right \rangle.
\end{equation*}
Plugging in the definition of $P_{\rho}$ and rearranging, we obtain \eqref{eqn:ineqP}.
\end{proof}
\end{lem}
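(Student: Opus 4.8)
The plan is to convert the weakly convex $P$ into a genuinely convex function and then read the stated bound directly off the convex subgradient inequality. I would first set $P_{\rho}(x) = P(x) + \frac{\rho}{2}\|x\|_2^2$; by Definition~\ref{defn:weaklyconvex} this is convex, so the subdifferential machinery of Section~\ref{sec:pre} applies to it even though it does not apply to $P$ itself. This reduction is the conceptual crux of the argument: because $P$ need not be convex, one cannot write an optimality condition in terms of $\partial P$, and everything hinges on transferring the problem to $P_{\rho}$.

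Next I would rewrite the minimization defining $\hat{x} = T_{\alpha}(z)$ so that $P_{\rho}$ appears explicitly. Substituting $P(x) = P_{\rho}(x) - \frac{\rho}{2}\|x\|_2^2$ into the objective $\frac{1}{2\alpha}\|x - z\|_2^2 + P(x)$ and collecting the terms quadratic in $x$, the objective equals, up to a constant independent of $x$,
\[
\left(\frac{1}{2\alpha} - \frac{\rho}{2}\right)\|x - z\|_2^2 - \rho\,\langle x, z\rangle + P_{\rho}(x).
\]
The first two terms are smooth, and for the step sizes of interest one has $\alpha^{-1} > \rho$ (indeed $\alpha^{-1} > (\sigma_M+\rho)/2 \geq \rho$), which makes this objective strictly convex and $T_{\alpha}$ single valued. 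Hence $\hat{x}$ is characterized by the convex optimality condition $0 \in \nabla\phi(\hat{x}) + \partial P_{\rho}(\hat{x})$, where $\phi$ is the smooth part. Computing $\nabla\phi(\hat{x}) = (\alpha^{-1}-\rho)(\hat{x}-z) - \rho z$ gives the inclusion $\left(\alpha^{-1} - \rho\right)(z - \hat{x}) + \rho\, z \in \partial P_{\rho}(\hat{x})$.

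From here the argument is routine. Writing out the defining subgradient inequality for $P_{\rho}$ at $\hat{x}$ with this particular subgradient yields, for every $x$, a lower bound on $P_{\rho}(x) - P_{\rho}(\hat{x})$ in terms of $\langle \left(\alpha^{-1} - \rho\right)(z - \hat{x}) + \rho z,\, x - \hat{x}\rangle$. I would then substitute $P_{\rho} = P + \frac{\rho}{2}\|\cdot\|_2^2$ back on both sides and simplify the quadratic remainder using the identity $\|x\|_2^2 - \|\hat{x}\|_2^2 = 2\langle \hat{x}, x - \hat{x}\rangle + \|x - \hat{x}\|_2^2$. The cross terms proportional to $\rho\langle \hat{x}, x - \hat{x}\rangle$ cancel, leaving exactly $-\frac{\rho}{2}\|x - \hat{x}\|_2^2 - \frac{1}{\alpha}\langle \hat{x} - z, x - \hat{x}\rangle$, which is \eqref{eqn:ineqP}.

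The only genuine decision, rather than an obstacle, is this initial reduction to $P_{\rho}$; once it is in place the statement is a purely pointwise property of the proximal map, requiring no compactness, monotonicity, or iteration structure. The one place to be careful is the final rearrangement, where the several $\rho$-dependent quadratic and bilinear pieces must be recombined correctly so that the squared-norm term emerges with coefficient $\rho/2$ and the inner product with coefficient $1/\alpha$ and the correct sign.
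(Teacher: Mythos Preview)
Your proposal is correct and follows essentially the same route as the paper: introduce $P_{\rho}=P+\tfrac{\rho}{2}\|\cdot\|_2^2$, rewrite the proximal objective so that $P_{\rho}$ appears, read off the inclusion $(\alpha^{-1}-\rho)(z-\hat{x})+\rho z\in\partial P_{\rho}(\hat{x})$ from the convex optimality conditions, apply the subgradient inequality, and substitute $P_{\rho}$ back to recover \eqref{eqn:ineqP}. Your added remarks on strict convexity of the reformulated objective and on the cancellation in the final rearrangement are consistent with the paper's computation.
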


Using this lemma, we obtain the proof of Lemma~\ref{lem:M} as follows.
\begin{proof}[Proof of Lemma~\ref{lem:M}]
Let $f(x) = \frac{1}{2}\|y - Hx\|_2^2$. Observe that $\nabla f(x) = H^T\,(Hx - y)$.
Using \eqref{eqn:tholdmm}, and Lemma~\ref{lem:aux} with ${x^{k+1} = T_{\alpha}\bigl(x^k - \alpha \nabla f(x^k)\bigr)}$, we obtain,
\begin{align*}
M&(x,x^k) - M(x^{k+1},x^k)  \\
&= \left\{ \frac{1}{2\alpha} \Bigl( \|x - x^k\|_2^2 - \|x^{k+1} - x^k\|_2^2 \Bigr) \right\}  \\ &\quad + \bigl\langle x - x^{k+1} , \nabla f(x^k)\bigr\rangle
+\Bigl\{ P(x) - P(x^{k+1}) \Bigr\}   \\
&\geq \left\{ \frac{1}{2\alpha} \|x - x^{k+1}\|_2^2 + \frac{1}{\alpha} \langle x - x^{k+1},x^{k+1} - x^k\rangle \right\}   \\
& \quad + \langle x - x^{k+1} , \nabla f(x^k)\rangle  \\
& \quad - \left\{ \frac{\rho}{2}\|x  - x^{k+1} \|_2^2 \right. \\
& \quad  \left. +\frac{1}{\alpha}\langle x^{k+1} - x^k + \alpha \nabla f(x^k), x - x^{k+1} \rangle \right\}\\
 &= \left( \frac{1}{2\alpha} - \frac{\rho}{2} \right)\,\|x - x^{k+1}\|_2^2.
\end{align*}
To see \eqref{ineq2}, note that $H^T\,H - \rho I \geq 0$ and observe 
\begin{align*}
\frac{1}{\alpha} I - H^T\,H &= \left(\frac{1}{\alpha} - \rho \right)\,I - \bigl(H^T\,H - \rho I \bigr)\\
&\leq  \left(\frac{1}{\alpha} - \rho \right)\,I.
\end{align*}
\end{proof}

We are now ready to present the proof of convergence. Although there are some variations, the main idea follows the proof of Theorem~4.1 in \cite{byr14p681}.
\begin{proof}[Proof of convergence for Prop.~\ref{prop:MMIntro}]
By assumption, $C$ is proper and its set of minimizers is non-empty. These imply that ${\inf_x C(x) = c > -\infty}$. Combining this the discussion in Sec.~\ref{sec:DescentMM}, we deduce that $C(x^k)$\, is a non-increasing sequence which is bounded from below and therefore it converges to some $b\geq c$.

Since the set of minimizers of $C(x)$ is non-empty by assumption, we can find $\hat{x}$\, such that $C(\hat{x})\leq b$. Consider ${d_k(\hat{x}) =  M(\hat{x},x^k) - M(x^{k+1},x^k)}$. We know by the definition of $x^{k+1}$ that $d_k(\hat{x})$ is non-negative. We now show that it is also non-increasing with $k$. Note that by Lemma~\ref{lem:M}, we have $g(\hat{x},x^k) \leq d_{k-1}(\hat{x})$. Using this, we obtain
\begin{align*}
d_k(\hat{x}) &=M(\hat{x},x^k) - M(x^{k+1},x^k)\\
&= g(\hat{x},x^k) + C(\hat{x})  - g(x^{k+1},x^k) - C(x^{k+1}) \\
&\leq d_{k-1}(\hat{x}) + \Bigl\{ C(\hat{x})  - g(x^{k+1},x^k) - C(x^{k+1}) \Bigr\}  \\
& \leq d_{k-1}(\hat{x}).
\end{align*}
where the last line follows because $C(\hat{x})\leq C(x^k)$ for all $k$\, and $g(x^{k+1},x^k)$ is non-negative.

Again by Lemma~\ref{lem:M}, we can now conclude that
\begin{equation*}
\| \hat{x} - x^k \|_2^2 \leq \left( \frac{1}{2\alpha} - \frac{\rho}{2} \right)^{-1}\,\Bigl( M(\hat{x},x^0) - M(x^{1},x^0) \Bigr),
\end{equation*}
for all $k$. Thus $x^k$\, is a bounded sequence and it has a convergent subsequence by the Bolzano-Weierstrass theorem \cite{Rudin}. Let $x^*$\, be the limit of a convergent subsequence $x^{k_n}$. Observe that since $C(x^k)$ is non-increasing and $C(\cdot)$ is lower semi-continuous, we have $C(x^*) \leq  b$. Also, for a given $\epsilon>0$, we can find some $N$ such that if $n \geq N$, then 
\begin{equation*}
\left( \frac{1}{2\alpha} - \frac{\rho}{2} \right) \|x^* - x^{k_n}\|_2^2 \leq  \epsilon.
\end{equation*}
It thus follows by \eqref{ineq2}, along with $C(x^*) \leq C(x^{k_n+1})$ and the non-negativity of $g$ that 
\begin{align*}
& M(x^*, x^{k_n}) - M(x^{k_n+1},x^{k_n}) \\
 & = C(x^*) +  g(x^*,x^{k_n}) - C(x^{k_n+1}) -  g(x^{k_n+1},x^{k_n}) \\
 & \leq g(x^*,x^{k_n}) \\
  &\leq \epsilon.
\end{align*} 
 But since $\bigl[M(x^*, x^{k}) - M(x^{k+1},x^{k})\bigr]$ is a non-increasing, non-negative sequence of real numbers, this implies that the whole sequence converges to zero. In view of \eqref{ineq1}, we thus conclude that
\begin{equation*}
\lim_{k\to \infty}  \|x^* - x^{k+1}\|_2^2 = 0.
\end{equation*}
Therefore, the whole sequence converges to $x^*$. 

What remains is to show that $x^*$\, is actually a global minimizer of $C$. For that, it suffices to show $b = c$. 
Let $z$\, be a minimizer of $C$, i.e., $C(z) = c$. From Lemma~\ref{lem:M}, we obtain
\begin{align*}
C(z) + g(z,x^k) - &C(x^{k+1}) - g(x^{k+1},x^k)  \\ &= M(z,x^k) - M(x^{k+1},x^k)  \\
 & \geq g(z,x^{k+1}).
\end{align*}
Rearranging, we have,
\begin{align*}
  g(z,x^k) - g(z,x^{k+1}) &\geq   C(x^{k+1}) - C(z) + g(x^{k+1},x^k)\\
  & \geq b - c.
\end{align*}
Now if $b > c$, this implies that the sequence $g(z,x^{k})$\, decreases without bound, but this cannot happen since $g(z,x^{k}) \geq 0$ for all $k$. Therefore we must have $b \leq c$. But we already know that $c \leq b$, so it must be $b = c$.
\end{proof}

\section{ISTA as Fixed-Point Iterations of an Operator}\label{sec:Operator}
When $P(x)$ is convex, the forward-backward splitting algorithm \cite{com05p168,Bauschke} leads to iterations that are of the same form as \eqref{eqn:MM}. However, the results of \cite{com05p168} imply that the maximum step size allowed by MM can in fact be doubled while still ensuring convergence. This in turn accelerates convergence significantly. 
We now investigate this issue for a weakly-convex penalty $P(x)$.

In order to simplify our analysis, we decompose the operator in \eqref{eqn:MM}. We define
\begin{equation}\label{eqn:U}
U_{\alpha}(x) =  \alpha\,H^T\,y + \bigl( I  - \alpha\, H^T\,H)\,x,
\end{equation}
and rewrite the iterations in \eqref{eqn:MM} as,
\begin{equation*}
x^{k+1} = T_{\alpha}\Bigl( U_{\alpha}\bigl(x^k \bigr) \Bigr).
\end{equation*}

In the following, we will first study the fixed points of the composite operator $T_{\alpha}\,U_{\alpha}$ and show an equivalence with the minimizers of the cost $C(x)$. Then, we study the properties of the two operators $T_{\alpha}$ and $U_{\alpha}$. Under a strict convexity assumption, we will see that the composition is actually a contraction mapping. If we lift the strictness restriction from the convexity assumption, the composite operator turns out to be averaged (see Sec.~\ref{sec:avg}).

\subsection{Fixed Points of the Algorithm}\label{sec:fixedpt}
We now establish a relation between the fixed points of $T_{\alpha}\,U_{\alpha}$ and the minima of $C(x)$. Specifically, our goal in this subsection is to show the following result.
\begin{prop}\label{prop:fixed}
Suppose $P(x)$ is $\rho$-weakly convex, $T_{\alpha}$ is as defined in \eqref{eqn:T} and $\alpha\,\rho < 1$. Then, $x = T_{\alpha}\Bigl( U_{\alpha}(x) \Bigr)$ if and only if $x$ minimizes $C(x)$ in \eqref{eqn:C}.
\end{prop}

Instead of proving Prop.~\ref{prop:fixed} directly, we will prove a more general form using convex analysis methods. This general form will also be referred to in the proof of Prop.~\ref{prop:FBgeneralNC} in Sec.~\ref{sec:GeneralDataFid}.
\begin{prop}\label{prop:fixedGeneral}
Suppose $P:\mathbb{R}^n \to \mathbb{R}$ is a $\rho$-weakly convex function, and $f:\mathbb{R}^n \to \mathbb{R}$ is a  differentiable, $\rho$-strongly convex function. Suppose also that $\alpha\,\rho <1$. Under these conditions, 
\begin{equation}\label{eqn:fixed}
x = T_{\alpha}\,\Bigl( x- \alpha\,\nabla f(x) \Bigr),
\end{equation}
if and only if $x$ minimizes $f+P$.
\begin{proof}
($\Rightarrow$) Suppose \eqref{eqn:fixed} holds. We will show that $x$ minimizes $f + P$.
Let $u = x - \alpha\,\nabla f (x)$. By the definition of $T_{\alpha}$, $x = T_{\alpha}(u)$ means that
\begin{equation*}
\frac{1}{2}\,\|x-  u \|_2^2 + \alpha\,P(x) \leq \frac{1}{2}\,\|z -  u \|_2^2 + \alpha\,P(z), \text{ for all } z.
\end{equation*}
Noting that $x - u = \alpha\,\nabla f (x)$ and $z-u = (z-x) + \alpha\,\nabla f(x)$, we can rewrite this as
\begin{multline*}
\frac{1}{2}\,\|\alpha\,\nabla f(x) \|_2^2 + \alpha\,P(x) \\ \leq \frac{1}{2}\,\|z -  x \|_2^2 + \frac{1}{2}\,\|\alpha\,\nabla f(x) \|_2^2 \\ + \langle z-x, \alpha\,\nabla f(x) \rangle + \alpha\,P(z), \text{ for all } z.
\end{multline*}
Cancelling  $\frac{1}{2}\,\|\alpha\,\nabla f(x) \|_2^2$ from both sides and noting that \\ $\langle z-x, \alpha\,\nabla f(x) \rangle \leq \alpha\,f(z) - \alpha\,f(x)$ (because $f$ is convex), we obtain
\begin{equation*}
\alpha\,P(x) \leq \frac{1}{2}\,\|z -  x \|_2^2 + \alpha\,f(z) - \alpha\,f(x) + \alpha\,P(z), \text{ for all } z.
\end{equation*}
Rearranging,
\begin{equation*}
f(x) + P(x) \leq \frac{1}{2\,\alpha}\,\|z -  x \|_2^2 + f(z) + P(z), \text{ for all } z.
\end{equation*}
Since $f + P$ is convex, by Lemma~\ref{lem:prox}, we conclude that $x$ minimizes $f+P$.

($\Leftarrow$) Assume that $x$ minimizes $h = f+P$. Let $P_{\rho}(t) =  P(t) + (\rho/2)\,\|t\|_2^2$ and $f_{\rho}(t) = f(t) - (\rho/2)\,\|t\|_2^2$. Note that by assumption $f_{\rho}$ and $P_{\rho}$ are both convex and $h = f_{\rho} + P_{\rho}$. Since $x$ minimizes $h$, we have,
\begin{equation*}
0 \in  \underbrace{\{\nabla f(x) - \rho\,x\}}_ {\partial f_{\rho}(x)} + \partial P_{\rho}(x)
\end{equation*}
or,
\begin{equation*}
\rho\,x - \nabla f(x)\in \partial P_{\rho}(x)
\end{equation*}
Adding any multiple of $x$ to both sides, we find that for $s \geq \rho$, and $P_{s}(t) = P(t) + (s/2)\,\|t\|_2^2$, we have
\begin{equation*}
s\,x - \nabla f(x)\in \partial P_s(x)
\end{equation*}
By the convexity of $P_s$ we then obtain,
\begin{equation*}
P_s(x) + \langle z-x, s\,x - \nabla f(x) \rangle \leq P_s(z), \text{ for all } z.
\end{equation*}
Rearranging, we have
\begin{multline*}
\frac{s}{2}\|x\|_2^2 - \langle x, s\,x - \nabla f(x) \rangle + P(x) \\ \leq \frac{s}{2}\|z\|_2^2 - \langle z, s\,x - \nabla f(x) \rangle + P(z), \text{ for all } z.
\end{multline*}
Equivalently, for $s\geq \rho$,
\begin{multline*}
\frac{1}{2}\Bigl\|x - \bigl(x - s^{-1}\nabla f(x)\bigr) \Bigr\|_2^2 + \frac{1}{s}\,P(x) \\ \leq \frac{1}{2}\Bigl\|z - \bigl(x - s^{-1}\nabla f(x)\bigr) \Bigr\|_2^2 + \frac{1}{s}\,P(z), \text{ for all } z.
\end{multline*}
Now let $\alpha = 1/s$. The inequality above may be written as,
\begin{multline}\label{eqn:tempin1}
\frac{1}{2}\Bigl\|x - \bigl(x - \alpha\,\nabla f(x)\bigr) \Bigr\|_2^2 + \alpha\,P(x) \\ \leq \frac{1}{2}\Bigl\|z - \bigl(x - \alpha\,\nabla f(x)\bigr) \Bigr\|_2^2 + \alpha\,P(z), \text{ for all } z,
\end{multline}
for $\alpha\,\rho \leq 1$.
Note that equality in \eqref{eqn:tempin1} may be achieved by setting $z = x$. But we know that for $\alpha \,\rho < 1$, the right hand side is uniquely minimized by $z = T_{\alpha}\bigl(x - \alpha\,\nabla f(x)\bigr)$. Thus, $x = T_{\alpha}\bigl(x - \alpha\,\nabla f(x)\bigr)$ for $\alpha\,\rho < 1$.
\end{proof}
\end{prop}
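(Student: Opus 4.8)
The plan is to exploit the fact that, although $P$ is only weakly convex, the hypotheses force $h := f + P$ to be genuinely convex: writing $f_{\rho}(x) = f(x) - \frac{\rho}{2}\|x\|_2^2$ and $P_{\rho}(x) = P(x) + \frac{\rho}{2}\|x\|_2^2$, both summands are convex and $h = f_{\rho} + P_{\rho}$. Global minimizers of $h$ are therefore characterized by the first-order condition $0 \in \partial h$, and the task reduces to matching this optimality condition with the fixed-point equation \eqref{eqn:fixed}. I would prove the two implications separately, and I expect the interesting hypothesis $\alpha\,\rho < 1$ to enter only in one of them.

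For the forward direction (a fixed point minimizes $h$), I would start from the defining minimization of $T_{\alpha}$ in \eqref{eqn:T}: with $u = x - \alpha\,\nabla f(x)$, the identity $x = T_{\alpha}(u)$ means $\frac{1}{2}\|x - u\|_2^2 + \alpha\,P(x) \leq \frac{1}{2}\|z-u\|_2^2 + \alpha\,P(z)$ for all $z$. Substituting $u = x - \alpha\,\nabla f(x)$, the terms $\frac{1}{2}\|\alpha\,\nabla f(x)\|_2^2$ cancel on both sides, and using convexity of $f$ in the form $\langle z - x, \nabla f(x)\rangle \leq f(z) - f(x)$ I would reach $h(x) \leq \frac{1}{2\alpha}\|z - x\|_2^2 + h(z)$ for all $z$. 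Since $h$ is convex, Lemma~\ref{lem:prox} applied with $\beta = 1/(2\alpha)$ then yields that $x$ minimizes $h$. Note that this direction uses only convexity of $f$ and of $h$, and neither strong convexity nor the bound on $\alpha\,\rho$.

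For the reverse direction, I would translate the optimality condition into the variational statement defining $T_{\alpha}$. From $0 \in \partial h(x) = \{\nabla f(x) - \rho\,x\} + \partial P_{\rho}(x)$ we get $\rho\,x - \nabla f(x) \in \partial P_{\rho}(x)$. The key manipulation is to add a multiple of $x$ to both sides: for any $s \geq \rho$, writing $P_s(t) = P(t) + \frac{s}{2}\|t\|_2^2$ gives $s\,x - \nabla f(x) \in \partial P_s(x)$, because $\partial P_s(x) = \partial P_{\rho}(x) + (s-\rho)\,x$. Feeding this into the subgradient inequality for the convex function $P_s$ and completing the square in $z$, I would arrive at $\frac{1}{2}\|x - u\|_2^2 + \alpha\,P(x) \leq \frac{1}{2}\|z - u\|_2^2 + \alpha\,P(z)$ for all $z$, with $\alpha = 1/s$ and $u = x - \alpha\,\nabla f(x)$ — which is precisely the objective in \eqref{eqn:T} compared at $x$ versus an arbitrary $z$. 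This shows that $x$ attains the minimum defining $T_{\alpha}(u)$.

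I expect the main obstacle to be the final uniqueness step, which is exactly where the hypothesis $\alpha\,\rho < 1$ is needed. The inequality above shows only that $x$ is \emph{a} minimizer of $z \mapsto \frac{1}{2\alpha}\|z-u\|_2^2 + P(z)$; to conclude $x = T_{\alpha}(u)$ I must know that this minimizer is unique. Writing $P(z) = P_{\rho}(z) - \frac{\rho}{2}\|z\|_2^2$, that objective equals $\bigl(\frac{1}{2\alpha} - \frac{\rho}{2}\bigr)\|z\|_2^2 + (\text{affine in } z) + P_{\rho}(z)$ up to a constant, whose quadratic coefficient $\frac{1}{2\alpha} - \frac{\rho}{2}$ is strictly positive precisely when $\alpha\,\rho < 1$; since $P_{\rho}$ is convex, the whole objective is then strictly convex and hence has a unique minimizer. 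Thus the weak/strong-convexity parameter must be strictly dominated by the reciprocal step-size for $T_{\alpha}$ to be single-valued, and this is the only place the strict inequality (rather than $\alpha\,\rho \leq 1$, which still gives $x$ as a minimizer) is genuinely used.
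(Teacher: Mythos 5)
Your proposal is correct and follows essentially the same route as the paper: the forward direction unpacks the prox inequality, cancels the $\frac{1}{2}\|\alpha\,\nabla f(x)\|_2^2$ terms, invokes convexity of $f$ and Lemma~\ref{lem:prox}; the reverse direction passes to $P_s$ for $s\geq\rho$, completes the square, and concludes via uniqueness of the prox minimizer under $\alpha\,\rho<1$. Your explicit justification of that uniqueness step (strict convexity of the prox objective via the coefficient $\frac{1}{2\alpha}-\frac{\rho}{2}>0$) is a detail the paper only asserts, but it is the same argument.
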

Prop.~\ref{prop:fixed} is a corollary of this proposition. This can be seen by taking $f(x) = \frac{1}{2}\,\|y - H x\|_2^2$ and noting that it is $\rho$-strongly convex since $\rho \leq \sigma_m$.

\subsection{Threshold Operators Associated with  Weakly Convex Penalties}\label{sec:THold}

We now study the operator $T_{\alpha}$. We only assume that $T_{\alpha}$ is associated with a $\rho$-weakly convex penalty $P$ via \eqref{eqn:T}. 

\begin{defn}
An operator $S : \mathbb{R}^n \to \mathbb{R}^n$ is said to be non-expansive if,
\begin{equation*}
\|S(x) - S(z) \|_2 \leq  \|x -z \|_2.
\end{equation*}
\end{defn}
We will make use of the following result (see \cite{combettes_chp, Bauschke} for instance).
\begin{lem}\label{lem:nonexp}
Suppose $q:\mathbb{R}^n\to\mathbb{R}$ is convex and the operator $S(x)$ is defined as
\begin{equation*}
S(x) = \arg \min_t \,\frac{1}{2}\,\|x - t\|_2^2 + q(t).
\end{equation*}
Then,
\begin{equation*}
\| S(x) - S(z) \|_2 \leq \|x -z \|_2. 
\end{equation*} 
\qed
\end{lem}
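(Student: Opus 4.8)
The plan is to prove the firm non-expansiveness of the proximity operator and extract the stated inequality as a consequence, using only the optimality condition and the monotonicity of the subdifferential of a convex function. First, I would note that since the objective $\frac{1}{2}\|x-t\|_2^2 + q(t)$ is strictly convex in $t$ (the quadratic term is strictly convex and $q$ is convex), the minimizer is unique, so $S$ is a well-defined single-valued operator. Applying the optimality condition from the earlier proposition on minimizers of convex functions, $S(x)$ is characterized by $0 \in -(x - S(x)) + \partial q(S(x))$, i.e.,
\begin{equation*}
x - S(x) \in \partial q\bigl(S(x)\bigr), \qquad z - S(z) \in \partial q\bigl(S(z)\bigr).
\end{equation*}

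Next, I would set $u = S(x)$ and $v = S(z)$ and exploit the subgradient inequality from the definition of $\partial q$ at both points. Writing the defining inequality for the subgradient $x - u$ at $u$ evaluated at $v$, and for the subgradient $z - v$ at $v$ evaluated at $u$, and then adding the two inequalities, the values of $q$ cancel and I obtain the monotonicity relation
\begin{equation*}
\bigl\langle u - v,\, (x - u) - (z - v) \bigr\rangle \geq 0.
\end{equation*}
Rearranging the inner product by splitting $(x-u)-(z-v) = (x-z) - (u-v)$ yields
\begin{equation*}
\|u - v\|_2^2 \leq \langle u - v,\, x - z \rangle.
\end{equation*}

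Finally, I would apply the Cauchy--Schwarz inequality to the right-hand side, giving $\|u-v\|_2^2 \leq \|u-v\|_2\,\|x-z\|_2$, and divide through by $\|u-v\|_2$ (the inequality being trivial when $u=v$) to conclude $\|S(x) - S(z)\|_2 \leq \|x - z\|_2$. I do not expect a serious obstacle here, as the result is classical; the only point requiring care is the clean derivation of the subdifferential monotonicity from the two subgradient inequalities, which is where the cross-substitution of evaluation points must be done correctly. The quadratic cancellation is routine, and the passage from the monotonicity estimate to the norm bound via Cauchy--Schwarz is the natural final step.
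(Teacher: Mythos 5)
Your proof is correct and follows exactly the route the paper indicates for this lemma (the paper itself gives no detailed proof, only the remark that it follows from the optimality conditions and the monotonicity of the subgradient): you derive $x - S(x) \in \partial q(S(x))$, add the two subgradient inequalities to get $\|S(x)-S(z)\|_2^2 \leq \langle S(x)-S(z),\, x-z\rangle$, and finish with Cauchy--Schwarz. No gaps; this matches the intended argument.
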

This can be shown by making use of the optimality conditions and monotonicity of the subgradient.
\begin{prop}\label{prop:T}
Suppose $P(x)$ is $\rho$-weakly convex and ${\alpha \,\rho < 1}$. Then,
\begin{equation}\label{eqn:propT}
\| T_{\alpha}(x) - T_{\alpha}(z) \|_2 \leq \frac{1}{1-\alpha\,\rho} \| x - z \|_2.
\end{equation}
\begin{proof}
Note that the function
\begin{equation*}
\frac{\rho}{2} \|x\|_2^2 + P(x)
\end{equation*}
is convex. Therefore,
\begin{equation*}
S(x) = \arg \min_t \frac{1}{2} \|x - t\|_2^2 + c\,\Bigl( \frac{\rho}{2} \|t\|_2^2 + P(t) \Bigr)
\end{equation*}
is non-expansive by Lemma~\ref{lem:nonexp} for any $c>0$. But we have
\begin{align*}
S(x) &= \arg \min_t \frac{1}{2} \|x - t\|_2^2 + c\,\Bigl( \frac{\rho}{2} \|t\|_2^2 + P(t) \Bigr)\\
&= \arg \min_t \frac{1+c\rho}{2} \|t\|_2^2 -  \langle x,t\rangle + c\,P(t)\\
&= \arg \min_t \frac{1}{2} \left\|t - \frac{1}{1+c\rho} x \right\|_2^2 + \frac{c}{1+c\rho}\,P(t) \\
&= T_{c\,(1+c\rho)^{-1}} \left( \frac{1}{1+c\rho} x \right)
\end{align*}
Therefore we deduce that
\begin{multline}\label{eqn:tmp1}
\left\| T_{c\,(1+c\rho)^{-1}} \left( \frac{1}{1+c\rho} x \right) - T_{c\,(1+c\rho)^{-1}} \left( \frac{1}{1+c\rho} z \right) \right\|_2 \\ = \|S(x) - S(z)\|_2 \leq  \| x - z\|_2.
\end{multline}
Now set $\alpha = c\,(1+c\rho)^{-1}$. Observe that $\alpha\,\rho < 1$ for any $c>0$. Solving for $c$, we have $c = \alpha\,(1-\alpha\,\rho)^{-1}$. Plugging this in \eqref{eqn:tmp1}, we obtain
\begin{equation*}
\left\| T_{\alpha} \Bigl( (1-\alpha\rho) x \Bigr) - T_{\alpha} \Bigl( (1-\alpha\rho) x \Bigr) \right\|_2 \leq \| x - z\|_2.
\end{equation*}
Making a change of variables, we finally obtain \eqref{eqn:propT}.
\end{proof}
\end{prop}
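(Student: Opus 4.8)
The plan is to reduce $T_\alpha$ to a genuine proximity operator of a \emph{convex} function, so that the non-expansivity supplied by Lemma~\ref{lem:nonexp} applies, and then to track how the rescaling needed for this reduction inflates the Lipschitz constant. The starting observation is that although $P$ itself is only $\rho$-weakly convex, the function $q_c(t) = c\bigl(\tfrac{\rho}{2}\|t\|_2^2 + P(t)\bigr)$ is convex for every $c > 0$, since $\tfrac{\rho}{2}\|t\|_2^2 + P(t)$ is convex by Definition~\ref{defn:weaklyconvex}. Hence the operator $S(x) = \arg\min_t \tfrac{1}{2}\|x - t\|_2^2 + q_c(t)$ is non-expansive.

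The second step is to identify $S$ explicitly in terms of $T_\alpha$. I would expand $\tfrac{1}{2}\|x-t\|_2^2 + \tfrac{c\rho}{2}\|t\|_2^2$, drop the term independent of $t$, and complete the square in $t$; this rewrites the objective, up to a constant, as $\tfrac{1+c\rho}{2}\bigl\|t - \tfrac{1}{1+c\rho}x\bigr\|_2^2 + c\,P(t)$. Dividing through by $1+c\rho$ shows that $S(x) = T_{\alpha}\bigl(\tfrac{1}{1+c\rho}x\bigr)$ with $\alpha = c/(1+c\rho)$. Non-expansivity of $S$ then reads $\|T_\alpha(\tfrac{1}{1+c\rho}x) - T_\alpha(\tfrac{1}{1+c\rho}z)\|_2 \leq \|x - z\|_2$.

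The final step is a bookkeeping change of variables. Solving $\alpha = c/(1+c\rho)$ for $c$ gives $c = \alpha/(1-\alpha\rho)$, which is where the hypothesis $\alpha\rho < 1$ becomes essential: it guarantees $c > 0$ and, equivalently, that $1/(1+c\rho) = 1-\alpha\rho > 0$. Substituting $u = (1-\alpha\rho)x$, $v = (1-\alpha\rho)z$ and using $\|x-z\|_2 = (1-\alpha\rho)^{-1}\|u-v\|_2$ turns the bound into $\|T_\alpha(u) - T_\alpha(v)\|_2 \leq (1-\alpha\rho)^{-1}\|u-v\|_2$, which is \eqref{eqn:propT}.

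I expect the only real subtlety to be the role of $\alpha\rho < 1$: the reduction manufactures an extra quadratic that simultaneously contracts the input by the factor $1-\alpha\rho$ and rescales the effective step-size, and the Lipschitz constant turns out to be exactly the reciprocal of this contraction. Keeping the change of variables straight, so that the inflation $(1-\alpha\rho)^{-1}$ lands on the correct side, is the one place to be careful, though it is routine. As an alternative that avoids the scaling trick altogether, one could write the optimality condition for $\hat{x} = T_\alpha(x)$ in terms of the convex surrogate $P_\rho = P + \tfrac{\rho}{2}\|\cdot\|_2^2$, namely $\tfrac{1}{\alpha}(x-\hat{x}) + \rho\hat{x} \in \partial P_\rho(\hat{x})$, and likewise for $\hat{z} = T_\alpha(z)$; monotonicity of $\partial P_\rho$ together with the Cauchy--Schwarz inequality then yields $(\tfrac{1}{\alpha}-\rho)\|\hat{x}-\hat{z}\|_2 \leq \tfrac{1}{\alpha}\|x-z\|_2$, producing the same constant $(1-\alpha\rho)^{-1}$.
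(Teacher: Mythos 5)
Your proof is correct and follows essentially the same route as the paper's: convexify $P$ via $\tfrac{\rho}{2}\|t\|_2^2$, recognize the resulting proximity operator as $T_{c(1+c\rho)^{-1}}\bigl(\tfrac{1}{1+c\rho}x\bigr)$, invoke Lemma~\ref{lem:nonexp}, and undo the scaling with $c=\alpha/(1-\alpha\rho)$. The alternative you sketch at the end (optimality conditions plus monotonicity of $\partial P_\rho$ and Cauchy--Schwarz) is also valid and arguably more direct, but the main argument matches the paper step for step.
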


\subsection{ISTA as Iterations of a Contraction Mapping}\label{sec:Strict}

In this section, we derive a convergence result that is relatively easy to obtain, under the additional restriction $\rho < \sigma_m$. We desire not to exclude the case $\rho = \sigma_m$, because for $\rho = 0$ (a convex penalty function), we would like to allow $\sigma_m = 0$, which corresponds to an operator $H$ with a non-trivial null-space. In Sec.~\ref{sec:avg}, we will also allow the case $\rho = \sigma_m$, leading to a generalization of Prop.~\ref{prop:FB}.

\begin{prop} Suppose that the eigenvalues of $H^T\,H$ are contained in the interval $[\sigma_m, \sigma_M]$, $P(x)$ is $\rho$-weakly convex and $T_{\alpha}(\cdot)$ is as given in \eqref{eqn:T}.
If
\begin{subequations}\label{eqn:hypo}
\begin{align}
\rho &< \sigma_m, \label{eqn:hypo1} \\
\alpha &< \frac{2}{\sigma_M + \rho},\label{eqn:hypo2}
\end{align}
\end{subequations}
then, the iterations in \eqref{eqn:MM} converge to the unique minimizer of $C(x)$.
\begin{proof}
For $U_{\alpha}$ in \eqref{eqn:U}, we have that 
\begin{equation*}
\| U_{\alpha}(x) - U_{\alpha}(z) \|_2 \leq \max\bigl(|1 - \alpha\,\sigma_M|, |1 - \alpha\,\sigma_m| \bigr)\,\|x - z\|_2.
\end{equation*}
Now observe that \eqref{eqn:hypo1} implies $\rho < (\sigma_M + \rho)/2$. This along with \eqref{eqn:hypo2}, gives $\alpha\,\rho <1$. 
Therefore, by Prop.~\ref{prop:T}, we can write
\begin{multline*}
\Bigl\|T_{\alpha}\,\bigl( U_{\alpha}(x)\bigr) - T_{\alpha}\,\bigl( U_{\alpha}(z)\bigr)  \Bigr\|_2 \\ \leq \frac{\max\bigl(|1 - \alpha\,\sigma_M|, |1 - \alpha\,\sigma_m| \bigr)}{1 - \alpha\,\rho}\,\|x - z\|_2,
\end{multline*}
When $\rho < \sigma_m$, $C(x)$ is strictly convex and the minimizer, which exists by assumption, is unique. By Prop.~\ref{prop:fixed}, this unique minimizer is in fact the unique fixed point of $T_{\alpha}\,U_{\alpha}$. Let us denote this minimizer as $z$. Notice that
\begin{multline*}
\|T_{\alpha}\bigl( U_{\alpha}\,(x)\bigr) - z \|_2 \\ \leq \frac{\max\bigl(|1 - \alpha\,\sigma_M|, |1 - \alpha\,\sigma_m| \bigr)}{1 - \alpha\,\rho}\,\|x - z\|_2.
\end{multline*}
Thus the iterations converge (geometrically) to $z$ if the two conditions below hold
\begin{subequations}\label{eqn:cond}
\begin{align}
\frac{|1 - \alpha\,\sigma_M|}{1 - \alpha\,\rho} &< 1, \label{eqn:cond1} \\
\frac{|1 - \alpha\,\sigma_m|}{1 - \alpha\,\rho} &< 1\label{eqn:cond2}.
\end{align}
\end{subequations}
\eqref{eqn:cond1} is equivalent to 
\begin{subequations}\label{eqn:cond1eq}
\begin{align}
\rho &< \sigma_M,\\
\alpha &< \frac{2}{\sigma_M + \rho}.
\end{align}
\end{subequations}
\eqref{eqn:cond2} is equivalent to 
\begin{subequations}\label{eqn:cond2eq}
\begin{align}
\rho &< \sigma_m,\\
\alpha &< \frac{2}{\sigma_m + \rho}.
\end{align}
\end{subequations}
Noting that $\sigma_M \geq \sigma_m$, and $2/(\sigma_M + \rho) \leq 2/(\sigma_m+\rho)$ we deduce that \eqref{eqn:hypo} implies \eqref{eqn:cond1eq}, and \eqref{eqn:cond2eq}, completing the proof.
\end{proof}
\end{prop}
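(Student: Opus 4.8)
The plan is to show that the composite operator $T_\alpha \circ U_\alpha$ driving the iteration \eqref{eqn:MM} is a contraction mapping. Once this is established, the Banach fixed-point theorem gives that the iterates converge geometrically to the unique fixed point of $T_\alpha\,U_\alpha$, and Prop.~\ref{prop:fixed} identifies that fixed point with a minimizer of $C(x)$. Moreover, since $\rho < \sigma_m$ forces the $\sigma_m$-strongly convex data term $\tfrac{1}{2}\|y-Hx\|_2^2$ to dominate the $\rho$-weak convexity of $P$, the cost $C$ is strictly convex, so its minimizer is unique and coincides with the contraction limit.

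First I would bound the Lipschitz constants of the two factors separately. The operator $U_\alpha$ in \eqref{eqn:U} is affine with linear part $I - \alpha\,H^T H$, whose eigenvalues lie in $[1-\alpha\sigma_M,\,1-\alpha\sigma_m]$; hence its Lipschitz constant is $\max(|1-\alpha\sigma_M|,\,|1-\alpha\sigma_m|)$. For the threshold operator I would invoke Prop.~\ref{prop:T}, which supplies the Lipschitz constant $1/(1-\alpha\rho)$ under the proviso $\alpha\rho<1$. To secure this proviso I note that \eqref{eqn:hypo1} gives $\rho<\sigma_M$ and hence $\rho<(\sigma_M+\rho)/2$, which together with \eqref{eqn:hypo2} yields $\alpha\rho<1$. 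Multiplying the two bounds, the composite $T_\alpha\,U_\alpha$ is Lipschitz with constant $\max(|1-\alpha\sigma_M|,\,|1-\alpha\sigma_m|)/(1-\alpha\rho)$.

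The substantive step is then to verify this composite constant is strictly below $1$, i.e. that both $|1-\alpha\sigma_M|<1-\alpha\rho$ and $|1-\alpha\sigma_m|<1-\alpha\rho$ hold. I would dispatch each by expanding the absolute value into its two-sided form: since $\alpha>0$, the inequality $-(1-\alpha\rho) < 1-\alpha\sigma < 1-\alpha\rho$ is equivalent to the pair $\rho<\sigma$ and $\alpha<2/(\sigma+\rho)$. Taking $\sigma=\sigma_M$ and $\sigma=\sigma_m$ reduces the two requirements to \eqref{eqn:cond1eq} and \eqref{eqn:cond2eq} respectively. Finally, because $\sigma_m\le\sigma_M$ implies $2/(\sigma_M+\rho)\le 2/(\sigma_m+\rho)$, the hypotheses \eqref{eqn:hypo1}--\eqref{eqn:hypo2} imply both sets of conditions, and the composite is a genuine contraction.

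I do not anticipate a deep conceptual obstacle, as the expansiveness of $T_\alpha$ is already quantified in Prop.~\ref{prop:T} and the fixed-point/minimizer correspondence in Prop.~\ref{prop:fixed}. The one delicate point is the two-sided expansion of the absolute-value inequalities, where I must track signs carefully and confirm that it is the $\sigma_M$ condition, rather than the $\sigma_m$ one, that binds the admissible step size; after that the argument is a short Banach fixed-point conclusion.
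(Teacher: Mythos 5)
Your proposal is correct and follows essentially the same route as the paper's proof: bounding the Lipschitz constant of the affine map $U_{\alpha}$ by $\max(|1-\alpha\sigma_M|,|1-\alpha\sigma_m|)$, invoking Prop.~\ref{prop:T} for the factor $1/(1-\alpha\rho)$ after verifying $\alpha\rho<1$, expanding the two absolute-value inequalities into exactly the conditions \eqref{eqn:cond1eq}--\eqref{eqn:cond2eq}, and concluding via the contraction/fixed-point argument together with Prop.~\ref{prop:fixed}. No gaps.
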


\begin{remark}
Observe that since $\rho < \sigma_m \leq \sigma_M$, we have
\begin{equation*}
\frac{2}{\sigma_M + \rho} > \frac{1}{\sigma_M}.
\end{equation*}
Thus the generalized forward backward algorithm converges for stepsizes greater than that allowed by majorization-minimization. We also note that the convergence proof given for Prop.~\ref{prop:MMIntro} does not straightforwardly extend to the case considered in this subsection.
\end{remark}

\subsection{ISTA as Iterations of an Averaged Operator}\label{sec:avg}

\begin{defn}\cite{Bauschke}
An operator $S:\mathbb{R}^n \to \mathbb{R}^n$ is said to be $\beta$-averaged with $\beta \in (0,1)$ if $S$ can be written as 
\begin{equation*}
S = (1-\beta)\,I + \beta\,U,
\end{equation*}
for a non-expansive $U$. \qed
\end{defn}

As a corollary of the definition, we have,
\begin{lem}\label{lem:avg}
$S:\mathbb{R}^n \to \mathbb{R}^n$ is $\beta$-averaged if and only if 
\begin{equation*}
\frac{1}{\beta}\Bigl(S - (1-\beta)\,I\Bigr)
\end{equation*}
is non-expansive. \qed
\end{lem}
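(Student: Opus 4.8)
The plan is to prove both implications by simply solving the defining equation of a $\beta$-averaged operator for the underlying non-expansive operator. The essential observation is that, once $\beta \in (0,1)$ is fixed, the decomposition $S = (1-\beta)\,I + \beta\,U$ determines $U$ uniquely as a function of $S$: rearranging gives $\beta\,U = S - (1-\beta)\,I$, and since $\beta \neq 0$ we may divide to obtain $U = \frac{1}{\beta}\bigl(S - (1-\beta)\,I\bigr)$. The lemma is then just the statement that this recovered $U$ is exactly the operator appearing in the claim, so the two conditions are literally the same condition read in two directions.

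For the forward direction, I would assume $S$ is $\beta$-averaged, so that by definition $S = (1-\beta)\,I + \beta\,U$ for some non-expansive $U$. Subtracting $(1-\beta)\,I$ and dividing by $\beta$ shows $U = \frac{1}{\beta}\bigl(S - (1-\beta)\,I\bigr)$, which is non-expansive by hypothesis; this is precisely the stated conclusion. For the converse, I would set $U := \frac{1}{\beta}\bigl(S - (1-\beta)\,I\bigr)$, assume $U$ is non-expansive, and note that multiplying through by $\beta$ and adding $(1-\beta)\,I$ yields $S = (1-\beta)\,I + \beta\,U$. This exhibits $S$ in the exact form required by the definition of a $\beta$-averaged operator, completing the argument.

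There is essentially no obstacle here; the only point requiring any care is ensuring the division by $\beta$ is legitimate, which is guaranteed since the definition restricts $\beta$ to the open interval $(0,1)$, so in particular $\beta \neq 0$. Everything else is a one-line algebraic manipulation, which is why the result is stated as a corollary of the definition rather than a substantive lemma.
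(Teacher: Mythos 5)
Your argument is correct and is exactly the intended one: the paper gives no written proof, presenting the lemma as an immediate corollary of the definition, and your rearrangement $U = \frac{1}{\beta}\bigl(S - (1-\beta)\,I\bigr)$ with $\beta \in (0,1)$ ensuring $\beta \neq 0$ is precisely that corollary spelled out. Nothing is missing.
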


Averaged operators are of interest because they behave more desirably concerning convergence. Further, as we will note, averaged-ness is preserved under composition, which is instrumental in proving the convergence of the forward backward splitting algorithm \cite{Bauschke,com05p168}. To demonstrate the difference between the behavior of an averaged operator and a non-expansive operator, let us consider a scenario as follows. Let $U$ be non-expansive with a unique fixed point $z$, and  $x$ an arbitrary point. Then, $\|U(x) - z \|_2 \leq \|x - z \|_2$, but $U(x)$ is not guaranteed to be closer to $z$ than $x$. In the worst case, both $x$ and $U(x)$ might be equidistant to $z$. This is illustrated in Fig.~\ref{fig:averaged}. Now let $S = (1- \beta) I + \beta\,U$  for $\beta \in (0,1)$. $z$ is also the unique fixed point of $S$. But now, since $S(x)$ lies somewhere on the open segment between $x$ and $U(x)$, we will have $\|S(x) - z\|_2 < \| x - z \|_2$ (see Fig.~\ref{fig:averaged}). This discussion is of course not a proof of convergence for iterated applications of $S$ but it provides some intuition. The following proposition, which is also known as the Krasnosels'ki\u{\i}-Mann theorem (see Thm.~5.14 in \cite{Bauschke}) provides a convergence result for averaged operators.

\begin{prop}\label{prop:AvgConv}
Suppose $S:\mathbb{R}^n \to \mathbb{R}^n$ is $\beta$-averaged and its set of fixed points is non-empty. Given $x^0$, let $x^{n+1} = S (x^n)$. Then the sequence $\{x^n\}_{n\in \Z}$ converges and the limit is a fixed point of $S$. \qed
\end{prop}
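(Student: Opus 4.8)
The plan is to follow the classical Krasnosel'ski\u{\i}-Mann argument. Write $S = (1-\beta)\,I + \beta\,U$ with $U$ non-expansive, and let $z$ be any point in the (non-empty) fixed-point set of $S$. First I would observe that $z$ is also a fixed point of $U$: from $(1-\beta)\,z + \beta\,U(z) = z$ one immediately gets $U(z) = z$. The heart of the proof is a single quantitative descent estimate. Writing $x^{n+1} - z = (1-\beta)\,(x^n - z) + \beta\,\bigl(U(x^n) - U(z)\bigr)$ and applying the convex-combination identity $\|(1-\beta)\,a + \beta\,b\|_2^2 = (1-\beta)\,\|a\|_2^2 + \beta\,\|b\|_2^2 - \beta(1-\beta)\,\|a - b\|_2^2$, together with non-expansivity $\|U(x^n) - U(z)\|_2 \leq \|x^n - z\|_2$, yields
\begin{equation*}
\|x^{n+1} - z\|_2^2 \leq \|x^n - z\|_2^2 - \frac{1-\beta}{\beta}\,\|x^{n+1} - x^n\|_2^2,
\end{equation*}
where I have used $x^{n+1} - x^n = \beta\,\bigl(U(x^n) - x^n\bigr)$ to recast the final term.

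From this inequality I would extract two facts. First, since $\beta \in (0,1)$, the sequence $\|x^n - z\|_2$ is non-increasing, hence bounded; in particular $\{x^n\}$ is a bounded sequence (this is Fej\'er monotonicity with respect to the fixed-point set). Second, summing the inequality over $n$ telescopes the leading terms, giving $\frac{1-\beta}{\beta}\sum_n \|x^{n+1} - x^n\|_2^2 \leq \|x^0 - z\|_2^2 < \infty$, so that $\|x^{n+1} - x^n\|_2 \to 0$, equivalently the residual $\|x^n - U(x^n)\|_2 \to 0$.

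Next, by boundedness and the Bolzano-Weierstrass theorem \cite{Rudin}, some subsequence $x^{n_k}$ converges to a point $x^*$. Since $U$ is non-expansive it is continuous, so passing to the limit in $x^{n_k} - U(x^{n_k}) \to 0$ forces $x^* = U(x^*)$; hence $x^*$ is a fixed point of $U$, and therefore of $S$. Finally, to promote subsequential convergence to convergence of the whole sequence, I would re-apply the descent estimate with the specific choice $z = x^*$, so that $\|x^n - x^*\|_2$ is non-increasing. Because it has a subsequence $\|x^{n_k} - x^*\|_2 \to 0$, a monotone sequence with a subsequence tending to zero must itself tend to zero, giving $x^n \to x^*$, a fixed point of $S$, as claimed.

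The main obstacle is really the derivation of the quantitative descent inequality in the first paragraph: once that single estimate is in hand, boundedness, the vanishing of the residual, and the promotion of a subsequential limit to a genuine limit all follow from routine real-analysis arguments. It is worth noting that the only place where finite-dimensionality is invoked is the Bolzano-Weierstrass step, which lets us sidestep the weak-convergence and demiclosedness machinery that would be needed in a general Hilbert space.
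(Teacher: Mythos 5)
Your proof is correct and complete: the convex-combination identity plus non-expansivity gives exactly the Fej\'er-type descent inequality you state, the telescoping and Bolzano--Weierstrass steps are sound, and re-using the estimate with $z = x^*$ legitimately upgrades subsequential convergence to convergence of the whole sequence. The paper itself does not prove this proposition --- it cites it as the Krasnosels'ki\u{\i}--Mann theorem (Thm.~5.14 in the Bauschke--Combettes reference) --- and your argument is precisely the standard finite-dimensional version of that theorem's proof, so there is nothing to reconcile between the two.
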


To prove the convergence of ISTA, we will show that $T_{\alpha} \bigl( U_{\alpha}(\cdot)\bigr)$ is an averaged operator. In order to show that this operator is averaged, we will show that  $T_{\alpha}$ and $U_{\alpha}$ can be regarded as averaged operators by proper scaling (see $S_{\alpha}$ and $V_{\alpha}$\, below) and invoke the following result. 

\begin{figure}
\centering
\includegraphics[scale=1]{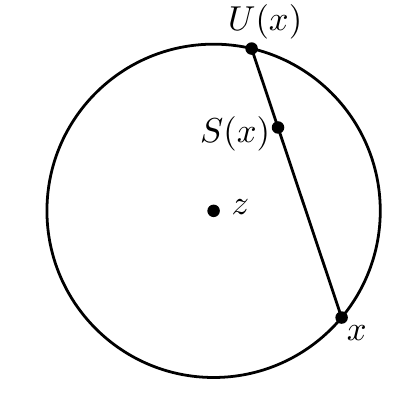}
\caption{ A non-expansive operator $U$ might not always take a point $x$ closer to its fixed point $z$. However, an averaged operator $S = (1- \beta) I + \beta\,U$  derived from $U$ will have such a property. \label{fig:averaged}}
\end{figure}

\begin{prop}\label{prop:comp}
If $S_1$ and $S_2$ are averaged, then $S_1\bigl( S_2(\cdot) \bigr)$  is also averaged. \qed
\end{prop}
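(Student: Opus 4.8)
The plan is to reduce averagedness to a single quadratic inequality and then simply add the inequalities coming from $S_1$ and $S_2$. The cornerstone is the following reformulation of the definition: writing $S=(1-\beta)\,I+\beta\,U$ with $U$ non-expansive, setting $a=x-y$ and $b=U(x)-U(y)$, and using the elementary identity
\begin{equation*}
\|(1-\beta)\,a+\beta\,b\|_2^2 = (1-\beta)\,\|a\|_2^2 + \beta\,\|b\|_2^2 - \beta\,(1-\beta)\,\|a-b\|_2^2,
\end{equation*}
together with the non-expansiveness $\|b\|_2\le\|a\|_2$, I obtain that an operator $S$ is $\beta$-averaged if and only if
\begin{equation}\label{eqn:avgchar}
\|S(x)-S(y)\|_2^2 \le \|x-y\|_2^2 - \frac{1-\beta}{\beta}\,\bigl\|(I-S)(x)-(I-S)(y)\bigr\|_2^2
\end{equation}
for all $x,y$. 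The link between the two formulations is the observation that $(I-S)(x)-(I-S)(y)=\beta\,(a-b)$. The forward direction of this characterization follows directly from the identity above; the converse is a short computation showing that \eqref{eqn:avgchar} forces $\frac{1}{\beta}\bigl(S-(1-\beta)\,I\bigr)$ to be non-expansive, which is averagedness by Lemma~\ref{lem:avg}.

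Next I would apply \eqref{eqn:avgchar} twice. Suppose $S_1$ is $\beta_1$-averaged and $S_2$ is $\beta_2$-averaged. Applying the characterization of $S_1$ at the points $S_2(x),S_2(y)$, then that of $S_2$ at $x,y$, and chaining the two bounds gives
\begin{multline*}
\|S_1(S_2(x))-S_1(S_2(y))\|_2^2 \le \|x-y\|_2^2 \\ - \frac{1-\beta_2}{\beta_2}\,\|u\|_2^2 - \frac{1-\beta_1}{\beta_1}\,\|v\|_2^2,
\end{multline*}
where $u=(I-S_2)(x)-(I-S_2)(y)$ and $v=(I-S_1)(S_2(x))-(I-S_1)(S_2(y))$.

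The decisive algebraic step is the operator identity $I-S_1\,S_2=(I-S_2)+(I-S_1)\,S_2$, which yields $(I-S_1S_2)(x)-(I-S_1S_2)(y)=u+v$. Setting $c=\min\bigl(\frac{1-\beta_1}{\beta_1},\frac{1-\beta_2}{\beta_2}\bigr)>0$ and using the crude bound $\|u+v\|_2^2\le 2\,\|u\|_2^2+2\,\|v\|_2^2$, the two negative terms dominate $\frac{c}{2}\,\|u+v\|_2^2$. Choosing $\beta=2/(2+c)\in(0,1)$ so that $(1-\beta)/\beta=c/2$, the combined inequality is precisely \eqref{eqn:avgchar} written for $S_1\,S_2$, and the converse direction of the characterization certifies that $S_1\,S_2$ is $\beta$-averaged.

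I expect the main obstacle to be packaging the characterization \eqref{eqn:avgchar} cleanly, since this is where the non-expansiveness of the underlying $U$ is actually consumed, and it is what turns averagedness into an \emph{additive} quantity amenable to composition. Once \eqref{eqn:avgchar} is established in both directions, the remainder is bookkeeping: the identity $I-S_1S_2=(I-S_2)+(I-S_1)\,S_2$ and a single Young-type inequality suffice, and no optimality of the resulting constant $\beta$ is required for the statement.
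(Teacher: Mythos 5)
Your proof is correct. The paper itself omits the argument, stating only that it ``follows from the definition'' and deferring to Prop.~4.32 of Bauschke--Combettes; what you have written is essentially that reference's proof made self-contained. The key step---recasting $\beta$-averagedness as the quadratic inequality $\|S(x)-S(y)\|_2^2 \le \|x-y\|_2^2 - \tfrac{1-\beta}{\beta}\,\|(I-S)(x)-(I-S)(y)\|_2^2$---is the standard characterization, and both directions check out: the forward direction via the convex-combination identity plus non-expansiveness of $U$, the converse by expanding $\|\tfrac{1}{\beta}(S-(1-\beta)I)(x)-\tfrac{1}{\beta}(S-(1-\beta)I)(y)\|_2^2$ and invoking Lemma~\ref{lem:avg}. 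The chaining of the two inequalities, the identity $I-S_1S_2=(I-S_2)+(I-S_1)S_2$, and the bound $\|u+v\|_2^2\le 2\|u\|_2^2+2\|v\|_2^2$ are all valid, and $c=\min\bigl(\tfrac{1-\beta_1}{\beta_1},\tfrac{1-\beta_2}{\beta_2}\bigr)>0$ since $\beta_1,\beta_2\in(0,1)$. The only difference from the textbook treatment is that your Young-type bound yields a non-sharp averagedness constant $\beta=2/(2+c)$ rather than the optimal $(\beta_1+\beta_2-2\beta_1\beta_2)/(1-\beta_1\beta_2)$; since the proposition only asserts that the composition is averaged for \emph{some} $\beta\in(0,1)$, this costs nothing here, and nothing downstream in the paper (the proofs of Prop.~\ref{prop:FB2} and Prop.~\ref{prop:FBgeneralNC}) depends on the value of the constant.
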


The proof of this proposition follows from the definition of an averaged operator 
and is omitted (for an alternative statement and proof of this proposition, see Prop.~4.32 in \cite{Bauschke}).

We will also need the following result (see Cor.23.8 in \cite{Bauschke}).
\begin{lem}\label{lem:1/2}
Suppose $q:\mathbb{R}^n \to \mathbb{R}$ is convex and $S(x)$ is defined as 
\begin{equation*}
S(x) = \arg \min_t \,\frac{1}{2}\,\|x - t\|_2^2 + q(t).
\end{equation*}
Then, $S$ is 1/2-averaged. \qed
\end{lem}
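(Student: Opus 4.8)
The plan is to prove the lemma by reducing it, via Lemma~\ref{lem:avg} with $\beta = 1/2$, to the single statement that the operator $2S - I$ is non-expansive. Applying Lemma~\ref{lem:avg} with $\beta = 1/2$ shows that $S$ is $1/2$-averaged if and only if $2\bigl(S - \tfrac{1}{2}I\bigr) = 2S - I$ is non-expansive, so the whole task amounts to verifying this non-expansiveness.

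First I would characterize $S(x)$ through its optimality conditions. Writing $u = S(x)$ and $v = S(y)$, the definition of $S$ together with the convexity of $q$ yields, by the optimality conditions for the (strictly convex) minimization problems, the subgradient inclusions $x - u \in \partial q(u)$ and $y - v \in \partial q(v)$. This is the same device used in the proof of Lemma~\ref{lem:aux} and alluded to in the remark following Lemma~\ref{lem:nonexp}. The inclusions are meaningful because $\tfrac{1}{2}\|x - t\|_2^2 + q(t)$ is strictly convex in $t$, so $S$ is single-valued. Next I would invoke the monotonicity of the subdifferential of the convex function $q$: from the two inclusions one has $\langle (x - u) - (y - v),\, u - v\rangle \geq 0$, which rearranges to the firm non-expansiveness inequality $\langle x - y,\, u - v\rangle \geq \|u - v\|_2^2$. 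This inequality is the crux of the argument.

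Finally, setting $a = u - v$ and $b = x - y$, I would expand
\[
\bigl\|(2S - I)(x) - (2S - I)(y)\bigr\|_2^2 = \|2a - b\|_2^2 = 4\,\|a\|_2^2 - 4\,\langle a, b\rangle + \|b\|_2^2,
\]
and use $\langle a, b\rangle \geq \|a\|_2^2$ to bound the right-hand side above by $\|b\|_2^2 = \|x - y\|_2^2$. This shows that $2S - I$ is non-expansive, and hence, by Lemma~\ref{lem:avg}, that $S$ is $1/2$-averaged.

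The main obstacle is really only the firm non-expansiveness step, i.e.\ passing from the subgradient inclusions to $\langle x - y,\, u - v\rangle \geq \|u - v\|_2^2$ via monotonicity; once that inequality is in hand, the remainder is the short algebraic expansion above. Everything else is bookkeeping, so I expect no genuine difficulty beyond recording the optimality conditions carefully.
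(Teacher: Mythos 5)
Your proof is correct. The paper itself does not prove this lemma---it simply cites Cor.~23.8 of the Bauschke--Combettes book---so there is no in-paper argument to compare against; your derivation (optimality conditions, monotonicity of $\partial q$ giving firm non-expansiveness $\langle x-y,\,S(x)-S(y)\rangle \geq \|S(x)-S(y)\|_2^2$, then the algebraic expansion showing $2S-I$ is non-expansive and invoking Lemma~\ref{lem:avg} with $\beta=1/2$) is the standard proof of the cited result, and it is precisely the technique the paper hints at after Lemma~\ref{lem:nonexp} (``optimality conditions and monotonicity of the subgradient''), sharpened from plain to firm non-expansiveness. No gaps.
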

Note that this is a stronger result than Lemma~\ref{lem:nonexp}. In fact, Lemma~\ref{lem:nonexp} follows as a corollary of Lemma~\ref{lem:1/2}.

Using Lemma~\ref{lem:1/2}, we can show that the composition of the threshold operator $T_{\alpha}$ with a proper scaling is averaged.
\begin{prop}\label{prop:fn}
For $\alpha\,\rho < 1$, let $S_{\alpha}$ be defined as
\begin{equation}\label{eqn:scaledT}
S_{\alpha}(x) = T_{\alpha}\Bigl( (1-\alpha\,\rho) x \Bigr).
\end{equation} 
Then, $S_{\alpha}$ is $1/2$-averaged.
\begin{proof}
In the proof of Prop.~\ref{prop:T},  for $\alpha\,\rho < 1$, we show that $S_{\alpha}$ can actually be expressed as
\begin{equation*}
S_{\alpha}(x) = \arg \min_t \frac{1}{2} \|x - t\|_2^2 + \frac{\alpha}{1-\alpha\,\rho}\,\Bigl( \rho \|t\|_2^2 + P(t) \Bigr).
\end{equation*}
Since $\rho \|t\|_2^2 + P(t)$ is convex, this means that $S_{\alpha}$ is  1/2-averaged by Lemma~\ref{lem:1/2}.
\end{proof}
\end{prop}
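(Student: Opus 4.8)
The plan is to reduce the averaged-ness of $S_\alpha$ to Lemma~\ref{lem:1/2}, which characterizes operators of the form $\arg\min_t \tfrac{1}{2}\|x-t\|_2^2 + q(t)$ for convex $q$ as being $1/2$-averaged. So the entire task is to rewrite $S_\alpha(x) = T_\alpha\bigl((1-\alpha\rho)x\bigr)$ in exactly this proximal form with a convex $q$, and then read off the conclusion. Conveniently, this algebraic rewriting has already been carried out inside the proof of Prop.~\ref{prop:T}: there it is shown that $S(x) = \arg\min_t \tfrac{1}{2}\|x-t\|_2^2 + c\bigl(\tfrac{\rho}{2}\|t\|_2^2 + P(t)\bigr)$ equals $T_{c(1+c\rho)^{-1}}\bigl((1+c\rho)^{-1}x\bigr)$ for any $c>0$.

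First I would recall the change of variables from the proof of Prop.~\ref{prop:T}: setting $\alpha = c(1+c\rho)^{-1}$, which is legitimate precisely because $\alpha\rho < 1$ guarantees a unique positive solution $c = \alpha(1-\alpha\rho)^{-1}$. Substituting this $c$ back into the minimization expression and simplifying the coefficient $c = \alpha/(1-\alpha\rho)$ in front of the convex term $\tfrac{\rho}{2}\|t\|_2^2 + P(t)$ yields exactly the displayed identity
\begin{equation*}
S_{\alpha}(x) = \arg \min_t \frac{1}{2} \|x - t\|_2^2 + \frac{\alpha}{1-\alpha\,\rho}\,\Bigl( \rho \|t\|_2^2 + P(t) \Bigr).
\end{equation*}
Here I must also confirm that the scaling $(1-\alpha\rho)^{-1}x$ appearing in Prop.~\ref{prop:T} matches the factor $(1-\alpha\rho)x$ in the definition \eqref{eqn:scaledT}; this is just tracking the argument through the substitution $\alpha = c(1+c\rho)^{-1}$, since $(1+c\rho)^{-1} = 1-\alpha\rho$.

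Second, I would invoke the weak-convexity hypothesis: since $P$ is $\rho$-weakly convex, Definition~\ref{defn:weaklyconvex} gives that $\tfrac{\rho}{2}\|t\|_2^2 + P(t)$ is convex, hence so is any positive multiple of it, and in particular $q(t) = \tfrac{\alpha}{1-\alpha\rho}\bigl(\rho\|t\|_2^2 + P(t)\bigr)$ is convex (the coefficient is positive because $\alpha\rho<1$ forces $1-\alpha\rho>0$, and $\alpha>0$). With $S_\alpha$ now written as $\arg\min_t \tfrac{1}{2}\|x-t\|_2^2 + q(t)$ for this convex $q$, Lemma~\ref{lem:1/2} applies verbatim and delivers that $S_\alpha$ is $1/2$-averaged, which is the claim.

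The main obstacle is not conceptual but bookkeeping: one must be careful that the scaling constant inside $T$ and the scaling of the argument are handled consistently, because the statement packages a scaled argument $(1-\alpha\rho)x$ together with the threshold $T_\alpha$, whereas Prop.~\ref{prop:T} produces $T_{c(1+c\rho)^{-1}}$ with argument $(1+c\rho)^{-1}x$. Verifying that these two descriptions coincide under $\alpha = c(1+c\rho)^{-1}$ is the one place where a sign or reciprocal slip could occur. Everything else—the convexity of the penalty and the invocation of Lemma~\ref{lem:1/2}—is immediate given the earlier results, so I expect the proof to be short once the algebraic identity is reused.
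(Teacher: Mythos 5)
Your proposal is correct and follows essentially the same route as the paper: reuse the identity established in the proof of Prop.~\ref{prop:T} (via the substitution $c = \alpha(1-\alpha\rho)^{-1}$, so that $(1+c\rho)^{-1} = 1-\alpha\rho$) to write $S_\alpha$ as a proximity operator of a convex function, then apply Lemma~\ref{lem:1/2}. The extra care you take in tracking the argument scaling and the positivity of the coefficient is exactly the bookkeeping the paper leaves implicit.
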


For affine operators that employ a symmetric matrix, such as those used in ISTA with a quadratic data term, averaged-ness is associated with the eigenvalues of the matrix.
\begin{prop}\label{prop:matrix}
Suppose $S:\mathbb{R}^n\rightarrow \mathbb{R}^n$ is an affine mapping of the form
\begin{equation*}
S(x) = M\,x + u,
\end{equation*}
where $M$ is a symmetric matrix, i.e. $M = M^T$, and $u$ is a constant vector. Then, 
$S$ is $\beta$-averaged for some $\beta \in (0,1)$ if and only if the eigenvalues of $M$ lie in the interval $(-1,1]$.
\begin{proof}
Suppose the eigenvalues of $M$ lie in the interval $[\sigma_0, \sigma_1]$.
Consider 
\begin{equation*}
V = \frac{1}{\beta}\Bigl(S - (1-\beta)\,I\Bigr).
\end{equation*}
Then,
\begin{equation*}
V(x) - V(z) = \underbrace{ \frac{1}{\beta}\, \Bigl(M - (1-\beta)I \Bigr) }_Q (x-z).
\end{equation*}
The eigenvalues of $Q$ lie in the interval,
\begin{equation*}
\left[ \frac{\sigma_0 - 1 + \beta }{\beta}, \frac{\sigma_1 - 1 + \beta }{\beta}\right].
\end{equation*}
Therefore, by Lemma~\ref{lem:avg}, $S$ is $\beta$-averaged for some $\beta \in (0,1)$ iff $Q$ is non-expansive. But $Q$ is non-expansive for some ${\beta \in (0,1)}$ iff 
\begin{multline}\label{eqn:eqcond}
-1 \leq \frac{\sigma_0 - 1 + \beta }{\beta} \leq \frac{\sigma_1 - 1 + \beta }{\beta} \leq 1, \\ \text{ for some }\beta \in (0,1).
\end{multline}
 Note that since $\beta$ is restricted to be positive, \eqref{eqn:eqcond} can be written equivalently as,
\begin{equation}\label{eqn:eqcond2}
1-2\beta \leq \sigma_0  \leq \sigma_1  \leq 1, \text{ for some }\beta \in (0,1).
\end{equation}
But \eqref{eqn:eqcond2} holds iff $-1< \sigma_0 \leq \sigma_1 \leq 1$.
\end{proof}
\end{prop}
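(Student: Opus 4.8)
The plan is to reduce $\beta$-averagedness to a spectral condition by passing through Lemma~\ref{lem:avg}. By that lemma, $S$ is $\beta$-averaged precisely when the operator $V = \frac{1}{\beta}\bigl(S - (1-\beta)I\bigr)$ is non-expansive. Since $S$ is affine with linear part $M$, so is $V$, with linear part $Q = \frac{1}{\beta}\bigl(M - (1-\beta)I\bigr)$; the constant vector $u$ cancels in the difference $V(x) - V(z) = Q(x-z)$. Hence $V$ is non-expansive if and only if $\|Q(x-z)\|_2 \leq \|x-z\|_2$ for all $x,z$, i.e. the operator norm of $Q$ is at most one.

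Next I would exploit symmetry. Because $M = M^T$, the matrix $Q$ is also symmetric, so its operator norm equals the largest eigenvalue in absolute value; thus $Q$ is non-expansive exactly when every eigenvalue of $Q$ lies in $[-1,1]$. Writing the spectrum of $M$ as $[\sigma_0,\sigma_1]$ (its smallest and largest eigenvalues), the increasing spectral map $\lambda \mapsto (\lambda - 1 + \beta)/\beta$ sends these to the extreme eigenvalues $(\sigma_0 - 1 + \beta)/\beta$ and $(\sigma_1 - 1 + \beta)/\beta$ of $Q$. Non-expansiveness of $Q$ for a fixed $\beta$ is therefore the pair of inequalities $-1 \leq (\sigma_0 - 1 + \beta)/\beta$ and $(\sigma_1 - 1 + \beta)/\beta \leq 1$, which — since $\beta > 0$ lets one clear denominators without reversing inequalities — simplify to $1 - 2\beta \leq \sigma_0$ and $\sigma_1 \leq 1$.

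The final step, and the one demanding the most care, is handling the existential quantifier over $\beta \in (0,1)$, since $S$ is declared averaged when such a $\beta$ exists. The condition $\sigma_1 \leq 1$ is independent of $\beta$ and so must hold outright. The condition $1 - 2\beta \leq \sigma_0$ is where the asymmetry of the target interval $(-1,1]$ originates: as $\beta$ ranges over $(0,1)$, the quantity $1 - 2\beta$ sweeps the open interval $(-1,1)$, approaching but never attaining $-1$. Consequently a feasible $\beta$ exists if and only if $\sigma_0 > -1$, the strictness on the left being forced by $\beta < 1$. Combining the two, a suitable $\beta$ exists exactly when $-1 < \sigma_0 \leq \sigma_1 \leq 1$, i.e. the eigenvalues of $M$ lie in $(-1,1]$, which is the claim. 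I expect the main subtlety to be precisely this quantifier argument — making explicit that the half-open interval is an artifact of the open range of $\beta$ — rather than the routine spectral computation that precedes it.
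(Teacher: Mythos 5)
Your proposal is correct and follows essentially the same route as the paper's proof: the same reduction via Lemma~\ref{lem:avg} to non-expansiveness of $Q = \frac{1}{\beta}\bigl(M - (1-\beta)I\bigr)$, the same spectral computation, and the same elimination of the existential quantifier over $\beta$, with $1-2\beta$ sweeping $(-1,1)$ to produce the half-open target interval. The only difference is that you make explicit the step the paper leaves implicit — that symmetry of $Q$ lets you identify non-expansiveness with the spectrum of $Q$ lying in $[-1,1]$ — which is a welcome bit of added rigor rather than a different argument.
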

In the convergence proof, to counter the scale factor that appears in \eqref{eqn:scaledT}, we use the following result.
\begin{prop}\label{prop:avg}
For $\alpha\,\rho < 1$, let $V_{\alpha}$ be defined as
\begin{equation*}
V_{\alpha} = \frac{1}{1-\alpha\,\rho}U_{\alpha}.
\end{equation*} 
Then, $V_{\alpha}$ is averaged if the two conditions below are satisfied
\begin{subequations}
\begin{align}
\rho &\leq \sigma_m, \label{eqn:h1}\\
\alpha &< \frac{2}{\sigma_M + \rho}.\label{eqn:h2}
\end{align}
\end{subequations}
\begin{proof}
Observe that $V_{\alpha}$ is of the form
\begin{equation*}
V_{\alpha}(x) = \underbrace{\frac{1}{1-\alpha\,\rho} \, \bigl( I  - \alpha\, H^T\,H)}_M\,x + u,
\end{equation*}
for a constant vector $u$. The eigenvalues of $M$ are contained in the interval
\begin{equation*}
\left[ \frac{1 - \alpha\,\sigma_M}{1-\alpha\,\rho}, \frac{1 - \alpha\,\sigma_m }{1-\alpha\,\rho}\right].
\end{equation*}
By \eqref{eqn:h1}, we have
\begin{equation*}
\frac{1 - \alpha\,\sigma_m}{1-\alpha\,\rho} \leq 1.
\end{equation*}
By \eqref{eqn:h2}, we have
\begin{equation*}
\frac{1 - \alpha\,\sigma_M}{1-\alpha\,\rho} > -1.
\end{equation*}
Thus, it follows by Prop.~\ref{prop:matrix} that $V_{\alpha}$ is averaged.
\end{proof}
\end{prop}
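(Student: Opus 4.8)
The plan is to recognize that $V_{\alpha}$ is an affine mapping whose linear part is a symmetric matrix, and then to invoke Prop.~\ref{prop:matrix}, which characterizes averagedness of such mappings entirely through the location of the eigenvalues of the linear part. Writing out the definition of $U_{\alpha}$ from \eqref{eqn:U}, I would first observe that
\begin{equation*}
V_{\alpha}(x) = \frac{1}{1-\alpha\,\rho}\bigl(I - \alpha\,H^T H\bigr)\,x + \frac{\alpha}{1-\alpha\,\rho}\,H^T y,
\end{equation*}
so that $V_{\alpha}(x) = M\,x + u$ with $M = \frac{1}{1-\alpha\,\rho}\bigl(I - \alpha\,H^T H\bigr)$ symmetric and $u$ a constant vector. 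The hypothesis $\alpha\,\rho < 1$ ensures the scaling factor $1/(1-\alpha\,\rho)$ is positive and finite, so $M$ is well-defined and symmetric, putting us squarely in the setting of Prop.~\ref{prop:matrix}.

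By that proposition, $V_{\alpha}$ is $\beta$-averaged for some $\beta \in (0,1)$ if and only if the eigenvalues of $M$ lie in $(-1,1]$. The next step is thus to locate these eigenvalues. Since the eigenvalues of $H^T H$ are contained in $[\sigma_m, \sigma_M]$ and $\alpha > 0$, the eigenvalues of $I - \alpha\,H^T H$ are contained in $[1-\alpha\,\sigma_M,\, 1-\alpha\,\sigma_m]$, and after dividing by the positive quantity $1-\alpha\,\rho$ the eigenvalues of $M$ fall in
\begin{equation*}
\left[\frac{1-\alpha\,\sigma_M}{1-\alpha\,\rho},\; \frac{1-\alpha\,\sigma_m}{1-\alpha\,\rho}\right].
\end{equation*}

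It then remains to check that the two hypotheses \eqref{eqn:h1} and \eqref{eqn:h2} are exactly what is needed to place this interval inside $(-1,1]$, and I expect each hypothesis to control one endpoint. For the right endpoint, using $1-\alpha\,\rho > 0$ I would clear the denominator to see that $\frac{1-\alpha\,\sigma_m}{1-\alpha\,\rho} \leq 1$ reduces to $\rho \leq \sigma_m$, which is \eqref{eqn:h1}. For the left endpoint, clearing the denominator again shows $\frac{1-\alpha\,\sigma_M}{1-\alpha\,\rho} > -1$ reduces to $2 - \alpha(\sigma_M + \rho) > 0$, i.e.\ $\alpha < 2/(\sigma_M + \rho)$, which is \eqref{eqn:h2}. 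With both endpoints confirmed, Prop.~\ref{prop:matrix} delivers the averagedness of $V_{\alpha}$. The argument is essentially routine eigenvalue bookkeeping once the affine, symmetric structure is identified; the only subtlety to watch is the sign of $1-\alpha\,\rho$, which must remain positive (guaranteed by $\alpha\,\rho < 1$) both for the scaling to make sense and so that clearing denominators preserves the direction of the two inequalities.
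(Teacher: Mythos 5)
Your proposal is correct and follows exactly the same route as the paper's proof: identify $V_{\alpha}$ as an affine map with symmetric linear part $M = \frac{1}{1-\alpha\,\rho}(I - \alpha\,H^T H)$, bound its eigenvalues in $\bigl[\frac{1-\alpha\,\sigma_M}{1-\alpha\,\rho}, \frac{1-\alpha\,\sigma_m}{1-\alpha\,\rho}\bigr]$, and verify via \eqref{eqn:h1} and \eqref{eqn:h2} that this interval lies in $(-1,1]$ so that Prop.~\ref{prop:matrix} applies. Your explicit remark about the sign of $1-\alpha\,\rho$ when clearing denominators is a small but welcome addition of care over the paper's more terse presentation.
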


We are now ready for the proof of Prop.~\ref{prop:FB2}.
\begin{proof}[Proof of Prop.~\ref{prop:FB2}]
Note that $T_{\alpha}\,U_{\alpha} = S_{\alpha}\,V_{\alpha}$, where $S_{\alpha}$ and $V_{\alpha}$ are as defined in Prop.~\ref{prop:fn} and Prop.~\ref{prop:avg}. By Prop.~\ref{prop:fn} and Prop.~\ref{prop:avg}, $S_{\alpha}$ and $V_{\alpha}$ are averaged. Then, by Prop.~\ref{prop:comp} we conclude that $T_{\alpha}\,U_{\alpha}$ is also averaged. We also have by Prop.~\ref{prop:fixed} that the fixed points of $T_{\alpha}\,U_{\alpha}$ coincide with the set of minimizers of $C(x)$, which is non-empty by assumption. The claim now follows by Prop.~\ref{prop:AvgConv}.
\end{proof}

\section{General Data Fidelity Term}\label{sec:GeneralDataFid}
In this section, we provide proofs  of Prop.~\ref{prop:FBgeneralNC} and Prop.~\ref{prop:FBgeneralDescent}. 
Let us recall the setup. We consider a cost function of the form
\begin{equation*}
D(x) = f(x) + P(x),
\end{equation*}
where $P$ is $\rho$-weakly convex and $f$ is $\rho$-strongly convex, differentiable with
\begin{equation}\label{eqn:Lipschitz}
\| \nabla f(x)-  \nabla f(y)\| \leq \tau \|x -y \|, \text{ for all }x,y.
\end{equation}
\begin{defn}
A function $f:\mathbb{R}^n \to \mathbb{R}$ is said to have a $\tau$-Lipschitz continuous gradient, if \eqref{eqn:Lipschitz} holds. \qed
\end{defn}

Let us now recall the Baillon-Haddad theorem \cite{Bauschke,bau10p781,ByrneBaillon}.
\begin{lem}
Suppose $f$ is convex, differentiable and 
it has a $\tau$-Lipschitz continuous gradient. Then,
\begin{equation*}
\langle \nabla f(x) - \nabla f(z), x - z \rangle \geq \frac{1}{\tau}\,  \| \nabla f(x) - \nabla f(z)\|_2^2.
\end{equation*}
\end{lem}

In the setting above, we have,
\begin{lem}\label{lem:g}
Suppose $f$ is $\rho$-strongly convex, differentiable and its gradient is $\sigma$-Lipschitz continuous with $\sigma > \rho$. Also, let $g(x) = f(x) - \rho \, \| x\|_2^2/2$. Then, $\nabla g$ is $(\sigma - \rho)$-Lipschitz continuous.
\begin{proof}
Let $F = \nabla f$ and $G = \nabla g$. Note that $G = F - \rho I$.
First observe that, by the Baillon-Haddad theorem applied to $F$, we have,
\begin{equation*}
\langle F(x) - F(y) , x-y \rangle \geq \frac{1}{\sigma} \| F(x)-  F(y) \|_2^2.
\end{equation*}
Now,
\begin{align*}
\| &G(x)-  G(y) \|_2^2 = \| F(x) - F(y) \|_2^2 + \rho^2 \|x - y \|_2^2 \\ & \qquad \qquad \qquad \qquad - 2\rho \langle F(x) - F(y) , x-y \rangle  \\
&\leq \left( 1 - 2\frac{\rho}{\sigma} \right) \|F(x) - F(y)\|_2^2 + \rho^2 \|x - y \|_2^2 \\
&\leq \left( 1 - 2\frac{\rho}{\sigma} \right) \sigma^2\,\|x - y\|_2^2 + \rho^2 \|x - y \|_2^2\\
&= (\sigma - \rho)^2\,\|x - y \|_2^2.
\end{align*}
Taking square roots, we obtain 
\begin{equation*}
\|G(x) - G(y)\|_2 \leq (\sigma-\rho)\|x-  y\|_2,
\end{equation*}
 which is the claim.
\end{proof}
\end{lem}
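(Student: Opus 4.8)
The plan is to reduce the claim to a direct estimate on the difference $\nabla g(x)-\nabla g(y)$ and to control the resulting terms with the Baillon--Haddad theorem stated just above. Since $f$ is $\rho$-strongly convex, the function $g(x)=f(x)-\tfrac{\rho}{2}\|x\|_2^2$ is convex and differentiable with $\nabla g(x)=\nabla f(x)-\rho\,x$, and the goal is to show $\|\nabla g(x)-\nabla g(y)\|_2 \leq (\sigma-\rho)\|x-y\|_2$. Writing $F=\nabla f$, I would first expand the squared norm,
\begin{equation*}
\|\nabla g(x)-\nabla g(y)\|_2^2 = \|F(x)-F(y)\|_2^2 - 2\rho\,\langle F(x)-F(y),\,x-y\rangle + \rho^2\|x-y\|_2^2,
\end{equation*}
so that the estimate splits into the three terms on the right.

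Next I would apply the Baillon--Haddad theorem to $f$, which is convex (being $\rho$-strongly convex) and has a $\sigma$-Lipschitz gradient, to obtain $\langle F(x)-F(y),\,x-y\rangle \geq \tfrac{1}{\sigma}\|F(x)-F(y)\|_2^2$. Since $\rho\geq 0$, this lower bound on the cross term yields
\begin{equation*}
\|\nabla g(x)-\nabla g(y)\|_2^2 \leq \Bigl(1-\tfrac{2\rho}{\sigma}\Bigr)\|F(x)-F(y)\|_2^2 + \rho^2\|x-y\|_2^2.
\end{equation*}
The natural way to finish is to insert the Lipschitz estimate $\|F(x)-F(y)\|_2^2 \leq \sigma^2\|x-y\|_2^2$ into the first term, since $(1-\tfrac{2\rho}{\sigma})\sigma^2+\rho^2=(\sigma-\rho)^2$ reproduces exactly the claimed constant.

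The step I expect to be the main obstacle is this last substitution, because it hinges on the sign of the coefficient $1-2\rho/\sigma$. It is valid only when the coefficient is nonnegative, that is when $\sigma\geq 2\rho$; in the remaining regime $\rho<\sigma<2\rho$ the coefficient is negative, and multiplying the Lipschitz bound by it reverses the inequality, so the estimate breaks down. To cover all $\sigma>\rho$ I would instead invoke the sharper co-coercivity inequality for a $\rho$-strongly convex, $\sigma$-smooth function,
\begin{equation*}
\langle F(x)-F(y),\,x-y\rangle \geq \frac{1}{\rho+\sigma}\|F(x)-F(y)\|_2^2 + \frac{\rho\,\sigma}{\rho+\sigma}\|x-y\|_2^2,
\end{equation*}
which after substitution bounds $\|\nabla g(x)-\nabla g(y)\|_2^2$ by $\tfrac{\sigma-\rho}{\sigma+\rho}\bigl(\|F(x)-F(y)\|_2^2-\rho^2\|x-y\|_2^2\bigr)$; applying $\|F(x)-F(y)\|_2^2\leq\sigma^2\|x-y\|_2^2$ then gives $(\sigma-\rho)^2\|x-y\|_2^2$ uniformly. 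An alternative route that avoids any case split is to observe that, $f$ being convex with a $\sigma$-Lipschitz gradient, the function $\tfrac{\sigma}{2}\|\cdot\|_2^2-f$ is convex; rewriting it as $\tfrac{\sigma-\rho}{2}\|\cdot\|_2^2-g$ shows that $g$ is convex and $\tfrac{\sigma-\rho}{2}\|\cdot\|_2^2-g$ is convex, which is precisely the standard characterization of a $(\sigma-\rho)$-Lipschitz gradient.
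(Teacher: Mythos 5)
Your expansion of $\|G(x)-G(y)\|_2^2$ and your appeal to the Baillon--Haddad theorem reproduce the paper's argument step for step, and the obstacle you flag at the final substitution is not hypothetical: it is present in the paper's own proof. The published argument inserts $\|F(x)-F(y)\|_2^2 \leq \sigma^2\|x-y\|_2^2$ into the term carrying the coefficient $1-2\rho/\sigma$ without checking the sign of that coefficient, so as written it is only valid when $\sigma \geq 2\rho$; for $\rho < \sigma < 2\rho$ the coefficient is negative and the inequality is applied in the wrong direction. The lemma itself remains true on the whole range $\sigma>\rho$, and either of your repairs closes the gap. The refined co-coercivity inequality for a $\rho$-strongly convex, $\sigma$-smooth $f$ turns the bound into $\tfrac{\sigma-\rho}{\sigma+\rho}\bigl(\|F(x)-F(y)\|_2^2-\rho^2\|x-y\|_2^2\bigr)$, whose leading coefficient is positive for every $\sigma>\rho$, so the Lipschitz estimate can then be inserted safely and yields $(\sigma-\rho)^2\|x-y\|_2^2$. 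Your second route is cleaner still: $\tfrac{\sigma}{2}\|\cdot\|_2^2-f=\tfrac{\sigma-\rho}{2}\|\cdot\|_2^2-g$ is convex, $g$ is convex, and the standard equivalence (for convex differentiable functions) between $L$-Lipschitz continuity of the gradient and convexity of $\tfrac{L}{2}\|\cdot\|_2^2$ minus the function gives the claim directly; this is also the argument that underlies the refined co-coercivity inequality, so it avoids any appearance of circularity. In short, your proof is correct and, unlike the paper's, covers the full stated range of $\sigma$.
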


\begin{prop}\label{prop:gradient}
Suppose $f$ is $\rho$-strongly convex, differentiable and its gradient is $\sigma$-Lipschitz continuous with $\sigma > \rho$. Also, for $\alpha\,\rho < 1$, let $V_{\alpha}$ be defined as
\begin{equation*}
V_{\alpha} = \frac{1}{1-\alpha\,\rho}\,\Bigl( I - \alpha \nabla f  \Bigr).
\end{equation*} 
Then, $V_{\alpha}$ is averaged if 
\begin{equation}\label{eqn:condF}
\alpha < \frac{2}{\sigma + \rho}.
\end{equation}
\begin{proof}
Let $g(x) = f(x) - \rho \, \| x\|_2^2/2$. Then, $g$ is convex, $\nabla g = \nabla f - \rho I$ and $\nabla g$ is $(\sigma - \rho)$-Lipschitz. By the Baillon-Haddad theorem, we have
\begin{equation}\label{ineq:g}
\langle \nabla g(x) - \nabla g(y), x-y \rangle \geq \frac{1}{\sigma - \rho}\, \|\nabla g(x) - \nabla g(y) \|_2^2.
\end{equation}
Also,
\begin{align*}
V_{\alpha} &= \frac{1}{1-\alpha\,\rho}\,\Bigl( I - \alpha \bigl(\rho I + \nabla g \bigr)  \Bigr) \\
&= I - \frac{\alpha}{1-\alpha\,\rho} \nabla g.
\end{align*}
Assuming \eqref{eqn:condF} holds, set $\beta = \alpha (\sigma + \rho)/2$ and observe that $0 < \beta < 1$. Observe also that
\begin{equation*}
Q = \frac{1}{\beta}\,\bigl(V_{\alpha} - (1- \beta) I \bigr)= I - c\,\nabla g,
\end{equation*}
where $c = \dfrac{2}{(1-\alpha\,\rho) (\sigma + \rho)}$. We now have, by \eqref{ineq:g},
\begin{multline*}
\|Q(x) - Q(y)\|_2^2 \leq \|x - y \|_2^2 \\  + \left(c^2 - \frac{2c}{\sigma - \rho} \right) \|\nabla g(x) - \nabla g(y)\|_2^2.
\end{multline*}
It can be checked that \eqref{eqn:condF} implies
\begin{equation*}
\left(c^2 - \frac{2c}{\sigma - \rho} \right) < 0.
\end{equation*}
Therefore $Q$\, is non-expansive and by Lemma~\ref{lem:avg}, $V_{\alpha}$ is $\beta$-averaged.
\end{proof}
\end{prop}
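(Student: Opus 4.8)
The plan is to verify averagedness through Lemma~\ref{lem:avg}: I will exhibit a $\beta \in (0,1)$ for which the operator $Q = \beta^{-1}\bigl(V_\alpha - (1-\beta)I\bigr)$ is non-expansive. The first and decisive move is to peel off the convex part of $f$. Setting $g(x) = f(x) - \tfrac{\rho}{2}\|x\|_2^2$, the $\rho$-strong convexity of $f$ makes $g$ convex, and Lemma~\ref{lem:g} gives that $\nabla g = \nabla f - \rho I$ is $(\sigma-\rho)$-Lipschitz. This substitution is not cosmetic: writing $\nabla f = \rho I + \nabla g$, the awkward prefactor $\tfrac{1}{1-\alpha\rho}$ in $V_\alpha$ collapses, and $V_\alpha$ rewrites as an honest forward (gradient-descent) step $V_\alpha = I - \tfrac{\alpha}{1-\alpha\rho}\,\nabla g$ on the convex function $g$, now with identity coefficient exactly one.

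With $V_\alpha$ in this form, $Q$ likewise takes the shape $Q = I - c\,\nabla g$ for a constant $c$ depending on $\beta$. I would take $\beta = \alpha(\sigma+\rho)/2$, which lies in $(0,1)$ precisely because of the hypothesis~\eqref{eqn:condF}, and which yields the clean value $c = 2\big/\bigl[(1-\alpha\rho)(\sigma+\rho)\bigr]$. Expanding $\|Q(x)-Q(y)\|_2^2$ as $\|x-y\|_2^2$ minus a cross term plus $c^2\,\|\nabla g(x)-\nabla g(y)\|_2^2$, and controlling the cross term by the Baillon--Haddad cocoercivity bound $\langle \nabla g(x)-\nabla g(y),\,x-y\rangle \geq (\sigma-\rho)^{-1}\|\nabla g(x)-\nabla g(y)\|_2^2$ for the convex, $(\sigma-\rho)$-Lipschitz map $\nabla g$, leaves the residual coefficient $c^2 - 2c/(\sigma-\rho)$ multiplying $\|\nabla g(x)-\nabla g(y)\|_2^2$.

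The remaining work is purely algebraic: I must check $c^2 - 2c/(\sigma-\rho)\le 0$, i.e. $c \le 2/(\sigma-\rho)$ since $c>0$. For the chosen $c$ this unwinds to $(1-\alpha\rho)(\sigma+\rho) \geq \sigma-\rho$, which after cancellation is exactly the bound $\alpha(\sigma+\rho)\le 2$ guaranteed by~\eqref{eqn:condF}; then $Q$ is non-expansive and Lemma~\ref{lem:avg} concludes that $V_\alpha$ is $\beta$-averaged. I expect the genuinely nontrivial point to be the reduction to the convex $g$ rather than this final inequality: applying Baillon--Haddad directly to $f$ gives only the cocoercivity constant $1/\sigma$ and the weaker range $\alpha<2/\sigma$, so it is exactly the shift by $\tfrac{\rho}{2}\|x\|_2^2$ --- which simultaneously restores the unit identity coefficient in $V_\alpha$ and sharpens the Lipschitz constant to $\sigma-\rho$ --- that produces the tight step-size bound $2/(\sigma+\rho)$.
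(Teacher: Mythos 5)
Your proposal is correct and follows essentially the same route as the paper's proof: the same shift $g = f - \tfrac{\rho}{2}\|\cdot\|_2^2$, the same choice $\beta = \alpha(\sigma+\rho)/2$ and $c = 2/[(1-\alpha\rho)(\sigma+\rho)]$, the same application of Baillon--Haddad to the $(\sigma-\rho)$-Lipschitz gradient of $g$, and the same final algebraic reduction to $\alpha(\sigma+\rho)\le 2$. Your closing observation about why the shift (rather than applying Baillon--Haddad directly to $f$) is what yields the sharper step-size bound is a nice articulation of the point, and your use of $\le 0$ for the residual coefficient is in fact slightly more careful than the paper's strict inequality, which fails (harmlessly) when $\rho=0$.
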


We now present the proof of Prop.~\ref{prop:FBgeneralNC}. The argument is similar to that in the proof of Prop.~\ref{prop:FB2}.
\begin{proof}[Proof of Prop.~\ref{prop:FBgeneralNC}]
Let us define the operator $A$ through
\begin{equation*}
A\, x = T_{\alpha} \Bigl( x - \alpha\,\nabla f(x) \Bigr),
\end{equation*}
Then, for $\alpha\,\rho < 1$, $A$ can also be written as $A = S_{\alpha}\,V_{\alpha}$
 where $S_{\alpha}$ and $V_{\alpha}$ are as defined in Prop.~\ref{prop:fn} and Prop.~\ref{prop:gradient}. But if $\alpha < 2/(\sigma + \rho)$, $S_{\alpha}$ and $V_{\alpha}$ are averaged by Prop.~\ref{prop:fn} and Prop.~\ref{prop:gradient}. Then, by Prop.~\ref{prop:comp}, we conclude that $A$ is also averaged. We also have by Prop.~\ref{prop:fixedGeneral} that the fixed points of $A$ comprise the set of minimizers of the convex function $D(x)$ which is non-empty by assumption. The claim now follows by Prop.~\ref{prop:AvgConv}.
\end{proof}

We remark that the convergence proof for ISTA did not use and does not directly imply that the cost decreases monotonically with each iteration (as stated in Prop.~\ref{prop:FBgeneralDescent}). However, such a descent property has been shown to hold in \cite{she09p384,kow14ICASSP} previously. We include a proof of this descent property (i.e., Prop.~\ref{prop:FBgeneralDescent}) for the sake of completeness. The proof depends on two lemmas.
\begin{lem}\label{lem1}
The sequence constructed in \eqref{eqn:ISTAgen} satisfies,
\begin{multline*}
P(x^{k+1}) -  P(x^k) + \langle \nabla f (x^k),  x^{k+1} - x^k \rangle \\ \leq  \left( \frac{\rho}{2} -  \frac{1}{\alpha}\right) \|x^k - x^{k+1}\|_2^2.
\end{multline*}
\begin{proof}
This claim follows from Lemma~\ref{lem:aux} by plugging $x^k$\, for $x$, $x^{k+1}$ for $\hat{x}$ and ${x^k - \alpha \nabla f(x^k)}$ for $z$ (since ${x^{k+1} = T_{\alpha}(x^k - \alpha\,\nabla f(x^k))}$ ).
\end{proof}
\end{lem}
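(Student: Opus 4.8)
The plan is to obtain the claim as an immediate consequence of the auxiliary inequality established in Lemma~\ref{lem:aux}, which already packages together the weak convexity of $P$ and the optimality condition characterizing $T_{\alpha}$. Since the iterate $x^{k+1}$ is by construction $T_{\alpha}(z)$ with $z = x^k - \alpha\,\nabla f(x^k)$, the hypothesis of Lemma~\ref{lem:aux} is met exactly, and the only work left is to evaluate that inequality at the right points and simplify.

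First I would apply Lemma~\ref{lem:aux} with the substitution $\hat{x} = x^{k+1}$, $z = x^k - \alpha\,\nabla f(x^k)$, and $x = x^k$. This is legitimate precisely because $x^{k+1} = T_{\alpha}\bigl(x^k - \alpha\,\nabla f(x^k)\bigr)$ by \eqref{eqn:ISTAgen}, so that $\hat{x} = T_{\alpha}(z)$ holds as required. The lemma then yields
\[
P(x^k) - P(x^{k+1}) \geq -\left\{ \frac{\rho}{2}\,\|x^k - x^{k+1}\|_2^2 + \frac{1}{\alpha}\,\langle x^{k+1} - z,\, x^k - x^{k+1}\rangle \right\}.
\]

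The remaining step is a short computation: I would expand the inner product using $x^{k+1} - z = (x^{k+1} - x^k) + \alpha\,\nabla f(x^k)$. Writing $x^k - x^{k+1} = -(x^{k+1}-x^k)$, the bracket separates into a multiple of $\|x^k - x^{k+1}\|_2^2$ and a term proportional to $\langle \nabla f(x^k),\, x^{k+1}-x^k\rangle$. Collecting the quadratic coefficient as $\bigl(\tfrac{1}{\alpha} - \tfrac{\rho}{2}\bigr)$, negating the inequality, and transferring the gradient inner product to the left-hand side produces exactly the stated bound.

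There is no genuine obstacle here, since the weak-convexity content has already been absorbed into Lemma~\ref{lem:aux}; what remains is essentially bookkeeping. The one place demanding care is the sign tracking when substituting $z = x^k - \alpha\,\nabla f(x^k)$: a slip in expanding $\langle x^{k+1} - z,\, x^k - x^{k+1}\rangle$ would flip the sign of the gradient term. Consistently writing the increment as $x^{k+1}-x^k$ throughout avoids this pitfall.
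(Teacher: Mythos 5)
Your proposal is correct and matches the paper's proof exactly: both apply Lemma~\ref{lem:aux} with $x = x^k$, $\hat{x} = x^{k+1}$, and $z = x^k - \alpha\,\nabla f(x^k)$, and the rest is the sign-tracking bookkeeping you describe. The paper leaves that expansion implicit, but your computation carries it out correctly.
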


\begin{lem}\label{lem2}
Suppose $h:\mathbb{R}^n \to \mathbb{R}$ is a continuous differentiable function with a $\tau$-Lipschitz continuous gradient.
Then,
\begin{equation*}
h(x) \leq h(y) + \langle \nabla h(y), (x-y) \rangle + \frac{\tau}{2} \,\|x -y \|_2^2.
\end{equation*}
\end{lem}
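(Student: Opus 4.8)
The plan is to prove this standard descent lemma by reducing the multivariate statement to a one-dimensional integral along the line segment joining $y$ and $x$, and then controlling the resulting integrand with the Lipschitz hypothesis on $\nabla h$.

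First I would introduce the auxiliary scalar function $\phi(t) = h\bigl(y + t(x-y)\bigr)$ for $t \in [0,1]$. Since $h$ is continuously differentiable, $\phi$ is differentiable with $\phi'(t) = \langle \nabla h(y + t(x-y)),\, x - y\rangle$, and by the fundamental theorem of calculus I can write $h(x) - h(y) = \phi(1) - \phi(0) = \int_0^1 \langle \nabla h(y + t(x-y)),\, x-y\rangle \, dt$. Next I would subtract the linear term $\langle \nabla h(y),\, x - y\rangle$ from both sides; writing this term as $\int_0^1 \langle \nabla h(y),\, x - y\rangle \, dt$, the two integrals combine into $h(x) - h(y) - \langle \nabla h(y),\, x-y\rangle = \int_0^1 \langle \nabla h(y + t(x-y)) - \nabla h(y),\, x - y\rangle \, dt$.

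I would then bound the integrand by the Cauchy-Schwarz inequality followed by the $\tau$-Lipschitz bound on $\nabla h$, obtaining $\langle \nabla h(y + t(x-y)) - \nabla h(y),\, x-y\rangle \leq \|\nabla h(y + t(x-y)) - \nabla h(y)\|_2\,\|x-y\|_2 \leq \tau\,t\,\|x-y\|_2^2$. Integrating $\tau t$ over $[0,1]$ produces the factor $\tau/2$, so that $h(x) - h(y) - \langle \nabla h(y),\, x-y\rangle \leq \frac{\tau}{2}\,\|x-y\|_2^2$, which is exactly the claim.

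The argument is essentially mechanical, so I do not expect a serious obstacle; the only point requiring mild care is the passage to the integral representation, which relies on $h$ being genuinely $C^1$ so that $t \mapsto \nabla h(y + t(x-y))$ is continuous and the integrand is well defined and integrable on $[0,1]$. Beyond that, the proof is a routine combination of the fundamental theorem of calculus, the Cauchy-Schwarz inequality, and the Lipschitz estimate.
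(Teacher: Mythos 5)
Your proof is correct: the reduction to the scalar function $\phi(t)=h(y+t(x-y))$, the fundamental theorem of calculus, and the Cauchy--Schwarz plus Lipschitz bound on the integrand give exactly the stated inequality, and the integration of $\tau t$ over $[0,1]$ correctly produces the factor $\tau/2$. The paper does not prove this lemma at all --- it simply cites 3.2.12 of Ortega and Rheinboldt --- and your argument is the standard proof that the citation points to, so there is nothing to compare beyond noting that you have supplied the details the paper omits.
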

For a proof of this lemma, see 3.2.12 in \cite{Ortega}.

We remark  that for the data fidelity term $f$, the Lipschitz constant is $\sigma$.

\begin{proof}[Proof of Prop.~\ref{prop:FBgeneralDescent}]
By Lemma~\ref{lem2}, we have,
\begin{equation*}
f(x^{k+1}) \leq f(x^k) + \langle \nabla f(x^k), x^{k+1}-x^k \rangle + \frac{\sigma}{2} \,\|x^{k+1} -x^k \|_2^2.
\end{equation*}
Adding $P(x^{k+1})$ to both sides, we can rewrite this as,
\begin{align*}
D(x^{k+1}) \leq & D(x^k) +  \Bigl\{ P(x^{k+1}) - P(x^k) \Bigr. \\  + & \Bigl. \langle \nabla f(x^k), x^{k+1}-x^k \rangle + \frac{\sigma}{2} \,\|x^{k+1} -x^k \|_2^2  \Bigr\} 
\end{align*}
Let us denote the term in curly brackets with $c$. If we can show that $c$ is non-positive, we are done. By Lemma~\ref{lem1}, we have,
\begin{equation*}
c \leq \left( \frac{\rho}{2} - \frac{1}{\alpha} + \frac{\sigma}{2} \right)\, \|x^{k+1} -x^k \|_2^2.
\end{equation*}
But since $\alpha < 2/ (\sigma + \rho)$, the term in the parentheses is negative and thus $c < 0$.
\end{proof}

\section{Experiments}\label{sec:experiment}
In this section, we present two different experiments to evaluate the acceleration achieved by increasing the step size. We also compare the speed of convergence with those achieved by acceleration methods like  TwIST \cite{bio07p992} and FISTA \cite{bec09p183}. We note however, that TwIST and FISTA are proposed for when convex penalties are used in the cost function. Therefore, their convergence analyses are not valid in the current setup. Nevertheless, we have observed that they lead to considerable acceleration and included them to give an idea about what can be gained by increasing the step size. We also note that our purpose in these experiments is not to promote weakly convex penalties but to demonstrate the effects of increasing the step size in ISTA. Matlab code for the experiments is available at ``\texttt{\small http://web.itu.edu.tr/ibayram/NCISTA/}''.

\begin{experiment}\label{exp:sparse}
Our first experiment involves a sparse signal recovery problem.
We use a $60\times 50$ convolution matrix $H$  associated with an invertible filter (so that $H^T\,H$ is invertible) to construct the observed signal. Using a sparse $x$ as shown in Fig.\ref{fig:demo1}a, we produce the observed signal as 
\begin{equation*}
y = H\,x + u,
\end{equation*}
where $u$ denotes white Gaussian noise. We use a  penalty function $P:\mathbb{R} \rightarrow \mathbb{R}$ defined as
\begin{equation*}
P_{\tau,\rho}(s) = \begin{cases}
\tau\,|s| - s^2/(2\rho), &\text{ if } |s| < \tau/\rho,\\
\tau^2/(2\rho), &\text{ if } |s| \geq \tau/\rho,\\
\end{cases}
\end{equation*}
This function is shown in  Fig.~\ref{fig:DemoTHold}a.
This penalty function is $\rho$-weakly convex. The threshold function associated with $P$ (see \eqref{eqn:T}) is known as a firm-threshold \cite{gao97p855} and is given by (provided $\alpha\,\rho < 1$),
\begin{equation*}
T_{\alpha}(s) = \begin{cases}
0, &\text{if } |s| < \alpha\,\tau,\\
(1-\alpha\,\rho)^{-1}\,(s - \alpha\,\tau), &\text{if } \alpha\,\tau \leq |s| < \tau/\rho,\\
s, &\text{if } \tau/\rho \leq |s|.
\end{cases}
\end{equation*}
This threshold function is depicted in Fig.~\ref{fig:DemoTHold}b. Observe that for $\alpha \,\tau < |s| < \tau/\rho$, the derivative of the firm-threshold function exceeds unity. Therefore the firm-threshold function is not non-expansive.

\begin{figure}
\centering
\centering
\includegraphics[scale=1]{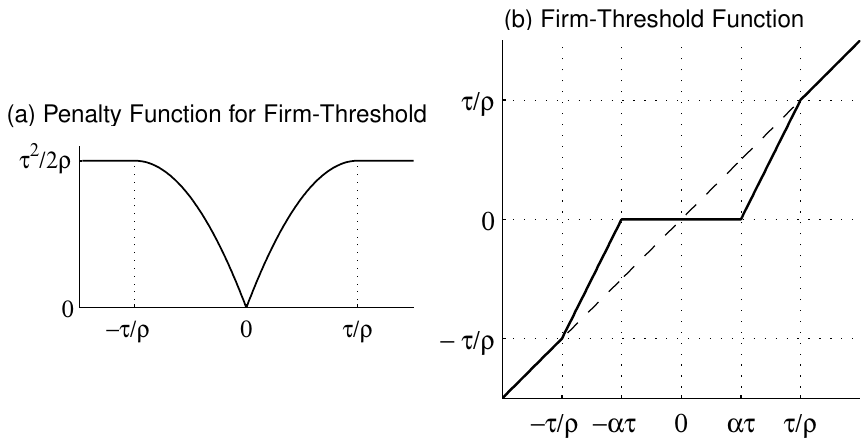}
\caption{The penalty and threshold function used in Exp.~\ref{exp:sparse}. Observe that this threshold function is not non-expansive. Specifically, the derivative exceeds unity on the intervals $(-\tau/\rho, -\alpha\rho)$ and $(\alpha\rho, \tau/\rho)$ \label{fig:DemoTHold}}
\end{figure}

In the setup described above, we set $\rho$ as the least eigenvalue of $H^T\,H$, which is the maximum value allowed if a convex cost is desired. We also set $\tau = 3\,\rho \,\std(u)$, where $\std(u)$ denotes the standard deviation of noise. We obtain the estimate of the sparse signal as,
\begin{equation}\label{eqn:ExpCost}
x^* = \arg \min_t \frac{1}{2} \|y - H\,t\|_2^2 + \sum_i P_{\tau,\rho}(t_i).
\end{equation}
Note that $P$ applies componentwise to an input vector $t$. The threshold function $T_{\alpha}$ is also applied componentwise in the realization of ISTA.

Denoting the greatest eigenvalue of $H^T\,H$ by $\sigma$, we set $\alpha_0 = 1/\sigma$ and $\alpha_1 = 2/(\sigma+\rho)$. Note that $\alpha_0$ is the greatest value of the stepsize allowed by MM (see Prop.~\ref{prop:MMIntro}), whereas for $\alpha_1$, convergence is guaranteed by Prop.~\ref{prop:FB2}. For this specific problem, the ratio $\alpha_1/\alpha_0$ was found to be 1.88. 

\begin{figure}
\centering
\includegraphics[scale=1]{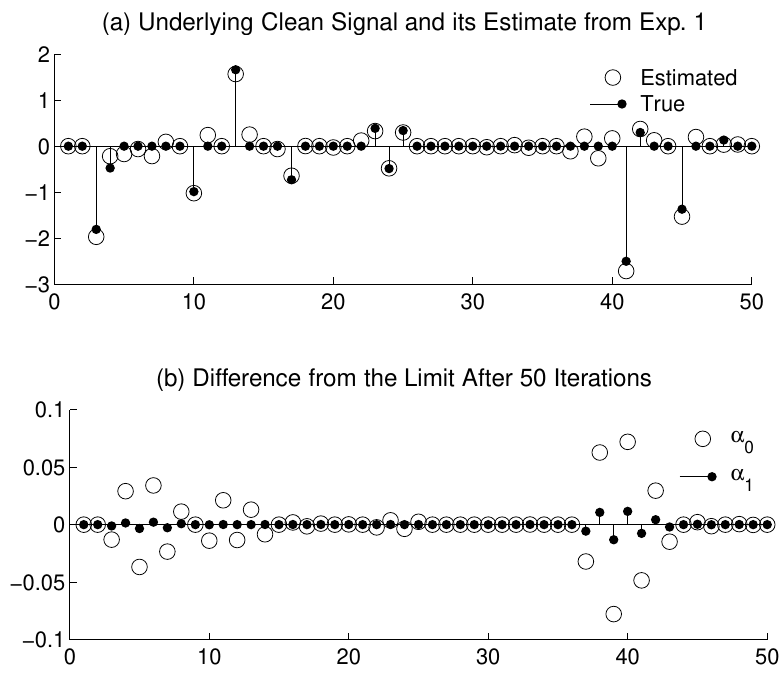}
\caption{(a) The true signal and its estimate obtained by solving \eqref{eqn:ExpCost}. (b) The difference from the minimizer after 50 iterations of ISTA with stepsizes $\alpha_0 < \alpha_1$. \label{fig:demo1}}
\end{figure}

\begin{figure}[h!]
\centering
\includegraphics[scale=1]{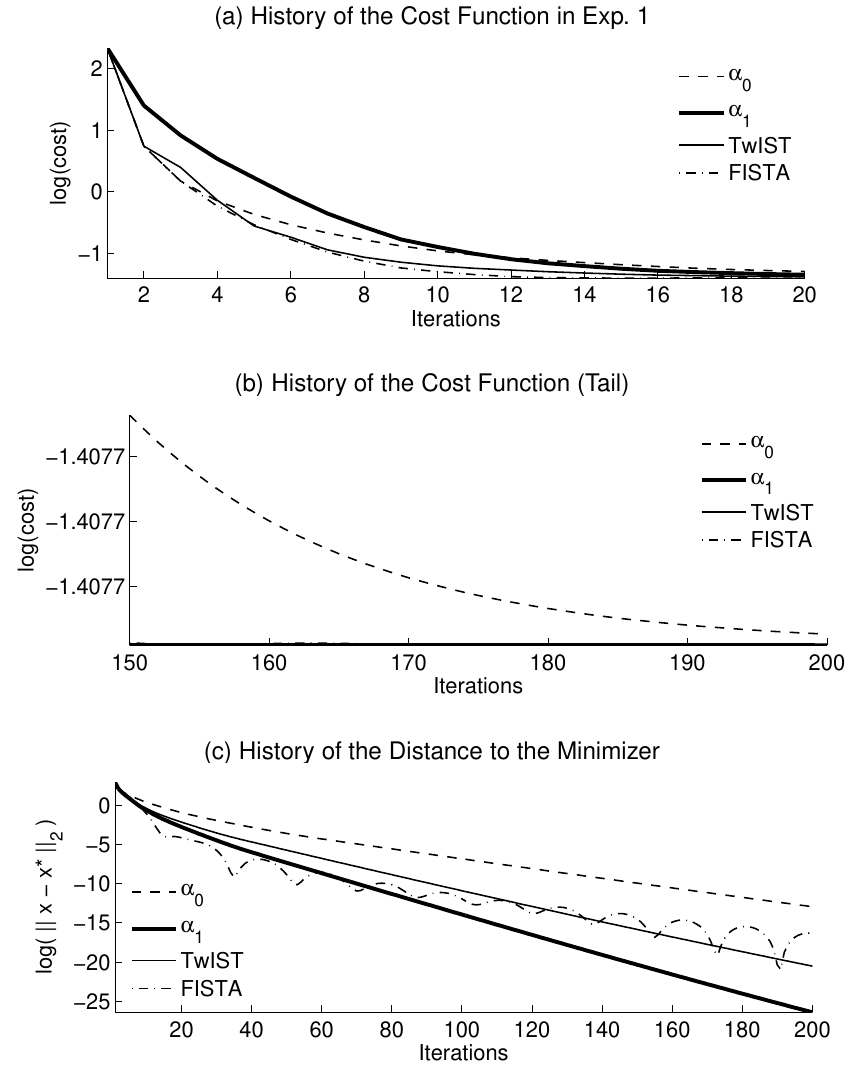}
\caption{ Effects of changing the step size $\alpha$ in ISTA for Exp.~\ref{exp:sparse}.  (a,b) show how the cost function evolves with iterations using the greatest possible $\alpha_0$ that guarantees strict descent and the greatest possible $\alpha_1$ that merely guarantees convergence. The cost with TwIST and FISTA are also included.  (c) shows the distance to the unique minimizer with respect to iterations for the same four choices.  \label{fig:demo1,2}}
\end{figure}

We ran ISTA with $\alpha_0$ and $\alpha_1$, in both cases, starting from zero. In order to better evaluate the convergence speed, we also tried FISTA \cite{bec09p183} and TwIST \cite{bio07p992}. We note that both methods require that the components of the cost function be convex and thus they were not originally proposed for the current setup. Nevertheless, both methods are formally applicable. We note that at least in the convex case, FISTA requires the step-size to be less than or equal to $1/\sigma$. Therefore for FISTA, we employed the stepsize $\alpha_0 = 1/\sigma$ (we also tried it for $\alpha_1$, but the sequence diverged). Given the step-size, we applied the algorithm referred to as `FISTA with constant stepsize' in Sec.4 of \cite{bec09p183}. For TwIST, there are two parameters to choose, namely `$\alpha$' and `$\beta$' -- for these, we used the suggestions in equations (26) and (27) of \cite{bio07p992}.

The history of the cost function with iterations is shown in Fig.~\ref{fig:demo1,2}~a,b. With $\alpha_0$, initial descent is greater compared to that achieved by $\alpha_1$ but eventually the cost achieved by $\alpha_1$ drops below that achieved by $\alpha_0$. FISTA and TwIST perform quite well in the beginning, dropping the cost faster than ISTA with $\alpha_0$ or $\alpha_1$. However, after about 30 iterations, FISTA, TwIST and ISTA (with $\alpha_1$) achieve almost the same  cost values. 
We found these observations to be quite stable with respect to different noise realizations.

In order to assess the convergence speed, using $\alpha_0$ we ran ISTA for 10K iterations to obtain an estimate of the minimizer $x^*$. Then, we reran the algorithms mentioned above. The logarithm of the Euclidean distance to $x^*$ with respect to iterations is shown in Fig.~\ref{fig:demo1,2}c. We see that the the distance to the minimizer decreases faster when larger steps are used in ISTA. TwIST converges with a rate that lies in between those of ISTA with stepsizes $\alpha_0$ and $\alpha_1$. The behavior of FISTA is less stable. We see bursts that take the iterate close to the limit followed by slight departures. Especially in the first few iterations, the distance to the minimizer is greatly reduced by FISTA but in the long run, convergence rate is slightly worse than that of TwIST, on average. 
To conclude, although TwIST and FISTA are very successful in reducing the cost rapidly, this does not necessarily lead to the fastest rate of convergence when we monitor the distance to the minimizer. 

\end{experiment}

\begin{experiment}\label{exp:discrete}
In this experiment, we consider a clean signal $x(n)$ composed of piecewise constant blocks of three samples. That is, the signal satisfies
\begin{equation}\label{eqn:mod}
x(3n) = x(3n+1) = x(3n+2),
\end{equation}
for every integer $n$. We further know that $x(n)$ takes integer values in the range $[0,4]$. An example is shown in Fig.~\ref{fig:QuantaObs}a (stem plot). We observe a blurred and noisy version of this signal, namely $y$. The least squares reconstruction of  $x$ given the observed $y$ is also shown in Fig.~\ref{fig:QuantaObs}a (circles). In order to take \eqref{eqn:mod} into account, we propose to synthesize $x$ as a weighted linear combination of atoms $\{g(n - 3\,k) \}_{k\in \mathbb{Z}}$, where
$g(n) = \delta(n) + \delta(n-1) + \delta(n-2)$. That is, we let our estimate $\hat{x}$ be given as,
\begin{equation*}
\hat{x} = \sum_k \hat{c}(k)\,g(n-3k),
\end{equation*}
where $\hat{c}$ denotes the coefficients to be estimated. Let us denote the operator that takes $c$ to $\hat{x}$ as $G$. Also, let $F$ denote the blurring operator. In this setting, we wish to estimate $\hat{c}$ as
\begin{equation}\label{eqn:exp2Formulation}
\hat{c} = \arg \min_c \frac{1}{2} \| y - H\,c\|_2^2 + \tau\,\sum_k P\bigl( c(k) \bigr), 
\end{equation}
where $H = F\,G$ and $P:\mathbb{R}\to\mathbb{R}$ is a penalty function that applies to the components of $c$. We expect $P$ to penalize deviations from the integers in the range $[0,4]$. Notice that this is inherently a non-convex constraint. No useful convex penalty function exists for enforcing such a constraint. However, viable weakly-convex penalties can be found for this task (see also \cite{nik98ICIP} in this context). We propose to use a penalty function given as
\begin{equation*}
P(s) = \begin{cases}
\infty, &\text{if }s < 0,\\
(s - \lfloor s \rfloor)\,(\lceil s \rceil - s), & \text{if } 0 \leq s \leq 4,\\
\infty, &\text{if }4 < s,
\end{cases}
\end{equation*}
where $\lfloor\cdot \rfloor$ and $\lceil \cdot \rceil$ denote the floor and ceiling functions respectively.
This penalty function is 2-weakly convex and is shown in Fig.~\ref{fig:QuantaPenalty}a.
The associated threshold function $T_{\alpha}(s)$ for $\alpha < 1/2$ is given as 
\begin{equation*}
\begin{cases}
0, &\text{if } s < 0,\\
\lfloor s \rfloor, &\text{if } 0\leq  s   \leq \lfloor s \rfloor + \alpha \leq 4 + \alpha ,\\
\lfloor s \rfloor + \frac{s - \lfloor s \rfloor - \alpha}{1-2\alpha}, & \text{if }  0\leq \lfloor s \rfloor +\alpha \leq s \leq \lceil s \rceil -\alpha \leq 4,\\
\lceil s \rceil &\text{if }  0\leq \lceil s  \rceil - \alpha \leq s\leq 4,\\
4, &\text{if }  4 < s.
\end{cases}
\end{equation*}
The threshold function is shown in Fig.~\ref{fig:QuantaPenalty}b. Observe that the threshold function allows non-integer values but favors integers (it has integer valued deadzones). Note also that this threshold function applies componentwise.

\begin{figure}
\centering
\includegraphics[scale=1]{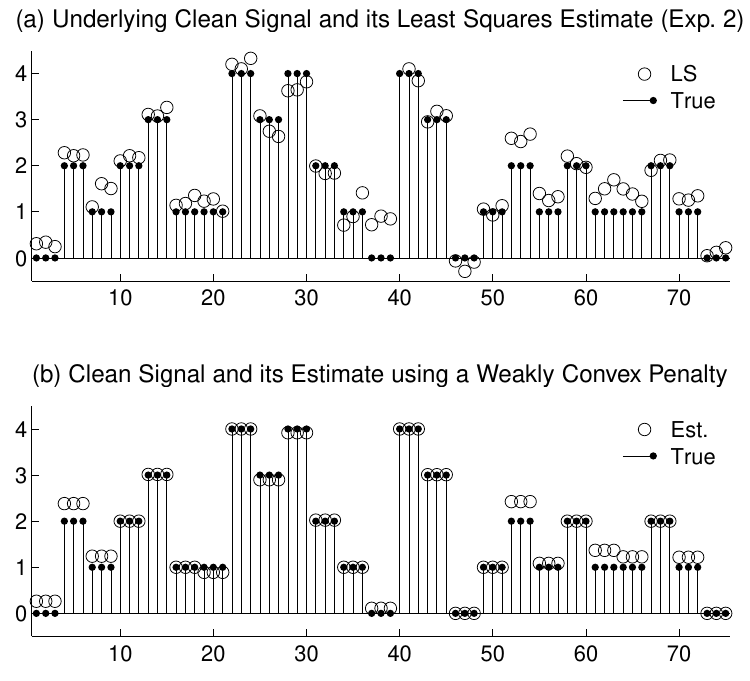}
\caption{(a) The true signal and its least squares estimate for Exp.~\eqref{exp:discrete}. (b) The clean signal and its estimate obtained by solving \eqref{eqn:exp2Formulation}. \label{fig:QuantaObs}}
\end{figure}

\begin{figure}
\centering
\includegraphics[scale=1]{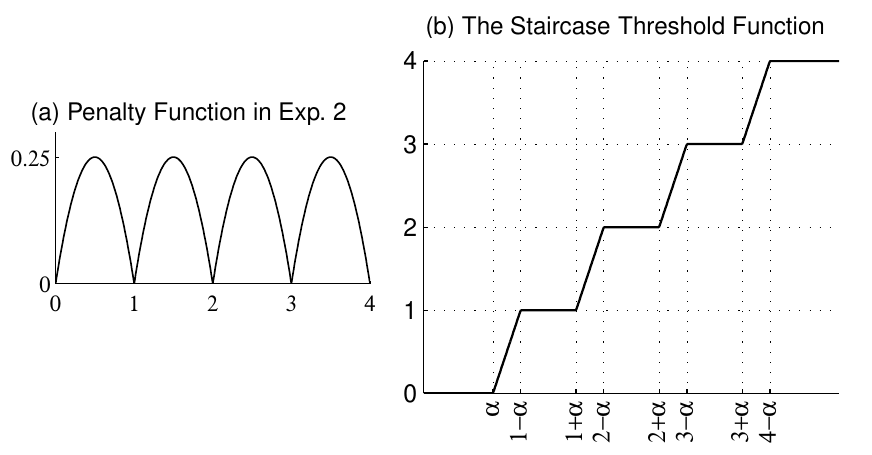}
\caption{(a) The penalty function used in Exp.~\ref{exp:discrete} to promote integer values in the range $[0,4]$. (b) The threshold function $T_{\alpha}$ per \eqref{eqn:T} associated with the function in (a). \label{fig:QuantaPenalty}}
\end{figure}

In this setup, we ran ISTA with the four choices described in Experiment~\ref{exp:sparse}, namely ISTA with $\alpha_0$ (largest step-size allowed by the MM framework), ISTA with $\alpha_1$ (largest step-size for which convergence is ensured), TwIST and FISTA. We chose the parameters for TwIST and FISTA similarly as in  Exp.~\ref{exp:sparse}. The history of the cost function and the distance to the limit is shown in Fig.~\ref{fig:demo2}a,b respectively. The figures are in agreement with those in Exp.~\ref{exp:sparse}. Increased step sizes lead to acceleration that is about the same order (in fact, better in this example) as that achieved by TwIST and FISTA. We again note, however that we do not have a theoretical explanation for this behavior (whereas FISTA is theoretically optimal in terms of reducing the cost for convex penalties).

\begin{figure}
\centering
\includegraphics[scale=1]{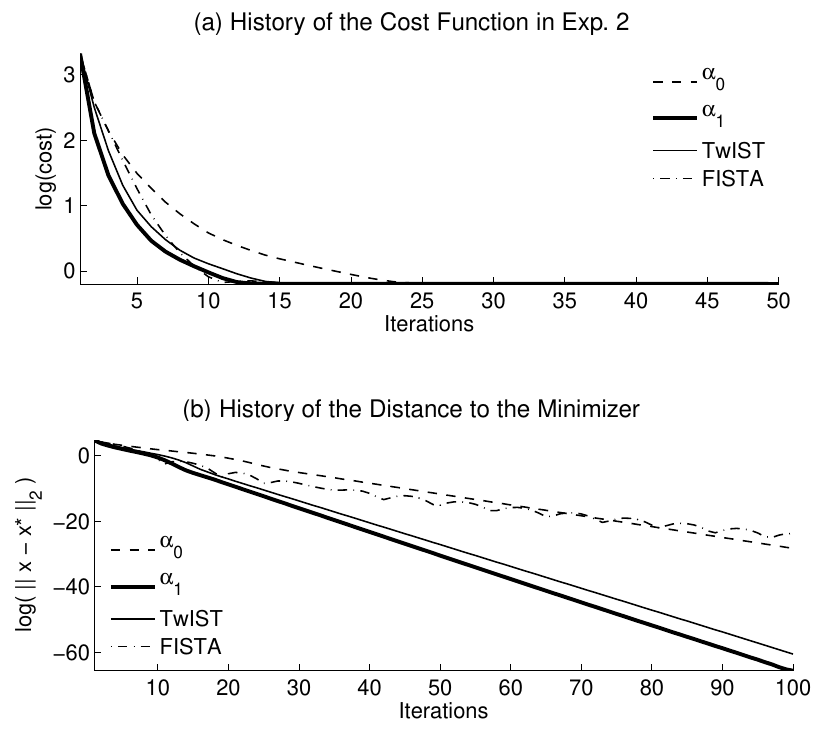}
\caption{ (a) shows the history of the cost function for Exp.~\ref{exp:discrete}. The same choices as in Exp.~\ref{exp:sparse} are used for comparison. (b) shows the history of the distance to the unique minimizer.  \label{fig:demo2}}
\end{figure}

\end{experiment}

\section{Conclusion}\label{sec:conc}

In this paper, we studied the convergence of ISTA when the penalty term is weakly convex and provided a generalized convergence condition. We  also demonstrated that  larger step-sizes lead to faster convergence, although we do not have a precise theoretical justification at the moment. The generalization in this paper relies on a study of the proximity operator for a weakly convex function. Specifically, we have seen that the proximity operator for a weakly convex function is no longer (firmly) non-expansive and/or averaged. However, under proper scaling, the proximity operators regain these properties. Given this observation, it is also natural to consider extensions of this work to other algorithms that employ proximity operators, specifically the Douglas-Rachford algorithm \cite{eck92p293}, which can in turn be used to derive other algorithms such as the alternative direction method of multipliers (ADMM) \cite{eck92p293,boy11p1}, or the parallel proximal algorithm (PPXA) \cite{pus11p450}. We hope to investigate these extensions in future work.

\subsection*{Acknowledgements}
The author thanks the reviewers for their comments and suggestions that helped improve the paper.

\bibliographystyle{IEEEtranS}

\begin{thebibliography}{10}

\bibitem{ant07p16}
A.~Antoniadis.
\newblock Wavelet methods in statistics : {S}ome recent developments and their
  applications.
\newblock {\em Statistics Surveys}, 1:16--55, 2007.

\bibitem{bau10p781}
H.~Bauschke and P.~Combettes.
\newblock The {B}aillon-{H}addad theorem revisited.
\newblock {\em Journal of Convex Analysis}, 17:781--787, 2010.

\bibitem{Bauschke}
H.~H. Bauschke and P.~L. Combettes.
\newblock {\em Convex analysis and monotone operator theory in {H}ilbert
  spaces}.
\newblock Springer, 2011.


\bibitem{bay15p265}
\.{I}. Bayram.
\newblock Penalty functions derived from monotone mappings.
\newblock {\em IEEE Signal Processing Letters}, 22(3):265--269, March 2015.

\bibitem{bay14ICASSP}
\.{I}. Bayram, P.-Y. Chen, and I.~W. Selesnick.
\newblock Fused lasso with a non-convex sparsity inducing penalty.
\newblock In {\em Proc. IEEE Int. Conf. on Acoustics, Speech and Signal Proc.
  (ICASSP)}, 2014.

\bibitem{Bazaraa}
M.~S. Bazaraa, H.~D. Sherali, and C.~M. Shetty.
\newblock {\em Nonlinear Programming: {T}heory and Algorithms}.
\newblock Wiley, $2^{\text{nd}}$ edition, 1993.

\bibitem{bec09p183}
A.~Beck and M.~Teboulle.
\newblock A fast iterative shrinkage-thresholding algorithm for linear inverse
  problems.
\newblock {\em SIAM Journal on Imaging Sciences}, 1:183--202, 2009.

\bibitem{bio07p992}
J.~M. Bioucas-Dias and M.~A.~T. Figueiredo.
\newblock A new {T}w{IST}: {T}wo-step iterative shrinkage/thresholding
  algorithms for image restoration.
\newblock {\em IEEE Trans. Image Proc.}, 16(12):2992--3003, December 2007.

\bibitem{GNC}
A.~Blake and A.~Zisserman.
\newblock {\em Visual Reconstruction}.
\newblock MIT Press, 1987.

\bibitem{boy11p1}
S.~Boyd, N.~Parikh, E.~Chu, B.~Peleato, and J.~Eckstein.
\newblock Distributed optimization and statistical learning via the alternating
  direction method of multipliers.
\newblock {\em Foundations and Trends in Machine Learning}, 3(1):1--122, 2011.

\bibitem{byr14p681}
C.~L. Byrne.
\newblock An elementary proof of convergence for the forward-backward splitting
  algorithm.
\newblock {\em Journal of Nonlinear and Convex Analysis}, 15(4):681--691, 2014.

\bibitem{ByrneItOpt}
C.~L. Byrne.
\newblock {\em Iterative Optimization in Inverse Problems}.
\newblock Taylor and Francis, 2014.

\bibitem{ByrneBaillon}
C.~L. Byrne.
\newblock On a generalized {B}aillon-{H}addad theorem for convex functions on
  {H}ilbert space, 2015.
\newblock To appear in Journal of Convex Analysis.

\bibitem{cha14ICASSP}
R.~Chartrand.
\newblock Shrinkage mappings and their induced penalty functions.
\newblock In {\em Proc. IEEE Int. Conf. on Acoustics, Speech and Signal Proc.
  (ICASSP)}, 2014.

\bibitem{Clarke}
F.~H. Clarke.
\newblock {\em Optimization and {N}onsmooth {A}nalysis}.
\newblock SIAM, 1990.

\bibitem{combettes_chp}
P.~L. Combettes and J.-C. Pesquet.
\newblock Proximal splitting methods in signal processing.
\newblock In H.~H. Bauschke, R.~S. Burachik, P.~L. Combettes, V.~Elser, D.~R.
  Luke, and H.~Wolkowicz, editors, {\em Fixed-Point Algorithms for Inverse
  Problems in Science and Engineering}. Springer, New York, 2011.

\bibitem{com05p168}
P.~L. Combettes and V.~R. Wajs.
\newblock Signal recovery by proximal forward-backward splitting.
\newblock {\em SIAM J. Multiscale Model. Simul.}, 4(4):1168--1200, November
  2005.

\bibitem{dau04p413}
I.~Daubechies, M.~Defrise, and C.~De Mol.
\newblock An iterative thresholding algorithm for linear inverse problems with
  a sparsity constraint.
\newblock {\em Commun. Pure Appl. Math.}, 57(11):1413--1457, August 2004.

\bibitem{eck92p293}
J.~Eckstein and D.~P. Bertsekas.
\newblock On the {D}ouglas-{R}achford splitting method and the proximal point
  algorithm for maximal monotone operators.
\newblock {\em Mathematical Programming}, 55(3):293--318, 1992.

\bibitem{fig07p980}
M.~A.~T. Figueiredo, J.~M. Bioucas-Dias, and R.~D. Nowak.
\newblock Majorization-minimization algorithms for wavelet-based image
  restoration.
\newblock {\em IEEE Trans. Image Proc.}, 16(12):2980--2991, December 2007.

\bibitem{fig03p906}
M.~A.~T. Figueiredo and R.~D. Nowak.
\newblock An {EM} algorithm for wavelet-based image restoration.
\newblock {\em IEEE Trans. Image Proc.}, 12(8):906--916, August 2003.

\bibitem{gao97p855}
H.-Y. Gao and A.~G. Bruce.
\newblock Waveshrink with firm shrinkage.
\newblock {\em Statistica Sinica}, 7(4):855--874, 1997.

\bibitem{HiriartFund}
J.-B. Hiriart-Urruty and C.~Lemar\'{e}chal.
\newblock {\em Fundamentals of Convex {A}nalysis}.
\newblock Springer, 2004.

\bibitem{hun04p30}
D.~R. Hunter and K.~Lange.
\newblock A tutorial on {MM} algorithms.
\newblock {\em Amer. Statist.}, 58(1):30--37, February 2004.

\bibitem{jac07p411}
M.~W. Jacobson and J.~A. Fessler.
\newblock An expanded theoretical treatment of iteration-dependent
  majorize-minimize algorithms.
\newblock {\em IEEE Trans. Image Proc.}, 16(10):2411--2422, October 2007.

\bibitem{kow14ICASSP}
M.~Kowalski.
\newblock Thresholding rules and iterative shrinkage/thresholding algorithm:
  {A} convergence study.
\newblock In {\em Proc. IEEE Int. Conf. on Acoustics, Speech and Signal Proc.
  (ICASSP)}, 2014.

\bibitem{Lange}
K.~Lange.
\newblock {\em Optimization}.
\newblock Springer, 2004.

\bibitem{mey76p108}
R.~R. Meyer.
\newblock Sufficient conditions for the convergence of monotonic
  mathematicalprogramming algorithms.
\newblock {\em J. of Computer and System Sci.}, 12(1):108--121, February 1976.

\bibitem{DeMol}
C.~De Mol and M.~Defrise.
\newblock A note on wavelet-based inversion methods.
\newblock In M.~Z. Nashed and O.~Scherzer, editors, {\em Inverse {P}roblems,
  {I}mage {A}nalysis and {M}edical {I}maging}. American Mathematical Society,
  2002.

\bibitem{nik98ICIP}
M.~Nikolova.
\newblock Estimation of binary images using convex criteria.
\newblock In {\em Proc. IEEE Int. Conf. on Image Processing (ICIP)}, 1998.

\bibitem{Ortega}
J.~M. Ortega and W.~C. Rheinboldt.
\newblock {\em Iterative {S}olution of {N}onlinear {E}quations in {S}everal
  {V}ariables}.
\newblock Academic Press, 1970.

\bibitem{pus11p450}
N.~Pustelnik, C.~Chaux, and J.-C. Pesquet.
\newblock Parallel proximal algorithm for image restoration using hybrid
  regularization.
\newblock {\em IEEE Trans. Image Proc.}, 20(9):2450--2462, September 2011.

\bibitem{Rockafellar}
R.~T. Rockafellar.
\newblock {\em Convex {A}nalysis}.
\newblock Princeton University Press, 1996.

\bibitem{RockafellarWets}
R.~T. Rockafellar and R.~J.-B. Wets.
\newblock {\em Variational {A}nalysis}.
\newblock Springer, 1998.

\bibitem{Rudin}
W.~Rudin.
\newblock {\em Principles of {M}athematical {A}nalysis}.
\newblock McGraw-Hill, $3^{\text{rd}}$ edition, 1976.

\bibitem{NonConvex}
I.~W. Selesnick and {\.I}.~Bayram.
\newblock Sparse signal estimation by maximally sparse convex optimization.
\newblock {\em IEEE Trans. Signal Processing}, 62(5):1078--1092, March 2014.

\bibitem{she09p384}
Y.~She.
\newblock Thresholding-based iterative selection procedures for model selection
  and shrinkage.
\newblock {\em Electronic Journal of Statistics}, 3:384--415, 2009.

\bibitem{via83p231}
J.-P. Vial.
\newblock Strong and weak convexity of sets and functions.
\newblock {\em Mathematics of Operations Research}, 8:231--259, May 1983.

\bibitem{vor13ICASSP}
S.~Voronin and R.~Chartrand.
\newblock A new generalized thresholding algorithm for inverse problems with
  sparsity constraints.
\newblock In {\em Proc. IEEE Int. Conf. on Acoustics, Speech and Signal Proc.
  (ICASSP)}, 2013.

\end{thebibliography}

\end{document}